\documentclass[11pt,letterpaper]{amsart}

\usepackage[T1]{fontenc}
\usepackage{lmodern}

\usepackage{amsmath, amsthm, amssymb, amsfonts, mathtools}
\usepackage{mathrsfs}
\usepackage[mathcal]{eucal}
\usepackage{bm}
\usepackage{bbm}
\usepackage{comment}

\usepackage{graphicx,float}
\usepackage{tikz-cd}
\usepackage[all]{xy}
\usepackage{wasysym}
\usepackage{dsfont}

\usepackage{longtable,array}
\usepackage{enumitem}

\usepackage{cite}

\usepackage{tikz}
\usetikzlibrary{tikzmark}

\usepackage{hyperref}
\hypersetup{
    colorlinks=true,
    linkcolor=blue,
    filecolor=blue,
    urlcolor=blue,
    citecolor=red,
}

\numberwithin{equation}{section}

\theoremstyle{definition}

\theoremstyle{remark}

\numberwithin{equation}{section}

\theoremstyle{plain}

\begin{document}

\title[Transcendence of twisted special $L$-values]{On the transcendence of twisted special $L$-values at non-negative integers in characteristic $p$}

\author{Jing Ye}
\address{Department of Mathematics, Texas A\&M University,
	College Station, Texas, 77843, United States}
\email{yej@tamu.edu}
\date{\today}
\keywords{Transcendence, Special $L$-values, Goss $L$-series, Drinfeld modules, Artin representations, Global function fields, Anderson motives}
\dedicatory{}

\commby{}

% ==============================================================================
% 1. THEOREM ENVIRONMENTS (定理环境配置)
% ==============================================================================
\theoremstyle{definition}
\newtheorem{Def}{Definition}[section]
\newtheorem{eg}[Def]{Example}
\newtheorem*{ex}{Exercise}
\newtheorem*{nota}{Notation}
\newtheorem{prob}[Def]{Problem}
\newtheorem{Q}[Def]{Question}
\newtheorem*{conv}{Convention}
\newtheorem{asp}[Def]{Assumption}

\theoremstyle{plain}
\newtheorem{thm}[Def]{Theorem}
\newtheorem{cor}[Def]{Corollary}
\newtheorem{prop}[Def]{Proposition}
\newtheorem{propdef}[Def]{Proposition-Definition}
\newtheorem{lem}[Def]{Lemma}
\newtheorem*{clm}{Claim}
\newtheorem{app}[Def]{Application}
\newtheorem{cjt}[Def]{Conjecture}
\newtheorem*{obs}{Observation}
\newtheorem{rev}{Review}

\theoremstyle{remark}
\newtheorem{rmk}[Def]{\normalfont\textit{Remark}}

\newenvironment{pf}{{\noindent\it Proof.}\quad}{\hfill $\qed$\par}
\renewcommand{\qedsymbol}{$\blacksquare$}

% ==============================================================================
% 2. STANDARD OPERATORS & FUNCTIONS (\operatorname 算子与函数)
% ==============================================================================
% --- Ring & Field Theory ---
\newcommand{\rad}[1]{\operatorname{rad}(#1)}
\newcommand{\supp}[1]{\operatorname{supp}(#1)}
\newcommand{\spec}[1]{\operatorname{Spec}(#1)}
\newcommand{\mspec}[1]{\operatorname{MaxSpec}(#1)}
\newcommand{\ann}[2]{\operatorname{ann}_{#1}(#2)}
\newcommand{\ass}[1]{\operatorname{Ass}(#1)}
\newcommand{\assb}[1]{\operatorname{Ass}\Big(#1\Big)}
\newcommand{\fra}[1]{\operatorname{Frac}(#1)}
\newcommand{\Frac}{\operatorname{Frac}}
\newcommand{\hgt}[1]{\textnormal{ht}(#1)}
\newcommand{\trdeg}{\textnormal{tr.deg}}

% --- Homology & Category ---
\newcommand{\im}{\operatorname{im}}
\newcommand{\img}{\operatorname{Im}}
\newcommand{\coim}{\operatorname{coim}}
\newcommand{\cok}{\operatorname{coker}}
\newcommand{\coker}{\mathrm{Coker}}
\newcommand{\image}{\mathrm{Im}}
\newcommand{\mor}[2]{\operatorname{Hom}({#1},{#2})}
\newcommand{\obj}[1]{\operatorname{obj}(#1)}
\newcommand{\homm}[3]{\operatorname{Hom}_{#1}({#2},{#3})}
\renewcommand{\hom}{\operatorname{Hom}}
\newcommand{\qhom}{\operatorname{Hom^\circ}}
\newcommand{\Ext}{\textnormal{Ext}}
\newcommand{\End}{\operatorname{End}}
\newcommand{\edo}{\mathrm{End}}
\newcommand{\en}[2]{\textnormal{End}_{#1}(#2)}
\newcommand{\aut}{\mathrm{Aut}}
\newcommand{\auto}[1]{\operatorname{Aut}(#1)}
\newcommand{\out}{\mathrm{Out}}
\newcommand{\Aut}{\mathcal{A}\mathrm{ut}}
\newcommand{\ob}{\operatorname{Ob}}
\newcommand{\ev}{\mathrm{ev}}
\newcommand{\coev}{\mathrm{coev}}
\newcommand{\funend}{\EuScript{E}\mathrm{nd}}
\newcommand{\rep}{\mathrm{Rep}}
\newcommand{\Rep}{\mathbf{Rep}}
\newcommand{\fun}{\mathrm{Fun}}
\newcommand{\Fun}{\EuScript{F}\mathrm{un}}
\newcommand{\colim}{\varinjlim}
\newcommand{\Lim}{\varprojlim}
\newcommand{\Lan}{\mathrm{Lan}}
\newcommand{\Ran} {\mathrm{Ran}}

% --- Linear Algebra & Matrices ---
\newcommand{\Tr}{\operatorname{Tr}}
\newcommand{\Lie}{\operatorname{Lie}}
\newcommand{\rk}{\operatorname{rank}}
\newcommand{\Fitt}{\operatorname{Fitt}}
\newcommand{\fitt}{\operatorname{Fitt}}
\newcommand{\Mat}{\operatorname{Mat}}
\newcommand{\SL}{\operatorname{SL}}
\newcommand{\GL}{\operatorname{GL}}
\newcommand{\AGL}{\mathbb{A}\GL}
\newcommand{\Span}{\operatorname{Span}}

% --- Arithmetic & Drinfeld/Anderson Modules ---
\newcommand{\Div}{\operatorname{Div}}
\newcommand{\Prin}{\operatorname{Prin}}
\newcommand{\Char}{\operatorname{char}}
\newcommand{\ord}{\operatorname{ord}}
\newcommand{\Reg}{\operatorname{Reg}}
\newcommand{\gal}{\operatorname{Gal}}
\newcommand{\Gal}{\operatorname{Gal}}
\newcommand{\Frob}{\operatorname{Frob}}
\newcommand{\Pic}{\operatorname{Pic}}
\newcommand{\Res}{\operatorname{Res}}
\newcommand{\Cl}{\operatorname{Cl}}
\newcommand{\inv}[1]{\operatorname{Inv}(#1)}
\newcommand{\disc}[1]{\textnormal{disc}({#1})}
\newcommand{\Ad}{\operatorname{Ad}}
\newcommand{\sgn}{\operatorname{sgn}}
\renewcommand{\div}{\operatorname{div}}
\newcommand{\der}{\operatorname{Der}}

% --- Miscellaneous Operator Modifiers ---
\newcommand{\spl}{\textnormal{spec}}
\newcommand{\sat}{\textnormal{sat}}
\newcommand{\ind}{\textnormal{ind}}
\newcommand{\Ind}{\operatorname{Ind}}
\newcommand{\op}{\operatorname{op}}
\newcommand{\eff}{\textnormal{eff}}
\newcommand{\rig}{\textnormal{rig}}
\newcommand{\st}{\textnormal{St}}
\newcommand{\codim}{\operatorname{codim}}
\newcommand{\cov}{\operatorname{Cov}}
\newcommand{\cat}{\operatorname{Cat}}
\newcommand{\Max}{\operatorname{Max}}
\renewcommand{\sp}{\operatorname{Sp}}
\newcommand{\gr}[2]{\textnormal{gr}_{#1}(#2)}
\newcommand{\grd}[3]{\textnormal{gr}_{#1}^{#2}(#3)}
\newcommand{\br}{\operatorname{Br}}
\newcommand{\ab}{\operatorname{ab}}
\newcommand{\tdiv}{\operatorname{\mid\!\mid}}
\newcommand{\id}{\mathrm{id}}

% ==============================================================================
% 3. ALPHABETIC MATH FONTS (分类补全的字母表)
% ==============================================================================

% --- BB Font (\mathbb) ---
\renewcommand{\AA}{\mathbb{A}}
\newcommand{\BB}{\mathbb{B}}
\newcommand{\CC}{\mathbb{C}}
\newcommand{\DD}{\mathbb{D}}
\newcommand{\EE}{\mathbb{E}}
\newcommand{\FF}{\mathbb{F}}
\newcommand{\GG}{\mathbb{G}}
\newcommand{\HH}{\mathbb{H}}
\newcommand{\II}{\mathbb{I}}
\newcommand{\JJ}{\mathbb{J}}
\newcommand{\KK}{\mathbb{K}}
\newcommand{\LL}{\mathbb{L}}
\newcommand{\MM}{\mathbb{M}}
\newcommand{\NN}{\mathbb{N}}
\newcommand{\OO}{\mathbb{O}}
\newcommand{\PP}{\mathbb{P}}
\newcommand{\QQ}{\mathbb{Q}}
\newcommand{\RR}{\mathbb{R}}
\renewcommand{\SS}{\mathbb{S}}
\newcommand{\TT}{\mathbb{T}}
\newcommand{\UU}{\mathbb{U}}
\newcommand{\VV}{\mathbb{V}}
\newcommand{\WW}{\mathbb{W}}
\newcommand{\XX}{\mathbb{X}}
\newcommand{\YY}{\mathbb{Y}}
\newcommand{\ZZ}{\mathbb{Z}}

% --- Cal Font (\mathcal) ---
\newcommand{\mcA}{\mathcal{A}}
\newcommand{\mcB}{\mathcal{B}}
\newcommand{\mcC}{\mathcal{C}}
\newcommand{\mcD}{\mathcal{D}}
\newcommand{\mcE}{\mathcal{E}}
\newcommand{\mcF}{\mathcal{F}}
\newcommand{\mcG}{\mathcal{G}}
\newcommand{\mcH}{\mathcal{H}}
\newcommand{\mcI}{\mathcal{I}}
\newcommand{\mcJ}{\mathcal{J}}
\newcommand{\mcK}{\mathcal{K}}
\newcommand{\mcL}{\mathcal{L}}
\newcommand{\mcM}{\mathcal{M}}
\newcommand{\mcN}{\mathcal{N}}
\newcommand{\mcO}{\mathcal{O}}
\newcommand{\mcP}{\mathcal{P}}
\newcommand{\mcQ}{\mathcal{Q}}
\newcommand{\mcR}{\mathcal{R}}
\newcommand{\mcS}{\mathcal{S}}
\newcommand{\mcT}{\mathcal{T}}
\newcommand{\mcU}{\mathcal{U}}
\newcommand{\mcV}{\mathcal{V}}
\newcommand{\mcW}{\mathcal{W}}
\newcommand{\mcX}{\mathcal{X}}
\newcommand{\mcY}{\mathcal{Y}}
\newcommand{\mcZ}{\mathcal{Z}}

% --- Script Font (\mathscr) ---
\newcommand{\sA}{\mathscr{A}}
\newcommand{\sB}{\mathscr{B}}
\newcommand{\sC}{\mathscr{C}}
\newcommand{\sD}{\mathscr{D}}
\newcommand{\sE}{\mathscr{E}}
\newcommand{\sF}{\mathscr{F}}
\newcommand{\sG}{\mathscr{G}}
\newcommand{\sH}{\mathscr{H}}
\newcommand{\sI}{\mathscr{I}}
\newcommand{\sJ}{\mathscr{J}}
\newcommand{\sK}{\mathscr{K}}
\newcommand{\sL}{\mathscr{L}}
\newcommand{\sM}{\mathscr{M}}
\newcommand{\sN}{\mathscr{N}}
\newcommand{\sO}{\mathscr{O}}
\newcommand{\sP}{\mathscr{P}}
\newcommand{\sQ}{\mathscr{Q}}
\newcommand{\sR}{\mathscr{R}}
\newcommand{\sS}{\mathscr{S}}
\newcommand{\sT}{\mathscr{T}}
\newcommand{\sU}{\mathscr{U}}
\newcommand{\sV}{\mathscr{V}}
\newcommand{\sW}{\mathscr{W}}
\newcommand{\sX}{\mathscr{X}}
\newcommand{\sY}{\mathscr{Y}}
\newcommand{\sZ}{\mathscr{Z}}

% --- EuScript Font (\EuScript) ---
\newcommand{\CA}{\EuScript{A}}
\newcommand{\CB}{\EuScript{B}}
\newcommand{\CCC}{\EuScript{C}}
\newcommand{\CDD}{\EuScript{D}}
\newcommand{\CE}{\EuScript{E}}
\newcommand{\CF}{\EuScript{F}}
\newcommand{\CG}{\EuScript{G}}
\newcommand{\CH}{\EuScript{H}}
\newcommand{\CI}{\EuScript{I}}
\newcommand{\CJ}{\EuScript{J}}
\newcommand{\CK}{\EuScript{K}}
\newcommand{\CL}{\EuScript{L}}
\newcommand{\CM}{\EuScript{M}}
\newcommand{\CN}{\EuScript{N}}
\newcommand{\CO}{\EuScript{O}}
\newcommand{\CP}{\EuScript{P}}
\newcommand{\CQ}{\EuScript{Q}}
\newcommand{\CR}{\EuScript{R}}
\newcommand{\CS}{\EuScript{S}}
\newcommand{\CT}{\EuScript{T}}
\newcommand{\CU}{\EuScript{U}}
\newcommand{\CV}{\EuScript{V}}
\newcommand{\CW}{\EuScript{W}}
\newcommand{\CX}{\EuScript{X}}
\newcommand{\CY}{\EuScript{Y}}
\newcommand{\CZ}{\EuScript{Z}}

% --- Fraktur Font (\mathfrak) ---
\newcommand{\mfa}{\mathfrak{a}} \newcommand{\mfA}{\mathfrak{A}} \newcommand{\fA}{\mathfrak{A}}
\newcommand{\mfb}{\mathfrak{b}} \newcommand{\mfB}{\mathfrak{B}} \newcommand{\fB}{\mathfrak{B}}
\newcommand{\mfc}{\mathfrak{c}} \newcommand{\mfC}{\mathfrak{C}}
\newcommand{\mfd}{\mathfrak{d}} \newcommand{\mfD}{\mathfrak{D}}
\newcommand{\mfe}{\mathfrak{e}} \newcommand{\mfE}{\mathfrak{E}}
\newcommand{\mff}{\mathfrak{f}} \ \newcommand{\mfF}{\mathfrak{F}}
\newcommand{\mfg}{\mathfrak{g}} \newcommand{\mfG}{\mathfrak{G}}
\newcommand{\mfh}{\mathfrak{h}} \newcommand{\mfH}{\mathfrak{H}}
\newcommand{\mfi}{\mathfrak{i}} \newcommand{\mfI}{\mathfrak{I}}
\newcommand{\mfj}{\mathfrak{j}} \newcommand{\mfJ}{\mathfrak{J}}
\newcommand{\mfk}{\mathfrak{k}} \newcommand{\mfK}{\mathfrak{K}}
\newcommand{\mfl}{\mathfrak{l}} \ \newcommand{\mfL}{\mathfrak{L}}
\newcommand{\mfm}{\mathfrak{m}} \newcommand{\mfM}{\mathfrak{M}}
\newcommand{\mfn}{\mathfrak{n}} \newcommand{\mfN}{\mathfrak{N}}
\newcommand{\mfo}{\mathfrak{o}} \newcommand{\mfO}{\mathfrak{O}}
\newcommand{\mfp}{\mathfrak{p}} \newcommand{\mfP}{\mathfrak{P}}
\newcommand{\mfq}{\mathfrak{q}} \newcommand{\mfQ}{\mathfrak{Q}}
\newcommand{\mfr}{\mathfrak{r}} \newcommand{\mfR}{\mathfrak{R}}
\newcommand{\mfs}{\mathfrak{s}} \newcommand{\mfS}{\mathfrak{S}}
\newcommand{\mft}{\mathfrak{t}} \ \newcommand{\mfT}{\mathfrak{T}}
\newcommand{\mfu}{\mathfrak{u}} \newcommand{\mfU}{\mathfrak{U}}
\newcommand{\mfv}{\mathfrak{v}} \newcommand{\mfV}{\mathfrak{V}}
\newcommand{\mfw}{\mathfrak{w}} \newcommand{\mfW}{\mathfrak{W}}
\newcommand{\mfx}{\mathfrak{x}} \newcommand{\mfX}{\mathfrak{X}}
\newcommand{\mfy}{\mathfrak{y}} \newcommand{\mfY}{\mathfrak{Y}}
\newcommand{\mfz}{\mathfrak{z}} \newcommand{\mfZ}{\mathfrak{Z}}

% --- Bold Font (\mathbf) ---
\newcommand{\bfa}{\mathbf{A}} \newcommand{\bA}{\mathbf{A}}
\newcommand{\bfb}{\mathbf{B}} \newcommand{\bB}{\mathbf{B}}
\newcommand{\bfc}{\mathbf{C}} \newcommand{\bC}{\mathbf{C}}
\newcommand{\bfd}{\mathbf{D}} \newcommand{\bD}{\mathbf{D}}
\newcommand{\bfe}{\mathbf{E}} \newcommand{\bE}{\mathbf{E}}
\newcommand{\bff}{\mathbf{F}} \newcommand{\bF}{\mathbf{F}}
\newcommand{\bfg}{\mathbf{G}} \newcommand{\bG}{\mathbf{G}}
\newcommand{\bfh}{\mathbf{H}} \newcommand{\bH}{\mathbf{H}}
\newcommand{\bfi}{\mathbf{i}} \newcommand{\bI}{\mathbf{I}}
\newcommand{\bfj}{\mathbf{j}} \newcommand{\bJ}{\mathbf{J}}
\newcommand{\bfk}{\mathbf{K}} \newcommand{\bK}{\mathbf{K}}
\newcommand{\bfl}{\mathbf{L}} \newcommand{\bL}{\mathbf{L}}
\newcommand{\bfm}{\mathbf{m}} \newcommand{\bM}{\mathbf{M}}
\newcommand{\bfn}{\mathbf{n}} \newcommand{\bN}{\mathbf{N}}
\newcommand{\bfo}{\mathbf{O}} \newcommand{\bO}{\mathbf{O}}
\newcommand{\bfp}{\mathbf{P}} \newcommand{\bP}{\mathbf{P}}
\newcommand{\bfq}{\mathbf{Q}} \newcommand{\bQ}{\mathbf{Q}}
\newcommand{\bfr}{\mathbf{R}} \newcommand{\bR}{\mathbf{R}}
\newcommand{\bfs}{\mathbf{S}} \newcommand{\bS}{\mathbf{S}}
\newcommand{\bft}{\mathbf{T}} \newcommand{\bT}{\mathbf{T}}
\newcommand{\bfu}{\mathbf{u}} \newcommand{\bU}{\mathbf{U}}
\newcommand{\bfv}{\mathbf{v}} \newcommand{\bV}{\mathbf{V}}
\newcommand{\bfw}{\mathbf{w}} \newcommand{\bW}{\mathbf{W}}
\newcommand{\bfx}{\mathbf{X}} \newcommand{\bX}{\mathbf{X}}
\newcommand{\bfy}{\mathbf{Y}} \newcommand{\bY}{\mathbf{Y}}
\newcommand{\bfz}{\mathbf{Z}} \newcommand{\bZ}{\mathbf{Z}}

% --- Sans Serif Font (\mathsf) ---
\newcommand{\sfA}{\mathsf{A}}
\newcommand{\sfB}{\mathsf{B}}
\newcommand{\sfC}{\mathsf{C}}
\newcommand{\sfD}{\mathsf{D}}
\newcommand{\sfE}{\mathsf{E}}
\newcommand{\sfF}{\mathsf{F}}
\newcommand{\sfG}{\mathsf{G}}
\newcommand{\sfH}{\mathsf{H}}
\newcommand{\sfI}{\mathsf{I}}
\newcommand{\sfJ}{\mathsf{J}}
\newcommand{\sfK}{\mathsf{K}}
\newcommand{\sfL}{\mathsf{L}}
\newcommand{\sfM}{\mathsf{M}}
\newcommand{\sfN}{\mathsf{N}}
\newcommand{\sfO}{\mathsf{O}}
\newcommand{\sfP}{\mathsf{P}}
\newcommand{\sfQ}{\mathsf{Q}}
\newcommand{\sfR}{\mathsf{R}}
\newcommand{\sfS}{\mathsf{S}}
\newcommand{\sfT}{\mathsf{T}}
\newcommand{\sfU}{\mathsf{U}}
\newcommand{\sfV}{\mathsf{V}}
\newcommand{\sfW}{\mathsf{W}}
\newcommand{\sfX}{\mathsf{X}}
\newcommand{\sfY}{\mathsf{Y}}
\newcommand{\sfZ}{\mathsf{Z}}

% ==============================================================================
% 4. VECTORS, GREEK & SPECIAL ELEMENTS (\boldsymbol / Overlines / Tildes)
% ==============================================================================

% --- Bold Italic Vectors (\boldsymbol) ---
\newcommand{\ba}{\boldsymbol{a}} 
\newcommand{\bb}{\boldsymbol{b}}
\newcommand{\bc}{\boldsymbol{c}} 
\newcommand{\bd}{\boldsymbol{d}}
\newcommand{\be}{\boldsymbol{e}} 
\newcommand{\bsf}{\boldsymbol{f}}
\newcommand{\bg}{\boldsymbol{g}} 
\newcommand{\bh}{\boldsymbol{h}}
\newcommand{\bi}{\boldsymbol{i}} 
\newcommand{\bj}{\boldsymbol{j}}
\newcommand{\bk}{\boldsymbol{k}} 
\newcommand{\bl}{\boldsymbol{l}}
\newcommand{\bsm}{\boldsymbol{m}} 
\newcommand{\bn}{\boldsymbol{n}}
\newcommand{\bo}{\boldsymbol{o}} 
\newcommand{\bp}{\boldsymbol{p}}
\newcommand{\bq}{\boldsymbol{q}} 
\newcommand{\bsr}{\boldsymbol{r}}
\newcommand{\bs}{\boldsymbol{s}}   
\newcommand{\bt}{\boldsymbol{t}}
\newcommand{\bu}{\boldsymbol{u}}   
\newcommand{\bv}{\boldsymbol{v}}
\newcommand{\bw}{\boldsymbol{w}} 
\newcommand{\bx}{\boldsymbol{x}}
\newcommand{\by}{\boldsymbol{y}}   
\newcommand{\bz}{\boldsymbol{z}}

% --- Bold Greek Symbols ---
\newcommand{\balpha}{{\bm{\alpha}}}
\newcommand{\bbeta}{\boldsymbol{\beta}}
\newcommand{\bdelta}{\bm{\delta}}
\newcommand{\blambda}{\boldsymbol{\lambda}}
\newcommand{\bomega}{\boldsymbol{\omega}}
\newcommand{\bmu}{\boldsymbol{\mu}}
\newcommand{\bnu}{\boldsymbol{\nu}}
\newcommand{\bpi}{\boldsymbol{\pi}}

% --- Overlines (\overline) ---
\newcommand{\oD}{\mkern2.5mu\overline{\mkern-2.5mu D}}
\newcommand{\oE}{\mkern2.5mu\overline{\mkern-2.5mu E}}
\newcommand{\oF}{\mkern2.5mu\overline{\mkern-2.5mu F}}
\newcommand{\oK}{\mkern2.5mu\overline{\mkern-2.5mu K}}
\newcommand{\oL}{\overline{L}}
\newcommand{\oM}{\mkern2.5mu\overline{\mkern-2.5mu M}}
\newcommand{\oP}{\mkern2.5mu\overline{\mkern-2.5mu P}}
\newcommand{\oQ}{\overline{Q}}
\newcommand{\oR}{\mkern2.5mu\overline{\mkern-2.5mu R}}
\newcommand{\oS}{\mkern2.5mu\overline{\mkern-2.5mu S}}
\newcommand{\oT}{\overline{T}}
\newcommand{\oU}{\overline{U}}
\newcommand{\oW}{\overline{W}}
\newcommand{\oX}{\overline{X}}
\newcommand{\oZ}{\mkern2.5mu\overline{\mkern-2.5mu Z}}
\newcommand{\og}{\overline{g}}
\newcommand{\oh}{\overline{h}}
\newcommand{\ok}{\overline{k}}
\newcommand{\on}{\overline{n}}
\newcommand{\ox}{\overline{x}}
\newcommand{\oalpha}{\overline{\alpha}}
\newcommand{\obeta}{\overline{\beta}}
\newcommand{\oGamma}{\overline{\Gamma}}
\newcommand{\ochi}{\overline{\chi}}
\newcommand{\oeta}{\overline{\eta}}
\newcommand{\okappa}{\overline{\kappa}}
\newcommand{\ophi}{\mkern2.5mu\overline{\mkern-2.5mu \phi}}
\newcommand{\opsi}{\mkern2.5mu\overline{\mkern-2.5mu \psi}}
\newcommand{\otheta}{\mkern2.5mu\overline{\mkern-2.5mu \theta}}
\newcommand{\oPhi}{\overline{\Phi}}
\newcommand{\oTheta}{\overline{\Theta}}
\newcommand{\oUpsilon}{\overline{\Upsilon}}
\newcommand{\oxi}{\overline{\xi}}
\newcommand{\oXi}{\overline{\Xi}}
\newcommand{\ozeta}{\overline{\zeta}}
\newcommand{\obn}{\overline{\bn}}
\newcommand{\Qbar}{\overline{\QQ}}
\newcommand{\oFq}{\overline{\FF}_q}
\newcommand{\oFqt}{\overline{\FF_q(t)}}
\newcommand{\oEE}{\overline{\EE}}
\newcommand{\soEE}{\mkern1mu\overline{\mkern-1mu \EE}}
\newcommand{\obE}{\overline{\bE}}
\newcommand{\sobE}{\mkern1mu\overline{\mkern-1mu \bE}}
\newcommand{\oFF}{\overline{\FF}}
\newcommand{\oinfty}{\overline{\infty}}

% --- Tildes (\widetilde) ---
\newcommand{\tB}{\widetilde{B}}
\newcommand{\tC}{\widetilde{C}}
\newcommand{\tE}{\widetilde{E}}
\newcommand{\tcE}{\widetilde{\cE}}
\newcommand{\tM}{\widetilde{M}}
\newcommand{\tP}{\widetilde{P}}
\newcommand{\tQ}{\widetilde{Q}}
\newcommand{\tS}{\widetilde{S}}
\newcommand{\tT}{\widetilde{T}}
\newcommand{\tU}{\widetilde{U}}
\newcommand{\tV}{\widetilde{V}}
\newcommand{\tX}{\widetilde{X}}
\newcommand{\teps}{\widetilde{\varepsilon}}
\newcommand{\tbe}{\widetilde{\be}}
\newcommand{\tg}{\widetilde{g}}
\newcommand{\tilh}{\widetilde{h}}
\newcommand{\tiota}{\widetilde{\iota}}
\newcommand{\tfp}{\widetilde{\fp}}
\newcommand{\tfq}{\widetilde{\fq}}
\newcommand{\tpi}{\widetilde{\pi}}
\newcommand{\tphi}{\widetilde{\phi}}
\newcommand{\tPhi}{\widetilde{\Phi}}
\newcommand{\tPsi}{\widetilde{\Psi}}
\newcommand{\trho}{\widetilde{\rho}}
\newcommand{\ttheta}{\widetilde{\theta}}
\newcommand{\tx}{\widetilde{x}}
\newcommand{\ty}{\widetilde{y}}

% --- Other Hat / Underline / Base Modifiers ---
\newcommand{\udl}[1]{\underline{#1}}
\newcommand{\ui}{\underline{i}}
\newcommand{\ufp}{\underline{\fp}}
\newcommand{\ufq}{\underline{\fq}}
\newcommand{\ut}{\underline{t}}
\newcommand{\htheta}{\hat{\theta}}
\newcommand{\hzeta}{\hat{\zeta}}
\newcommand{\ulM}{\underline{M}}
\newcommand{\btau}{\bar{\tau}}
\newcommand{\Fqts}{\FF_q[\ut_s]}

% ==============================================================================
% 5. CATEGORY LABELS & STRINGS (\textnormal / \mathbf Category Names)
% ==============================================================================
\newcommand{\comr}{\textnormal{\textbf{ComRings}}}
\newcommand{\Mod}{\textnormal{\textbf{Mod}}}
\newcommand{\lmod}{\sideset{_R}{}{\mathop{\Mod}}}
\newcommand{\rmod}{\sideset{}{_R}{\mathop{\Mod}}}
\newcommand{\mmod}[1]{\sideset{_{#1}}{}{\mathop{\Mod}}}
\newcommand{\LMod}{\mathrm{LMod}}
\newcommand{\RMod}{\mathrm{RMod}}
\newcommand{\BMod}{\mathrm{BMod}}
\newcommand{\set}{\textnormal{\textbf{Sets}}}
\newcommand{\Set}{\EuScript{S}\mathrm{et}}
\newcommand{\sSet}{\mathbf{sSet}}
\newcommand{\grp}{\textnormal{\textbf{Grp}}}
\newcommand{\Ab}{\textnormal{{\textbf{Ab}}}}
\newcommand{\Top}{\textnormal{\textbf{Top}}}
\newcommand{\cTop}{\mathrm{Top}}
\newcommand{\Topo}{\textnormal{\textbf{Top}}}
\newcommand{\Ring}{\EuScript{R}\mathrm{ing}}
\newcommand{\cring}{\EuScript{C}\mathrm{ring}}
\newcommand{\abel}{\EuScript{A}\mathrm{bel}}
\newcommand{\vect}{\mathrm{Vect}}
\newcommand{\hilb}{\mathrm{Hilb}}
\newcommand{\andm}{\textnormal{-}\textsf{AndMod}}
\newcommand{\abm}{\textnormal{-}\textsf{AbMod}}
\newcommand{\mot}{\textnormal{-}\textsf{Mot}}
\newcommand{\motI}{\textnormal{-}\textsf{MotI}}
\newcommand{\effmot}{\textnormal{-}\textsf{Mot}^{\eff}}
\newcommand{\fgeffmot}{\textnormal{-}\textsf{Mot}^{\eff}_{<\infty}}
\newcommand{\imot}{\textnormal{-}\textsf{Mot}^{\circ}}
\newcommand{\armot}{\textnormal{-}\textsf{Mot}_{\textnormal{Artin}}^{\circ}}
\newcommand{\reps}{\textnormal{-}\textsf{Reps}}

% ==============================================================================
% 6. RELATIONS, ARROWS & STRUCTURAL COMMANDS (关系符、箭头及简写环境)
% ==============================================================================
\newcommand{\assign}{\mathrel{\vcenter{\baselineskip0.5ex \lineskiplimit0pt
			\hbox{\scriptsize.}{\hbox{\scriptsize.}}}}%
	=}
\newcommand{\rassign}{=%
	\mathrel{\vcenter{\baselineskip0.5ex \lineskiplimit0pt
			\hbox{\scriptsize.}{\hbox{\scriptsize.}}}}%
}
\newcommand{\isoto}{\stackrel{\sim}{\to}}
\newcommand{\iso}{\stackrel{\sim}{\longrightarrow}}
\newcommand{\upto}[1]{\overset{#1}{\to}}
\newcommand{\uparr}[1]{\stackrel{\to}{#1}}
\newcommand{\inlim}{\varprojlim}
\newcommand{\dlim}{\varinjlim}
\newcommand{\plim}{\varprojlim}
\newcommand{\dmn}{\trianglerighteq}
\newcommand{\mayeq}{\stackrel{?}{=}}

% --- Environments Shortcuts ---
\newcommand\beq{\begin{equation}}
	\newcommand\eeq{\end{equation}}
\newcommand\bea{\begin{eqnarray}}
	\newcommand\eea{\end{eqnarray}}
\newcommand\nn{\nonumber \\}
\newcommand{\rB}{\textnormal{B}}
% ==============================================================================
% 7. MISCELLANEOUS CUSTOM SYMBOLS & MATH SHAPES
% ==============================================================================
\definecolor{dummycolor}{rgb}{0,0,0} % 占位环境符，如需要可配合其余包
\newcommand{\z}[1]{\mathbb{Z}/#1\mathbb{Z}}
\newcommand{\dis}{\displaystyle}
\newcommand{\eps}{\varepsilon}
\newcommand{\one}{\mathbf{1}}
\newcommand{\bbk}{\mathbb{k}}
\newcommand{\forget}{\mathbf{f}}
\newcommand{\sep}{\textnormal{sep}}
\newcommand{\inn}{\mathrm{in}}
\newcommand{\si}{\mathrm{si}}
\newcommand{\sr}{\mathrm{sr}}
\newcommand{\perf}{\mathrm{perf}}
\newcommand{\tors}{\mathrm{tors}}
\newcommand{\nr}{\mathrm{nr}}
\newcommand{\Ga}{\GG_{\mathrm{a}}}
\newcommand{\Gm}{\GG_{\mathrm{m}}}
\newcommand{\LLhat}{\widehat{\LL}}
\definecolor{C}{rgb}{0,0,0} \newcommand{\C}{\CC_{\infty}}
\newcommand{\tauid}{{\tau=\mathrm{id}}}
\newcommand{\sigmaid}{{\sigma=\mathrm{id}}}
\newcommand{\power}[2]{{#1 [[ #2 ]]}}
\newcommand{\laurent}[2]{{#1 (( #2 ))}}
\newcommand{\brac}[2]{\genfrac{\{}{\}}{0pt}{}{#1}{#2}}
\newcommand{\abs}[1]{\left|#1\right|}
\newcommand{\pphi}{\varphi}
\newcommand{\ksym}[2]{\left(\frac{#1}{#2}\right)}
\newcommand{\res}[3]{\left(\frac{#1}{#2}\right)_{#3}}
\newcommand{\resi}[3]{\left(\frac{#1}{#2}\right)_{#3}}

% --- Norm Expressions ---
\newcommand{\norm}[1]{|\!|#1|\!|}
\newcommand{\bignorm}[1]{\bigg|\!\bigg|#1\bigg|\!\bigg|}
\newcommand{\dnorm}[1]{\lVert #1 \rVert}
\newcommand{\cdnorm}[1]{\lVert #1 \rVert}
\newcommand{\inorm}[1]{{\lvert #1 \rvert}_{\infty}}
\newcommand{\idnorm}[1]{{\lVert #1 \rVert}_{\infty}}
\newcommand{\diam}[1]{\langle #1 \rangle}
\newcommand{\smod}[1]{{\, (\mathrm{mod}\, #1)}}
\newcommand{\Aord}[2]{{[ #1 ]}_{#2}}
\newcommand{\bigAord}[2]{{\left[ #1 \right]}_{#2}}

% --- Extra Matrix/Decoration Tools ---
\newcommand{\tr}{{\mathsf{T}}}
\newcommand{\bdot}{\mathbin{.}}
\newcommand\braceover[2]{%
	\makebox[-2pt][l]{$\smash{\overbrace{\phantom{%
					\begin{bmatrix}#2\end{bmatrix}}}^{\text{#1}}}$}#2}

\newcommand{\conj}[1]{\overline{#1}}
\newcommand{\Spec}{\operatorname{Spec}}
\renewcommand{\laurent}[2]{#1(\!(#2)\!)}

\begin{abstract}
In this article, we study the transcendence of special values of certain Goss type $L$-series at non-negative integers, which takes values in a function field of characteristic $p$. We show that, for a Drinfeld module $\varphi$ over $K$ and an Artin representation $\rho:G_K\to \GL_n(\conj \FF_q)$, the twisted special value $L(\varphi^\vee,\rho,k)$ is transcendental over $K$ for every non-negative integer $k$. The proof uses the theory of Artin twists of Drinfeld modules, Taelman's regulators of $t$-modules, and the algebraic independence theorem of Gezmi{\c s} and Namoijam for tractable coordinates of logarithms. As a consequence, we deduce the transcendence of the special $L$-value $L(\rho,k)$ for every positive integers $k$.
\end{abstract}

\subjclass[2020]{Primary 11G09; Secondary 11J93, 11M38, 11R58}
\maketitle

\section{Introduction}
The arithmetic of special values of Goss $L$-series is one of the central themes in function field arithmetic; see \cite{Goss}. In the motivic language of Anderson and Drinfeld, these $L$-values are attached to $t$-motives and their associated $t$-modules. A common theme in Taelman's work and its subsequent developments is the relation between special $L$-values at $s=0$ and regulators of $t$-modules. In \cite{taelman2009special}, Taelman conjectured that, for a uniformizable abelian $t$-module with everywhere good reduction, there exists a submodule of suitable rank in its Lie algebra whose exponential consists of integral points and its special $L$-value at $s=0$ is governed by a relevant ratio of covolumes. Taelman's class number formula for Drinfeld modules \cite{taelman2012special} can be viewed as a special case of this conjecture. Fang subsequently established a class formula for abelian $t$-modules in \cite{fang2015special}. Angl{\`e}s, Ngo Dac, and Tavares Ribeiro later proved Taelman's conjecture for a large class of $t$-modules in \cite{ANDTR20} and established a class formula for admissible Anderson modules in \cite{angles2022class}. The resulting regulators can be represented as determinants of matrices whose entries are logarithms of algebraic points. On the transcendence side, Papanikolas's Tannakian theory \cite{Pap08} and the theorem of Chang--Papanikolas on periods and logarithms of Drinfeld modules \cite{CP12} provide a powerful algebraic independence criterion. 

More recently, Gezmi{\c s} and Namoijam introduced the Anderson $t$-modules $$G_k=\varphi\otimes C^{\otimes k}$$ where $C$ is the Carlitz module, and proved some algebraic independence results for the tractable coordinates of their logarithms \cite[Theorem 1.3]{GN25}. The notion of tractable coordinates, due to Brownawell and Papanikolas, and given precise formulations in \cite[Definition 3.3.1]{CM21} and \cite[Definition 5.14]{CCM22}, abstracts the last-coordinate logarithm phenomenon for tensor powers of the Carlitz module studied by Anderson--Thakur \cite{AT90} and Jing Yu \cite{Yu91}. Gezmi{\c s} and Namoijam further applied these results and obtained a transcendence result for the special values $L(M_\varphi,k)$ at non-negative integers $k$ in \cite[Theorem 1.1]{GN24}, where $M_\varphi$ is the $t$-motive attached to $\varphi$.
For a Drinfeld module $\varphi_t=\theta+a_1\tau$ of rank 1, this
construction gives the example $G_k=C_{k+1}^{(a_1)}$,  where $C_{k+1}^{(a_1)}$ is the generalized Carlitz $(k+1)$-st tensor power $t$-module whose $\tau$-coefficient matrix has the single lower-left entry $a_1$.

 Throughout this article, we let $\bA=\FF_q[t]$, $A=\FF_q[\theta]$, and $K=\Frac(A)=\FF_q(\theta)$. We fix the structure map $\iota:\bA\longrightarrow K$, given by $t\mapsto \theta$, which makes $K$ into an $\bA$-field. We let $K_\infty = \FF_q(\!(1/\theta)\!)$ denote the completion of $K$ at the infinite place $\infty$ and let $\CC_\infty$ be the completion of an algebraic closure of $K_\infty$ at $\infty$. Put $G_K:=\Gal(K^\sep/K)$, the absolute Galois group of $K$. Let $$\varphi_t=\theta+a_1\tau+\cdots+a_r\tau^r$$ be a Drinfeld module over $K$, and let $\rho:G_K\longrightarrow \GL_n(\conj{\FF}_q)$ be an Artin representation. For $k\in \ZZ_{\geq 0}$, the twisted Goss $L$-series $L(\varphi^\vee,\rho,k)$ takes values in $K_\infty$ and has been studied in \cite{Ye26}. The transcendence of $L(\varphi^\vee,\rho,0)$ has been established in \cite[Theorem 5.2]{Ye26} and the purpose of the present article is to prove the transcendence of $L(\varphi^\vee,\rho,k)$ for all positive integers $k\geq 1$.

We are ready to state our main theorem.
\begin{thm}[Theorem \ref{thm:main}]\label{thm:int}
	 Let $\varphi$ be a Drinfeld module over $K$, and let $\rho:G_K\to \GL_n(\conj{\FF}_q)$ be an Artin representation. Then, for every non-negative integer $k$, $L(\varphi^\vee,\rho,k)$ is transcendental over $\conj K$.
\end{thm}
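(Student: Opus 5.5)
The plan is to reduce $L(\varphi^\vee,\rho,k)$ to a finite combination of logarithms of algebraic points on a single $t$-module, and then apply the algebraic independence theorem of Gezmi{\c s}--Namoijam.

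\textbf{Step 1 (Artin twist).} Choose a finite Galois extension $F/K$ through which $\rho$ factors, with $\rho$ realized over a finite field $\FF_{q^m}$. Following \cite{Ye26}, form the Artin twist $\varphi_\rho$ of $\varphi$ by $\rho$. This is a Drinfeld-type $t$-module over $K$, obtained by Weil restriction of $\varphi$ from $F$ to $K$ and cutting out the $\rho$-isotypic part. Its $t$-motive is $M_\varphi\otimes M_\rho$, where $M_\rho$ is the Artin motive, and its $L$-function satisfies
\[
L(\varphi^\vee,\rho,s)=L\bigl((M_\varphi\otimes M_\rho)^\vee,s\bigr).
\]
Tensoring with $C^{\otimes k}$ gives the $t$-module $G_{k,\rho}=\varphi_\rho\otimes C^{\otimes k}$. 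This should equal the analogous twist of $G_k=\varphi\otimes C^{\otimes k}$. Hence $L(\varphi^\vee,\rho,k)$ is the value at $s=0$ of the $L$-series attached to $G_{k,\rho}$.

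\textbf{Step 2 (class formula).} Apply the class formula for admissible Anderson modules of Angl{\`e}s--Ngo Dac--Tavares Ribeiro \cite{angles2022class}, or Fang's \cite{fang2015special}, to $G_{k,\rho}$. After verifying admissibility, this gives
\[
L(\varphi^\vee,\rho,k)=[\text{class module}]_A\cdot \Reg(G_{k,\rho}).
\]
Here the first factor is a nonzero element of $K$. The regulator is a determinant of a matrix whose entries are coordinates of $\Log_{G_{k,\rho}}$ evaluated at $K$-points. Unwinding the Weil restriction, I would express these as $\FF_{q^m}F$-linear combinations of tractable (last) coordinates of $\Log_{G_k}$ at finitely many $\conj K$-points $\by_1,\dots,\by_\ell$. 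Thus $L(\varphi^\vee,\rho,k)$ becomes a polynomial $P$ with $\conj K$-coefficients in these tractable coordinates. $P$ is homogeneous, its degree equals the rank of the regulator lattice, and it is nonzero as an element of $\CC_\infty$.

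\textbf{Step 3 (transcendence).} Apply \cite[Theorem 1.3]{GN25}. After choosing the $\by_i$ linearly independent over $\End(\varphi)$ by a standard basis reduction, their tractable coordinates, together with the relevant quasi-periods, are algebraically independent over $\conj K$. Because $P$ is homogeneous of positive degree in these algebraically independent quantities, its value cannot lie in $\conj K$. Indeed, the equation $P=c$ with $c\in\conj K$ would be a nontrivial algebraic relation: if $c\neq0$, the polynomial $P-c$ is nonzero because it has a nonzero constant term, and $c=0$ is excluded since $L\neq 0$, which follows from the Euler product. The case $k=0$ is \cite[Theorem 5.2]{Ye26} and runs the same way.

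\textbf{Main obstacle.} I expect the hardest part to be Step 2. One must show that the twisted regulator, after the Weil-restriction and isotypic decomposition, really is a nonzero polynomial in tractable coordinates of $\Log_{G_k}$. Tractable coordinates behave well only on the last coordinate, while the regulator involves full Lie algebra coordinates of $G_{k,\rho}$. My first attempt would use the explicit $\rho$-eigenprojector
\[
e_\rho=\frac{\dim\rho}{[F:K]}\sum_g \Tr\rho(g^{-1})\,g,
\]
commuting with $\Log$, to express the relevant lattice via Galois conjugates of points on $G_k$. I would then choose bases so that the regulator determinant only sees last coordinates, much as in the untwisted argument of \cite{GN24}. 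Nonvanishing of the resulting determinant would be checked via the Euler product, $L(\varphi^\vee,\rho,k)\neq 0$.
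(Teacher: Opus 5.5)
Your outline (Artin twist, tensoring with $C^{\otimes k}$, class formula, comparison with $G_k$, then \cite[Theorem 1.3]{GN25} and the homogeneous-polynomial argument) matches the paper's skeleton. However, Step~2 is asserted rather than proved, and the step you flag as the main obstacle is exactly the content of the proof. Two ingredients are missing.

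First, the regulator of $\mcE_k$ is a priori a covolume ratio in the whole $N(rk+1)$-dimensional space $\Lie_{\mcE_k}(K_\infty)$, on which $\bA$ acts through $\theta I_{d_k}+\mcU_k$. It therefore involves all coordinates of the logarithms of units, while Theorem~\ref{thm:Gk-ai} controls only tractable coordinates. The paper resolves this with the refined class formula of Lemma~\ref{lem:Z} (from \cite[Theorem 4.4, Corollary 4.5]{ANDTR20}, cf.\ \cite[Theorem 4.1]{GN24}), applied to the integral model of Lemma~\ref{lem:den}. There is a subspace $\mcZ$ with $\pi_k:\mcZ\isoto W_{\widehat{\mcE}_k}(K_\infty)$ and $L(\widehat{\mcE}_k^\vee,0)=c_k\det M(\widehat h)$, where $M(\widehat h)$ records only the last $r$ blocks $\pi_k(g_j)$ of an $A$-basis of $U(\widehat{\mcE}_k/A)\cap\mcZ$.

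Second, you need an isomorphism $\mcE_k\otimes K^\sep\cong G_k^{\oplus N}\otimes K^\sep$ that carries these $W$-coordinates to tractable coordinates by an algebraic linear map. The paper writes it down explicitly. Start from $\mcE_t=P^{-1}\varphi_t^{\oplus N}P$ in Theorem~\ref{thm:AT}(3), and set $\widetilde\Pi_k=\mfS_k^{-1}\Pi_k$ with $\Pi_k$ block diagonal with blocks $P^{(\ell-1)}$. The Frobenius twists are forced by the $\tau$-part of $G_k(t)$; simply repeating $P$ would fail. This matrix satisfies $\mcE_k(t)=\widetilde\Pi_k^{-1}G_k^{\oplus N}(t)\widetilde\Pi_k$ (Proposition~\ref{prop:Gk-tran}). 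Because it is $\tau$-free and block diagonal, it gives $\mfR_{r,N}\,p_{k,N}(b\widetilde\Pi_k\by)=B_k\pi_k(\by)$ (Lemma~\ref{lem:coor}). Hence $\det Z=\det(\mfR_{r,N})^{-1}\det(B_k)\det M(\widehat h)$, where $Z$ is a matrix of tractable coordinates of logarithms of $\conj K$-points on $G_k$. Alternatively, for any $K^\sep$-isomorphism $Q$, the map $\partial Q$ intertwines $\mcU_k$ with $N_k^{\oplus N}$ and so induces an algebraic isomorphism of the $W$-quotients. Either way, some such argument has to be given.

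The route you sketch for this step does not work in general. You propose realizing the twist as the $\rho$-part of a Weil restriction and projecting with $e_\rho=\frac{\dim\rho}{[F:K]}\sum_g\Tr\rho(g^{-1})\,g$, but that is characteristic-zero representation theory. Here $[F:K]$ is often divisible by $p$ (wildly ramified $\rho$, e.g.\ factoring through Artin--Schreier extensions), so $e_\rho$ is undefined. Moreover, $\rho$ need not be semisimple or a direct summand of $\FF_q[\Gal(F/K)]$; already the trivial representation of $\ZZ/p\ZZ$ is not a summand of $\FF_q[\ZZ/p\ZZ]$. So there is no idempotent cutting $\MM(\varphi,\rho)$ out of $\Res_{F/K}\varphi$. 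The paper avoids Galois descent entirely by using the explicit model of $\mcE$ and the matrix $P$ from \cite{Ye26}.

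There are also two smaller corrections. First, the class formula computes $L(\mcE_k^\vee,0)$, which by Corollary~\ref{cor:Ek-L} equals $N_{K_\infty(\rho)/K_\infty}\bigl(L_S(\varphi^\vee,\rho,k)\bigr)$ up to finitely many nonzero algebraic Euler factors, not $L(\varphi^\vee,\rho,k)$ itself. Your identity $L(\varphi^\vee,\rho,s)=L((M_\varphi\otimes M_\rho)^\vee,s)$ fails when $\FF_q(\rho)\neq\FF_q$, though this is harmless since norms of algebraic elements are algebraic. Second, your ``standard basis reduction'' needs the fact that $F_k=\Frac(\End_{\conj K}(G_k))$ acts on tractable coordinates through $\conj K$-matrices, which is Lemma~\ref{lem:Gk-lin}.
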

As a consequence, we have the following.
\begin{cor}[Corollary \ref{cor:car}]
	Let $\rho:G_K\to \GL_n(\conj\FF_q)$ be an Artin representation. Then, for
	every integer $k\geq1$, the special $L$-value $L(\rho,k)$ is transcendental
	over $\conj K$.
\end{cor}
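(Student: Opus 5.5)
The statement to prove is the Corollary: $L(\rho,k)$ is transcendental over $\conj K$ for every integer $k\ge 1$. I will deduce it from Theorem \ref{thm:int} by specializing $\varphi$ to the Carlitz module $C$, with $C_t=\theta+\tau$.

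The first step is to pin down how the twisted Goss $L$-series of $C^\vee$ relates to the Artin $L$-series of $\rho$ with shifted argument. The Carlitz motive $M_C$ is the rank-one $t$-motive whose local Euler factors at a monic irreducible $f\in A$ are $(1-f(t)^{-1}\cdot(\text{stuff})X)$. More concretely, the $v$-adic Tate module of $C$ has geometric Frobenius at $f$ acting by the scalar $f$ (with $\theta$ replaced by $t$), since $C$ has good reduction everywhere on $\Spec A$.

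Hence, for the dual twisted by $\rho$, the Euler factor at an unramified place $f$ should be
\[
\det\bigl(1-\rho(\Frob_f)\, f^{-1}\, f^{-s}\bigr)^{-1}.
\]
Here the extra $f^{-1}$ comes from $C^\vee$ and the $f^{-s}$ from the variable. This gives the identity $L(C^\vee,\rho,k)=L(\rho,k+1)$ for all $k\ge 0$. The ramified places are handled by using $\rho$ on inertia invariants in both definitions, and the place $\infty$ by the normalization conventions of \cite{Ye26}.

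The second step is to verify these normalizations carefully. This means checking three things:
\begin{itemize}
\item the sign and the power of $f$ attached to $C^\vee$, since possible normalizations are monic generators $f$ versus $\theta$-substituted versions;
\item the treatment of the Euler factor at primes dividing the conductor of $\rho$;
\item whether $\rho$ or its contragredient appears in the twist.
\end{itemize}
If the twist is by the contragredient, I will simply apply Theorem \ref{thm:int} to $\rho^\vee$ instead, which is again an Artin representation into $\GL_n(\conj\FF_q)$. Any finite discrepancy in Euler factors would be a product of finitely many factors of the form $\det(1-\rho(\Frob_f)f^{-k}\cdots)^{\pm1}$. Such a product is algebraic over $K$ and nonzero, so it does not affect transcendence. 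This is the main place where care is needed, and I expect it to be the main obstacle, purely as bookkeeping against the definitions in \cite{Ye26}.

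Granting this identification, fix $k\ge1$ and put $k'=k-1\ge0$. Theorem \ref{thm:int} applied to $\varphi=C$ says that $L(C^\vee,\rho,k')$ is transcendental over $\conj K$. Since $L(\rho,k)$ equals $L(C^\vee,\rho,k')$ up to a nonzero algebraic factor, $L(\rho,k)$ is transcendental over $\conj K$.

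Note that $k=0$ is excluded, consistently with the shift. The value $L(\rho,0)$ would correspond to $k'=-1$, where Theorem \ref{thm:int} gives no information, and indeed it can be algebraic.
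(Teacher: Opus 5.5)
Your proposal is correct and follows the paper's route. The paper also specializes Theorem \ref{thm:main} to $\varphi=C$ and uses the Euler-factor identity $Q^\vee_\mfp(C,\rho,X)=Q_\mfp(\rho,\mfp^{-1}X)$ (its Lemma \ref{lem:carL}) to get $L_S(C^\vee,\rho,k-1)=L_S(\rho,k)$, then discards the finitely many nonzero algebraic local factors at $S$. (Minor point: with the paper's conventions it is the arithmetic Frobenius that acts on $V_\mfl(C)$ by $\mfp$, hence by $\mfp^{-1}$ on the dual, not the geometric one; your resulting Euler factor is nonetheless the right one.)
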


Theorem \ref{thm:int} provides an evidence for the following conjecture.
\begin{cjt}\label{cjt:main}
	 Let $E$ be a finite extension of $K$ and $G_E = \Gal(E^\sep/E)$. Let $\varphi$ be a Drinfeld module over $E$, and let $\rho:G_E\to \GL_n(\conj{\FF}_q)$ be an Artin representation. Then, for every non-negative integer $k$, $L(\varphi^\vee,\rho,k)$ is transcendental over $\conj E$.
\end{cjt}

Our method uses the Artin twist $\mcE=\mcE(\varphi,\rho)$ constructed by the author in \cite{Ye26}. This is an abelian Anderson $t$-module over $K$ satisfying $$\MM(\mcE)\cong_K \MM(\varphi,\rho),$$ and, after choosing a fundamental solution of $\rho$ (see \cite{Ye26} for the definition), it becomes isomorphic over $K^\sep$ to a direct sum of copies of $\varphi$, i.e., we have an isomorphism of $t$-modules over $K^\sep$ $$\mcE\otimes_K K^\sep\cong \varphi^{\oplus nd}\otimes_K K^\sep,$$ where $n=\dim_{\conj{\FF}_q}\rho$ and $d = [\FF_q(\rho):\FF_q]$. For $k\geq 1$, we tensor the Artin-twisted motive $\MM(\varphi,\rho)$ with the $k$-th tensor power of the Carlitz motive and construct an Anderson $t$-module $\mcE_k=\mcE(\varphi,\rho,k)$ over $K$ such that $$\MM(\mcE_k)\cong_K\MM(\varphi,\rho)\otimes \MM(C)^{\otimes k}.$$
This construction shifts the $L$-series in the expected way: after removing finitely many local factors, we have $$L_S(\mcE_k^\vee,0)=N_{K_\infty(\rho)/K_\infty}\bigl(L_S(\varphi^\vee,\rho,k)\bigr)$$ for some finite set $S$ of places containing all the bad primes. Hence it suffices to prove the transcendence of $L(\mcE_k^\vee,0)$.

By Taelman's conjecture, which was proved in \cite{ANDTR20} for a large class of $t$-modules and is true for our $\mcE_k$ for all $k\geq 1$, $L(\mcE_k^\vee,0)$ is a rational multiple of the regulator of $\mcE_k$. The key point is to compare this regulator with another determinant of logarithms on a simpler $t$-module. With $G_k=G_{\varphi,k}$ (see \cite{GN25,GN24} or \S \ref{subsec:gn-prelim} for the definition), we have
$$\mcE_k\otimes_K K^\sep\cong G_k^{\oplus N}\otimes_K K^\sep,$$ where $N = nd$.

To carry out this comparison, we construct a transition matrix $\widetilde{\Pi}_k$ with algebraic entries such that $\mcE_k(t) = \widetilde{\Pi}_k^{-1}G_k(t)^{\oplus N}\widetilde{\Pi}_k$. The main difficulty is that the correct transition matrix is not obtained by repeating the transition matrix for $\mcE$ in every block. More precisely, by \cite{Ye26}, there exists a matrix $P\in \GL_N(K^\sep)$ with $$\mcE_t=P^{-1}\varphi_t^{\oplus N}P.$$ We need to write down $\widetilde{\Pi}_k$ in terms of $P$. In fact, put $P_\ell:=P^{(\ell-1)}$ for all $1\leq\ell\leq r$.  After regrouping the coordinates according to a certain index set, and setting $$\Pi_k=\operatorname{diag}\left(P_1,\ldots,P_r,\ldots,P_1,\ldots,P_r,P_1\right),$$ where the ordered block $(P_1,\ldots,P_r)$ is repeated $k$ times and every entry in the diagonal notation is an $N\times N$ block, the transition matrix to the ordinary direct sum can be obtained from $\Pi_k$ by a permutation of coordinates. Note that the Frobenius-twisted blocks are forced by the $\tau$-terms in the matrix defining $G_k$.

After this comparison, the regulator determinant becomes an algebraic multiple of a determinant whose entries are $\conj K$-linear forms in the tractable coordinates of logarithms on $G_k$. Then, applying Gezmi{\c s} and Namoijam's algebraic independence theorem \cite[Theorem 1.3]{GN25} for these tractable coordinates, we deduce that this determinant is transcendental. As a consequence, if we take $\rho$ to be the trivial representation, our result recovers Gezmi{\c s} and Namoijam's transcendence result \cite[Theorem 1.1(i)]{GN24} as a special case.

The paper is organized as follows. In Section \ref{sec: prelim}, we briefly review some preliminaries including Anderson $t$-modules, $t$-motives, $L$-series, Artin twists of Drinfeld modules, Taelman's regulators, and Gezmi{\c s} and Namoijam's $t$-module $G_k$. In Section \ref{sec:Ek-construction}, we construct the $t$-module $\mcE_k$ explicitly and discuss its properties. In Section \ref{sec:transcendence-Ek}, we prove Theorem \ref{thm:int} and discuss possible approaches to Conjecture \ref{cjt:main}, highlighting the main obstructions and explaining why our present method does not extend to the conjecture in full generality.

\textit{Acknowledgment.} The author would like to thank Yen-Tsung Chen, O{\u g}uz Gezmi{\c s}, Changningphabbi Namoijam, and Matthew Papanikolas for many helpful discussions, valuable suggestions, and generous assistance during the preparation of this work.
\section{Preliminaries}\label{sec: prelim}
\subsection{Notations}
Throughout the paper, we will use the following notation.
\begin{longtable}{l @{\hspace{1em}=\hspace{1em}} p{0.85\textwidth}}
	$p$ & a prime number in $\ZZ$.\\
	$\FF_q$ & a finite field with $q$ elements, where $q$ is a power of $p$.\\
	$\bA$ & $ \FF_q[t]$.\\
	$A$ & $ \FF_q[\theta]$.\\
	$K$ & $ \FF_q(\theta)$.\\
	$v_\infty$ & $-\deg$.\\
	$\inorm{\,\cdot\,}$ & the normalized $\infty$-adic norm on $K$ given by $\inorm{a} = q^{-d_\infty v_\infty(a)}$. \\
	$K_{\infty}$ & the completion of $K$ with respect to $\inorm{\,\cdot\,}$, i.e. $\laurent{\FF_q}{1/\theta}$. \\
	$\conj{K}_\infty$ & a fixed algebraic closure of $K_\infty$.\\
	$\CC_\infty$ & the completion of $\conj{K}_{\infty}$ with respect to $\inorm{\cdot}$, which is also algebraically closed. \\
	$\conj{K}$ & the algebraic closure of $K$ in $\CC_\infty$. \\
\end{longtable}

If $R$ is an $\FF_q$-algebra, we write $$R[\tau]=\left\{\sum_{i=0}^n a_i\tau^i\mid a_i\in R\right\}$$ for the twisted polynomial ring, in which the multiplication satisfies $\tau a=a^q\tau$ for all $a\in R$. For an element $\dis f = \sum_{i=1}^mf_it^i\in R[t]$, we define the Frobenius twists of $f$ by $$f^{(1)} = \sum_{i=1}^{m}f_i^{q} t^i.$$ For every positive integer $n > 0$, we define $f^{(n)} = (f^{(n-1)})^{(1)}$ recursively. We set $f^{(0)} = f$.  If, in addition, $R$ is perfect, i.e. $\tau: R\to R, a\mapsto a^q$ is an isomorphism, then this definition extends to all integers. For example, in this case, $\dis f^{(-1)} = \sum_{i=1}^{m}f_i^{1/q}t^i$. Let $\GG_{a,R}$ be the additive group scheme over $R$. It is well known that $R[\tau] \cong \End_{\FF_q}(\GG_{a,R})$, where the latter is the ring of all $\FF_q$-linear endomorphisms of $\GG_{a,R}$.

For a matrix $M=(m_{ij})$ with entries in $R[t]$, we write $$M^{(i)}=\left(m_{ij}^{(i)}\right)$$ for the $i$-th Frobenius twist, and if $R$ is perfect we also use $M^{(-i)}$ for the inverse Frobenius twist. 
\subsection{Anderson $t$-modules and $t$-motives}\label{subsec:anderson-prelim} In this section, we briefly review the theory of $t$-modules and $t$-motives introduced by Anderson \cite{anderson1986t}. We mainly follow \cite{hartl2020pinks} and \cite{Goss}.

Let $R$ be an $\bA$-algebra with the structure map $\iota:\bA\to R$. Under the standard identification $$\operatorname{Hom}_{\FF_q,R}(\GG_{a,R}^{\oplus n},\GG_{a,R}^{\oplus m}) \cong \Mat_{m\times n}(R)[\tau],$$ where $\tau$ is the Frobenius map raising an element to its $q$-th power, an $\FF_q$-linear homomorphism $\GG_{a,R}^{\oplus n}\to \GG_{a,R}^{\oplus m}$ of group schemes over $R$ can be identified with an element $\sum_i A_i\tau^i\in \Mat_{m\times n}(R)[\tau]$, whose tangent map at the neutral element is given by $\partial\left(\sum_i A_i\tau^i\right):=A_0$.

Let $\mathfrak{j}=(t-\iota(t))\subset R[t]$. For an $R[t]$-module $M$, set $\tau^*M:=R[t]\otimes_{\tau,R[t]}M$. An \emph{Anderson $t$-motive} over $R$ is a pair $(M,\tau_M)$, where $M$ is a projective $R[t]$-module and
$$\tau_M:\tau^*M[\mathfrak{j}^{-1}]\stackrel{\sim}{\longrightarrow}M[\mathfrak{j}^{-1}]$$ is an isomorphism of $R[t][\mathfrak{j}^{-1}]$-modules. It is called \emph{effective} if $\tau_M$ restricts to a morphism $\tau_M:\tau^*M\longrightarrow M$.  It is called \emph{abelian} if $M$ is finite projective over $R[t]$.

An \emph{Anderson $t$-module} over $R$ of dimension $d$ is an $\FF_q$-algebra homomorphism $\mcE:\bA\longrightarrow \End_{\FF_q,R}(\GG_{a,R}^{\oplus d})\cong \Mat_d(R)[\tau]$, such that $\left(\partial\mcE_t-\iota(t)I_d\right)^d=0$. An Anderson $t$-module of dimension $1$ is called a \emph{Drinfeld module}. Thus a Drinfeld module over $R$ has the form $$\varphi_t=\iota(t)+a_1\tau+\cdots+a_r\tau^r,$$ with $a_r\neq 0$ and $r$ is called its rank. 

Let $S$ be an $R$-algebra, then we have an $\bA$-module $\Lie_\mcE(S)$ whose underlying set is $S^d$ and the $\bA$-action is given by $$a \cdot \mathbf{x} = \partial \mcE_a(\mathbf{x}),$$ for all $a\in \bA$ and $\mathbf{x}\in \Lie_\mcE(S)$. The $\bA$-module scheme $\Lie_\mcE$ is called the \textit{Lie algebra} of $\mcE$. Similarly, $\mcE(S)$ of $S$-valued points has an $\bA$-module structure given by $$a \cdot \mathbf{x} =  \mcE_a(\mathbf{x}),$$ for all $a\in \bA$ and $\mathbf{x}\in \mcE(S)$.

If $R\subseteq \CC_\infty$ is a field, an Anderson $t$-module $\mcE$ admits an exponential map $\exp_\mcE:\Lie_\mcE(\CC_\infty)\longrightarrow \mcE(\CC_\infty)$, given by $$\exp_\mcE = \sum_{i=0}^{\infty}A_i\tau^i$$ with $A_i\in \Mat_{d}(\CC_\infty)$. It is $\FF_q$-linear, has identity as its constant term, and satisfies the functional equation $\exp_\mcE\circ \partial\mcE_a=\mcE_a\circ \exp_\mcE,$ for all $a\in \bA$. We say that $\mcE$ is \emph{uniformizable} if $\exp_\mcE$ is surjective. Its \emph{period lattice} is $\Lambda_\mcE:=\ker(\exp_\mcE)\subseteq \Lie_\mcE(\CC_\infty)$.

Let $\mcE$ be an Anderson $t$-module over $R$. Following Anderson, one attaches to $\mcE$ the $t$-motive $\MM(\mcE):=\operatorname{Hom}_{\FF_q,R}(\mcE,\GG_{a,R})$. It is an $R[t]$-module via $$(r\otimes a)\cdot m=r\cdot m\circ \mcE_a,\qquad r\in R,\ a\in \bA,$$ and it carries a $\tau$-semilinear action $\tau(m)=\Frob_{q,\GG_a}\circ m$. If $\MM(\mcE)$ is finite projective of rank $r$ over $R[t]$, then $\mcE$ is called \emph{abelian} and $r$ is called the rank of $\mcE$.

\begin{thm}[Anderson, Hartl {\cite[Theorem 3.5]{hartl2017isogenies}}]\label{thm:and}
The functor $$\mcE\longmapsto \MM(\mcE)$$ from the category of abelian Anderson $t$-modules over $R$ to that of effective $t$-motives over $R$ is contravariant and fully faithful. Its essential image consists of the effective $t$-motives which are finitely generated over $R[\tau]$ after a faithfully flat base change.
\end{thm}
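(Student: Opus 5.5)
Contravariance is immediate from the definition $\MM(\mcE)=\operatorname{Hom}_{\FF_q,R}(\mcE,\GG_{a,R})$: a morphism $f:\mcE\to\mcE'$ induces $f^*:\MM(\mcE')\to\MM(\mcE)$, $m\mapsto m\circ f$. This map is $R$-linear. It commutes with the $\tau$-action because $\Frob_{q,\GG_a}\circ(m\circ f)=(\Frob_{q,\GG_a}\circ m)\circ f$. It is $\bA$-linear because $f\circ\mcE_a=\mcE'_a\circ f$. Hence $f^*$ is a morphism of effective $t$-motives.

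\textbf{Full faithfulness.} We work with the $R[\tau]$-module structure. Write $\mcE$ of dimension $d$. The identification $\operatorname{Hom}(\GG_{a,R}^{d},\GG_{a,R})\cong\Mat_{1\times d}(R)[\tau]$ shows that $\MM(\mcE)$ is a free left $R[\tau]$-module with basis the coordinate projections $e_1,\dots,e_d$. In this basis, $t$ acts by right multiplication by the matrix $\mcE_t\in\Mat_d(R)[\tau]$.

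Consequently, an $R[\tau]$-linear map $\MM(\mcE')\to\MM(\mcE)$ is the same as right multiplication by a matrix $F\in\Mat_{d'\times d}(R)[\tau]$. Such a map is moreover $R[t]$-linear exactly when $F\,\mcE_t=\mcE'_t\,F$. That is precisely the condition for $F$, read as a map $\GG_a^d\to\GG_a^{d'}$, to be a morphism $\mcE\to\mcE'$ of $t$-modules.

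One checks that this $F$ is the map that induces the given morphism of motives, and that the correspondence is inverse to $f\mapsto f^*$. This gives the bijection on Hom-sets.

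\textbf{Essential image: the two directions.} First, $\MM(\mcE)$ is free of finite rank over $R[\tau]$, so every object in the image has the stated property (with trivial base change).

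Conversely, suppose $M$ is effective and abelian, and $M$ becomes finite free over $R'[\tau]$ after a faithfully flat $R\to R'$.
\begin{enumerate}
\item First assume $M$ is already free over $R[\tau]$ with basis $e_1,\dots,e_d$. Writing $t\cdot e_i=\sum_j a_{ij}e_j$ with $a_{ij}\in R[\tau]$ defines $\mcE_t:=(a_{ij})\in\Mat_d(R)[\tau]$. This is an $\FF_q$-algebra map $\bA\to\Mat_d(R)[\tau]$, and by construction $\MM(\mcE)\cong M$.
\item It remains to check the nilpotence condition $(\partial\mcE_t-\iota(t))^d=0$. Here $\partial\mcE_t$ is the action of $t$ on $M/\tau M\cong R^d$. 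Since $\tau_M$ becomes an isomorphism after inverting $\mathfrak{j}$, the cokernel $M/\tau M$ is a finitely generated $R[t]$-module killed by a power of $\mathfrak{j}=(t-\iota(t))$. One then has to improve this to the sharp bound: the exponent can be taken to be $d$.
\item For general $M$, construct $\mcE$ over $R'$ as above. Use full faithfulness over $R'\otimes_R R'$ to obtain canonical descent data on $\mcE$. These descend by fpqc descent, for affine group schemes together with their $\bA$-action, to an abelian Anderson $t$-module over $R$ whose motive is $M$.
\end{enumerate}

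\textbf{Main obstacle.} I expect the main obstacle to be the control of $M/\tau M$ in step 2. One must show that $M/\tau M$ is projective of rank $d$ over $R$, which identifies it with $\Lie_\mcE$. One must also show that the $\mathfrak{j}$-torsion bound is sharp enough to give $(\partial\mcE_t-\iota(t))^d=0$ rather than only some power. My first attempt would be to reduce to $R$ a field via local freeness and base change. Over a field, $t-\iota(t)$ acts nilpotently on the $d$-dimensional space $M/\tau M$, and a nilpotent endomorphism of a $d$-dimensional space satisfies $N^d=0$ by Cayley--Hamilton. I would then handle non-reduced $R$ by an induction or by appealing to Hartl's argument.
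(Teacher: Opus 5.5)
The paper gives no argument beyond the citation of Hartl. Your contravariance and full-faithfulness parts are the standard argument and are correct.

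The genuine gap is step~2 of the essential image, and an induction over nilpotents cannot close it. With the definitions in this paper, effectivity only asks that $\tau_M$ become an isomorphism after inverting $\mathfrak{j}$. Under that definition, the bound $(\partial\mcE_t-\iota(t))^d=0$ does not follow from the hypotheses when $R$ is non-reduced. Here is a counterexample.
\begin{itemize}
\item Let $R=\FF_q[\epsilon]/(\epsilon^2)$ with $\iota(t)=0$, and let $M=R[\tau]$ with $t$ acting by right multiplication by $\iota(t)+\epsilon+\tau$.
\item Since $t^n\cdot 1=\tau^n+(\text{lower terms})$, $M$ is free over $R[t]$ on $1$.
\item $\tau_M$ is multiplication by $t-\iota(t)-\epsilon=(t-\iota(t))\bigl(1-\epsilon(t-\iota(t))^{-1}\bigr)$, which is invertible once $\mathfrak{j}$ is inverted.
\item So $M$ is an effective abelian $t$-motive, free of rank one over $R[\tau]$, and it lies in the class described by the statement.
\item Suppose $\MM(\mcE')\cong M$. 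Comparing with $\operatorname{coker}\tau_M\cong R[t]/(t-\iota(t)-\epsilon)\cong R$ forces $\mcE'$ to have dimension $1$ and $\partial\mcE'_t=\iota(t)+\epsilon$.
\item Then $\partial\mcE'_t-\iota(t)=\epsilon\neq0$, so $\mcE'$ is not an Anderson $t$-module, and $M$ is not in the essential image.
\end{itemize}

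So your plan can only succeed after the statement is adjusted. If $R$ is reduced, your Cayley--Hamilton idea works directly: $\operatorname{coker}\tau_M$ is projective of rank $d$, and a nilpotent endomorphism of such a module over a reduced ring has characteristic polynomial $X^d$. This covers the fields $K$, $\conj K$, $K^{\sep}$ actually used in the paper. For general $R$, the target category must require $\operatorname{coker}\tau_M$ to be annihilated by $\mathfrak{j}^d$, with $d=\operatorname{rk}_R\operatorname{coker}\tau_M$, as in Hartl's formulation; step~2 then becomes a tautology.

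There are two smaller problems in the same part. First, you replaced the hypothesis ``finitely generated over $R'[\tau]$'' by ``finite free over $R'[\tau]$'' without justification. Second, the object produced by fpqc descent in step~3 is a group scheme that is only fpqc-locally isomorphic to $\GG_a^d$. That is not an Anderson $t$-module in this paper's sense, which is a homomorphism $\bA\to\Mat_d(R)[\tau]$ on $\GG_{a,R}^{\oplus d}$ itself. You must either allow such forms, as Hartl does, or restrict the essential image to motives that are already free over $R[\tau]$.
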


Let $G_K:=\Gal(K^\sep/K)$. If $M$ is an abelian $t$-motive over $K$ and $\mfl$ is a finite prime of $A$, write $K_\mfl$ for the completion of $K$ at $\mfl$ and $H^1_\mfl(M,K_\mfl)$ for the $\mfl$-adic realization of $M$ with coefficients in $K_\mfl$. We denote the associated continuous Galois representation by
$$\rho_{M,\mfl}:G_K\longrightarrow \GL_{K_\mfl}\left(H^1_\mfl(M,K_\mfl)\right).$$

See \cite[\S 3.5]{hartl2020pinks} or \cite[\S 2.2.2]{Ye26} for details.
\subsection{Goss $L$-series and Carlitz shifts}
\label{subsec:L-series-shift}
We use the definition of Goss $L$-series from \cite[\S 3]{Ye26}. Let $M$ be an abelian $t$-motive over $K$. After removing a finite set $S$ of finite places, for each monic irreducible $\mfp\in A$ with $\mfp\notin S$ and for each prime $\mfl$ with $\mfl\neq \mfp$, let $$P_\mfp(M,X):=\det\left(X-\rho_{M,\mfl}(\Frob_\mfp)\mid H^1_\mfl(M,K_\mfl)^{I_\mfp}\right)$$ and let $$Q_\mfp(M,X):=\det\left(1-X\rho_{M,\mfl}(\Frob_\mfp)\mid H^1_\mfl(M,K_\mfl)^{I_\mfp}\right)$$ be the reciprocal polynomial of $P_\mfp(M,X)$. Thus, for every integer $s$, we defines $$L_S(M,s):=\prod_{\mfp\notin S}Q_\mfp(M,\mfp^{-s})^{-1},$$ where $\mfp$ also denotes its monic generator. If $G$ is an abelian Anderson $t$-module and $M=\MM(G)$, then the identity (3.1) in \cite[\S 3.1]{Ye26} gives \begin{equation}\label{eq:Ye26-module-motive-L}
L_S(M,s)=L_S(G^\vee,s).
\end{equation}
Thus all $L$-series of $t$-modules in this paper are understood through this convention.

\begin{thm}\label{thm:car-sh}
Let $G$ be an abelian Anderson $t$-module over $K$, put $M=\MM(G)$, and let $n\geq0$. Suppose that $G_n$ is an abelian Anderson $t$-module with
$$\MM(G_n)\cong M\otimes \MM(C)^{\otimes n}.$$

Then, for every finite set $S$ containing the bad places of both sides, $$L_S(G_n^\vee,0)=L_S(G^\vee,n).$$
\end{thm}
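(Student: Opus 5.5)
The plan is to compare Euler factors prime by prime. Since
$$L_S(G_n^\vee,0)=L_S\bigl(\MM(G_n),0\bigr)=\prod_{\mfp\notin S}Q_\mfp\bigl(\MM(G_n),1\bigr)^{-1}$$
and
$$L_S(G^\vee,n)=\prod_{\mfp\notin S}Q_\mfp(M,\mfp^{-n})^{-1}$$
by \eqref{eq:Ye26-module-motive-L}, it suffices to prove the local identity
$$Q_\mfp\bigl(M\otimes\MM(C)^{\otimes n},X\bigr)=Q_\mfp(M,\mfp^{-n}X)$$
for every $\mfp\notin S$. Evaluating this at $X=1$ and taking the product over $\mfp\notin S$ gives the theorem. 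The isomorphism $\MM(G_n)\cong M\otimes\MM(C)^{\otimes n}$ over $K$ induces $G_K$-equivariant isomorphisms of $\mfl$-adic realizations, so $Q_\mfp(\MM(G_n),X)$ may be computed on $M\otimes\MM(C)^{\otimes n}$.

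The first step is the tensor compatibility of realizations. I expect
$$H^1_\mfl\bigl(M\otimes\MM(C)^{\otimes n},K_\mfl\bigr)\cong H^1_\mfl(M,K_\mfl)\otimes_{K_\mfl}H^1_\mfl(\MM(C),K_\mfl)^{\otimes n}$$
as $G_K$-representations. This holds because the $\mfl$-adic realization is the Tate module of the motive (the $\tau$-invariants of $M\otimes K^\sep[t]^\wedge_\mfl$), and this functor is a tensor functor. See \cite[\S 3.5]{hartl2020pinks}.

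The second step uses the standard description of the Carlitz character. The Carlitz motive has rank $1$ and good reduction at every finite $\mfp$, so its realization is unramified at $\mfp\neq\mfl$. On the dual Tate module (the convention built into $H^1$ here), $\Frob_\mfp$ acts by the scalar $\mfp^{-1}$, where $\mfp$ is viewed in $\bA=\FF_q[t]$ via $\theta\mapsto t$. Indeed, on the Tate module $\Frob_\mfp$ acts by $\mfp(t)$, since $C_\mfp\equiv\tau^{\deg\mfp}$ modulo $\mfp$. Care is needed with the identification of $\theta$ and $t$ in the variable $\mfp^{-s}$, and with the sign/dual convention; both should be read off from \cite[\S 3]{Ye26}. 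This conventions check is the main point where an error could creep in.

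The third step is to handle inertia. Because $\MM(C)^{\otimes n}$ is unramified at $\mfp$, inertia $I_\mfp$ acts trivially on that factor, so
$$\bigl(V\otimes\chi^{n}\bigr)^{I_\mfp}=V^{I_\mfp}\otimes\chi^{n},\qquad V=H^1_\mfl(M,K_\mfl),$$
where $\chi$ denotes the Carlitz character. On this space $\Frob_\mfp$ acts as $\rho_{M,\mfl}(\Frob_\mfp)\cdot\mfp^{-n}$. Therefore
$$\det\bigl(1-X\mfp^{-n}\rho_{M,\mfl}(\Frob_\mfp)\mid V^{I_\mfp}\bigr)=Q_\mfp(M,\mfp^{-n}X),$$
which is the local identity. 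This also shows that the coefficients are independent of $\mfl$, since those of $M$ are. Finally, the products converge in $K_\infty$ for the relevant values, as in \cite{Ye26}, so the termwise identification gives the equality of $L$-values.
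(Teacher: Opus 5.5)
Your proposal is correct and follows essentially the same route as the paper. Both compare Euler factors prime by prime, using the tensor compatibility of $\mfl$-adic realizations and the fact that $\Frob_\mfp$ acts on the Carlitz realization by $\mfp^{-1}$, to get $Q_\mfp(M\otimes\MM(C)^{\otimes n},X)=Q_\mfp(M,\mfp^{-n}X)$, then evaluate at $X=1$. The one difference is minor: you handle inertia using only that the Carlitz character is unramified, whereas the paper uses good reduction of all three motives outside $S$, so your local identity also holds at the bad primes of $M$.
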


\begin{proof}
Put $\sfC:=\MM(C)$. After enlarging $S$, we may assume that $M$, $\sfC$,
and $M\otimes\sfC^{\otimes n}$ have good reduction at every $\mfp\notin S$.
For such $\mfp$, the inertia subgroup $I_\mfp$ acts trivially on the relevant $\mfl$-adic realizations; hence $H^1_\mfl(*,K_\mfl)^{I_\mfp} = H^1(*,K_\mfl)$, where $*\in \{M,\sfC,M\otimes \sfC^{\otimes n}\}$. Moreover, the compatibility of tensor product with $\mfl$-adic realization gives $$\rho_{M\otimes\sfC^{\otimes n},\mfl}(\Frob_\mfp)=\rho_{M,\mfl}(\Frob_\mfp)\otimes\rho_{\sfC,\mfl}(\Frob_\mfp)^{\otimes n}.$$
The Carlitz motive $\sfC=\MM(C)$ has the local factor at $\mfp$ given by $Q_\mfp(\sfC,X) = 1-\mfp^{-1}X$. Let $\alpha_1,\ldots,\alpha_d$ be the eigenvalues of $\rho_{M,\mfl}(\Frob_\mfp)$ on $H^1_\mfl(M,K_\mfl)$, counted with multiplicity. Then $$Q_\mfp(M,X)=\prod_{i=1}^d(1-\alpha_iX).$$
The eigenvalues of $\rho_{M\otimes\sfC^{\otimes n},\mfl}(\Frob_\mfp)$ are thus $\alpha_i\mfp^{-n}$ for $1\leq i\leq d$, and hence $$Q_\mfp(M\otimes\sfC^{\otimes n},X)=\prod_{i=1}^d(1-\alpha_i\mfp^{-n}X)=Q_\mfp(M,\mfp^{-n}X).$$
In particular, the local factor of $M\otimes \sfC^{\otimes n}$ at $s=0$ is  $$Q_\mfp(M\otimes\sfC^{\otimes n},1)^{-1}=Q_\mfp(M,\mfp^{-n})^{-1},$$ which is exactly the local factor of $M$ at $s=n$. Thus, by the definition of the $L$-series, $$L_S(M\otimes\sfC^{\otimes n},0)=\prod_{\mfp\notin S}Q_\mfp(M\otimes\sfC^{\otimes n},1)^{-1}=\prod_{\mfp\notin S}Q_\mfp(M,\mfp^{-n})^{-1}=L_S(M,n).$$
Since $\MM(G_n)\cong M\otimes\sfC^{\otimes n}$, this is $$L_S(\MM(G_n),0)=L_S(M,n).$$ Finally, applying \eqref{eq:Ye26-module-motive-L} to $G_n$ and to $G$ gives $$L_S(G_n^\vee,0)=L_S(G^\vee,n).$$
\end{proof}

\begin{rmk}\label{rmk:not}
The notation in \cite{GN24,GN25} is slightly different from the convention we use here. Their $L$-series are written with local factors $\det(1-X\Frob^{-1})$. Under our convention, the Carlitz motive is $\sfC=\MM(C)$ and its Frobenius is acting by $\mfp^{-1}$. Thus, tensoring by $\sfC$ in our notation corresponds to tensoring by the dual of the Carlitz motive in their convention. 
\end{rmk}

We now briefly recall the definitions of twisted $L$-series; one can also see \cite[\S 3]{Ye26}. Let $\rho:G_K\to \GL(V_\rho)$ be an Artin representation with coefficients in $\conj{\FF}_q$. For a finite prime $\mfp$ of $A$, let $I_\mfp$ be the inertia subgroup and let $\Frob_\mfp$ be the arithmetic Frobenius. Define $$Q_\mfp(\rho,X):= \det\left(1-X\rho(\Frob_\mfp)\mid V_\rho^{I_\mfp}\right).$$

For a finite set $S$ containing the ramified primes of $\rho$, we set
$$L_S(\rho,s):=\prod_{\mfp\notin S}Q_\mfp(\rho,\mfp^{-s})^{-1}.$$

Next let $\varphi$ be a Drinfeld module over $K$. Let $V_\mfl(\varphi)$ be its rational $\mfl$-adic Tate module and let $V_\mfl^\vee(\varphi)$ be the dual representation. After choosing an embedding $\FF_q(\rho)\hookrightarrow \overline K_\mfl$, put $$W^\vee_{\varphi,\rho,\mfl}:= \left(V_\mfl^\vee(\varphi)\otimes_{K_\mfl}\overline K_\mfl\right) \otimes_{\overline K_\mfl}\left(V_\rho\otimes_{\FF_q(\rho)}\overline K_\mfl\right).$$

Let $\Theta^\vee_{\varphi,\rho,\mfl}$ be the induced representation of $G_K$ on $W^\vee_{\varphi,\rho,\mfl}$. For every $\mfp\notin S$ and $\mfl\neq\mfp$, define $$Q_\mfp^\vee(\varphi,\rho,X):=\det\left(1-X\Theta^\vee_{\varphi,\rho,\mfl}(\Frob_\mfp)\mid (W^\vee_{\varphi,\rho,\mfl})^{I_\mfp}\right).$$
Then, $$L_S(\varphi^\vee,\rho,s):=\prod_{\mfp\notin S}Q_\mfp^\vee(\varphi,\rho,\mfp^{-s})^{-1}.$$

At primes where $\varphi$ has good reduction and $\rho$ is unramified, if $\alpha_i$ are the eigenvalues of $\rho_{\varphi,\mfl}^\vee(\Frob_\mfp)$ and $\beta_j$ are the eigenvalues of $\rho(\Frob_\mfp)$, then $$Q_\mfp^\vee(\varphi,\rho,X)=\prod_{i,j}(1-\alpha_i\beta_jX).$$
When $S = \varnothing$, we simply write $L$ instead of $L_\varnothing$.

\begin{lem}\label{lem:carL}
Let $S$ contain the ramified primes of $\rho$. Then, for every integer
$s$, we have $$L_S(C^\vee,\rho,s-1)=L_S(\rho,s).$$

Consequently, $L(C^\vee,\rho,s-1)$ and $L(\rho,s)$ differ by a finite product of non-zero algebraic local factors.
\end{lem}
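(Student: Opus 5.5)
We compare local Euler factors prime by prime for $\mfp\notin S$, exactly as in the proof of Theorem~\ref{thm:car-sh}. Then the two products agree factor by factor, and both sides are defined by the same index set $\mfp\notin S$.

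First we determine the eigenvalue of $\Frob_\mfp$ on $V_\mfl^\vee(C)$. The Carlitz module has good reduction at every finite prime. Its Tate module $V_\mfl(C)$ is one-dimensional, and the arithmetic Frobenius $\Frob_\mfp$ acts on it by the scalar $\mfp$, via the Carlitz action $C_\mfp$ on torsion (Hayes' theory / Carlitz cyclotomy). Hence on the dual $V_\mfl^\vee(C)$ it acts by $\mfp^{-1}$. This matches the normalization already used in the proof of Theorem~\ref{thm:car-sh}, where $Q_\mfp(\MM(C),X)=1-\mfp^{-1}X$. Since $H^1_\mfl(\MM(C),K_\mfl)$ is identified with the dual Tate module, the two conventions agree. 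I will cite this identification (e.g.\ from \cite[\S 3]{Ye26}) rather than reprove it.

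Next fix $\mfp\notin S$, so $\rho$ is unramified at $\mfp$ and $C$ has good reduction there. The inertia group $I_\mfp$ then acts trivially on $W^\vee_{C,\rho,\mfl}=V^\vee_\mfl(C)\otimes V_\rho$, so taking invariants does nothing. By the formula recalled before the lemma, with the single eigenvalue $\alpha_1=\mfp^{-1}$ and eigenvalues $\beta_j$ of $\rho(\Frob_\mfp)$,
\[
Q^\vee_\mfp(C,\rho,X)=\prod_j(1-\mfp^{-1}\beta_jX)=Q_\mfp(\rho,\mfp^{-1}X).
\]
Substituting $X=\mfp^{-(s-1)}$ gives $Q^\vee_\mfp(C,\rho,\mfp^{-(s-1)})=Q_\mfp(\rho,\mfp^{-s})$. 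Taking the product over $\mfp\notin S$ then yields $L_S(C^\vee,\rho,s-1)=L_S(\rho,s)$.

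For the consequence, $L(C^\vee,\rho,s-1)=L_S(C^\vee,\rho,s-1)\prod_{\mfp\in S}Q^\vee_\mfp(C,\rho,\mfp^{-(s-1)})^{-1}$, and $L(\rho,s)$ decomposes in the same way. Each local factor at $\mfp\in S$ is the reciprocal of a characteristic-polynomial expression on inertia invariants. Its coefficients lie in $K(\FF_q(\rho))$ (or $\overline K$), so it is algebraic over $K$. It is nonzero because its eigenvalues are roots of unity times powers of $\mfp$, and $\mfp^{-s}$ is not a root of unity times such a power except in degenerate cases I would check: I must make sure $1-\alpha\mfp^{-s}\neq0$. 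For $\rho$ alone the eigenvalues $\beta$ are roots of unity, so $1-\beta\mfp^{-s}=0$ would force $\mfp^{s}=\beta$, which is impossible for $s\neq0$ since $\deg\mfp>0$. For the $C$-twisted factor the analogous relation is $\mfp^{s}=\beta$ as well.

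The main obstacle is the case $s=0$ in the consequence: the factor $1-\beta$ can vanish when $\beta=1$. I would handle this by restricting the second statement to the values of $s$ for which the finite factors are nonzero (in particular $s\geq1$, which is the case used in the corollary), and by noting that the exact identity for $L_S$ holds for all $s$ regardless. The normalization of the Frobenius eigenvalue on the Carlitz module is the only other delicate point, and I would cite it.
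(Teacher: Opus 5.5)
Your proposal is correct and follows the same argument as the paper. You match Euler factors at each $\mfp\notin S$ using that $\Frob_\mfp$ acts on $V_\mfl^\vee(C)$ by $\mfp^{-1}$, so that $Q^\vee_\mfp(C,\rho,X)=Q_\mfp(\rho,\mfp^{-1}X)$, and then substitute $X=\mfp^{-(s-1)}$.

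Your extra check that the factors at $\mfp\in S$ are nonzero only for $s\neq 0$ is a valid refinement that the paper leaves implicit. In fact $C$ is unramified at every $\mfp\neq\mfl$, so the same computation on $V_\mfl^\vee(C)\otimes V_\rho^{I_\mfp}$ shows those finite factors coincide on both sides anyway.
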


\begin{proof}
Let $\mfp\notin S$. The Carlitz module has good reduction at $\mfp$, and the local factor at $\mfp$ is $$Q_\mfp^\vee(C,X)=1-\mfp^{-1}X.$$
Let $\beta_1,\ldots,\beta_n$ be the eigenvalues of $\rho(\Frob_\mfp)$ on $V_\rho$. By the definition of the twisted local factor, $$Q_\mfp^\vee(C,\rho,X)=\prod_{j=1}^n(1-\mfp^{-1}\beta_jX)=Q_\mfp(\rho,\mfp^{-1}X).$$

Evaluating at $X=\mfp^{-(s-1)}$ gives $$Q_\mfp^\vee(C,\rho,\mfp^{-(s-1)})
=Q_\mfp(\rho,\mfp^{-s}).$$

Thus the local factors of $L_S(C^\vee,\rho,s-1)$ and $L_S(\rho,s)$ are equal for all $\mfp\notin S$, and the results now follows.

\end{proof}

\subsection{Artin twists of Drinfeld modules}
\label{subsec:artin-twists-prelim}
We recall the construction of Artin twists of Drinfeld modules defined in \cite{Ye26}. Let $\rho:G_K\longrightarrow \GL_n(\conj{\FF}_q)$ be an Artin representation. Put $$\FF_q(\rho):=\FF_q(\rho(g)_{ij}:g\in G_K,\ 1\leq i,j\leq n)=\FF_{q^d}$$ and set $N:=nd$. Let $V_\rho$ be the corresponding $\FF_{q^d}$-representation of $G_K$. The $K$-vector space $\bD(V_\rho):=(K^{\sep}\otimes_{\FF_q}V_\rho)^{G_K}$ is an \'etale $\tau$-module over $K$ of dimension $N$. The Artin $t$-motive associated to $\rho$ is defined by $\MM(\rho):=\bD(V_\rho)\otimes_K K[t]$, with the diagonal $\tau$-action. If $\varphi$ is a Drinfeld module over $K$, its \emph{Artin-twisted motive} by $\rho$ is $\MM(\varphi,\rho):=\MM(\varphi)\otimes \MM(\rho)$.

For later use we recall a concrete basis for $\MM(\rho)$. Choose an $\FF_q$-basis $\vec{\boldsymbol{\alpha}}=(\alpha_1,\ldots,\alpha_d)^\top$ of $\FF_{q^d}$ and an $\FF_{q^d}$-basis $\bm{w} = (w_1,\cdots,w_n)$ of $V_\rho$. Define $\operatorname{Sol}_K(\rho,\vec{\boldsymbol{\alpha}},\bm{w})$ to be the $K$-vector space of matrices $\vec{\mathbf{x}}\in \Mat_{n\times d}(K^{\sep})$ such that $$g(\vec{\mathbf{x}})\vec{\boldsymbol{\alpha}}^{(\ell)}=\rho^{(\ell)}(g)^{-1}\vec{\mathbf{x}}\vec{\boldsymbol{\alpha}}^{(\ell)}$$ for all $g\in G_K$ and $0\leq \ell\leq d-1$, where $\rho^{(\ell)}$ denotes the $\ell$-th Frobenius twist of $\rho$. We often write $\operatorname{Sol}_K(\rho)$ when the bases $\vec{\boldsymbol{\alpha}}$ and $\bm{w}$ are fixed. This is an \'etale $\tau$-module over $K$ of dimension $N$, and $$\MM(\rho)\cong \operatorname{Sol}_K(\rho)\otimes_K K[t].$$
An element $\vec{\mathbf{u}}\in \operatorname{Sol}_K(\rho)$ is called a \emph{fundamental solution} if all its entries are linearly independent over $\FF_q$. Such elements exist, and if $\vec{\mathbf{u}}$ is fundamental, then $\vec{\mathbf{u}},\vec{\mathbf{u}}^{(1)},\ldots,\vec{\mathbf{u}}^{(N-1)}$ is a $K$-basis of $\operatorname{Sol}_K(\rho)$.

We treat each solution $\vec{\mathbf{x}}\in \operatorname{Sol}(\rho)$ as a column vector in $(K^\sep)^N$. Let $M(\vec{\mathbf{u}}):=[\vec{\mathbf{u}},\vec{\mathbf{u}}^{(1)},\ldots,\vec{\mathbf{u}}^{(N-1)}]$. There are unique $f_0,\ldots,f_{N-1}\in K$ such that
$$\vec{\mathbf{u}}^{(N)}=f_0\vec{\mathbf{u}}+f_1\vec{\mathbf{u}}^{(1)}+\cdots+f_{N-1}\vec{\mathbf{u}}^{(N-1)}.$$
With respect to the fundamental basis above, the $\tau$-action on $\MM(\rho)$ is represented by $$\Phi_\rho(\vec{\mathbf{u}})=\begin{pmatrix}
0 & \cdots & 0 & f_0\\
1 & \cdots & 0 & f_1\\
\vdots & \ddots & \vdots & \vdots\\
0 & \cdots & 1 & f_{N-1}
\end{pmatrix}
=M(\vec{\mathbf{u}})^{-1}M(\vec{\mathbf{u}})^{(1)}.
$$

\begin{thm}[Ye {\cite[\S 4]{Ye26}}]\label{thm:AT}
Let $\varphi:\bA\to K[\tau]$ be a Drinfeld module of rank $r$ given by 
\begin{equation}
	\varphi_t=\theta+a_1\tau+\cdots+a_r\tau^r,\qquad a_r\neq 0,
\end{equation}
and let $\rho:G_K\to \GL_n(\conj{\FF}_q)$ be an Artin representation. Then we have
\begin{enumerate}
\item There exists an abelian Anderson $t$-module $\mcE=\mcE(\varphi,\rho)$ over $K$ of dimension $N=nd$ and rank $rN$ such that $\MM(\mcE)\cong \MM(\varphi,\rho):=\MM(\varphi)\otimes \MM(\rho)$.
\item Fix a fundamental solution $\vec{\mathbf{u}}\in \operatorname{Sol}_K(\rho)$ and set
$\Psi_\rho:=(\Phi_\rho(\vec{\mathbf{u}})^\top)^{-1}$, the Anderson $t$-module $\mcE$ has a model over $K$ given by $$\mcE_t=\theta I_N+a_1\Psi_\rho\tau+a_2\Psi_\rho\Psi_\rho^{(1)}\tau^2+\cdots
+a_r\Psi_\rho\Psi_\rho^{(1)}\cdots\Psi_\rho^{(r-1)}\tau^r.$$
\item Set $P = P(\vec{\mathbf{u}}) = (M(\vec{\mathbf{u}})^\top)^{-1}$. Then, $\mcE_t = P^{-1}\varphi^{\oplus N}_tP$. In particular, $\mcE$ is isomorphic to $\varphi^{\oplus N}$ over $K^{\sep}$ and $\mcE$ is hence uniformizable.
\item There exists a finite set $S$ of places such that $$L_S(\mcE^\vee, k) = N_{K_\infty(\rho)/K_\infty}(L_S(\varphi^\vee,\rho,k))$$ for all non-negative integers $k$.
\end{enumerate}

\end{thm}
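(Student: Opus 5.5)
We would prove parts (2) and (3) first by direct computation, then deduce (1) by Galois descent, and finish with (4) by comparing $\mfl$-adic realizations.

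For (2) and (3) we \emph{define} $\mcE_t$ by the formula in (2) and check that $P^{-1}\varphi_t^{\oplus N}P$ equals it. Since $\theta I_N$ commutes with $P$ and $\tau^iP=P^{(i)}\tau^i$, we have
$$P^{-1}\varphi^{\oplus N}_tP=\theta I_N+\sum_{i=1}^r a_iP^{-1}P^{(i)}\tau^i.$$
The key identity is $P^{-1}P^{(1)}=\Psi_\rho$. From $\Phi_\rho=M(\vec{\mathbf u})^{-1}M(\vec{\mathbf u})^{(1)}$ we get $M^{(1)\top}=\Phi_\rho^\top M^\top$. Hence
$$P^{-1}P^{(1)}=M^\top (M^{(1)\top})^{-1}=M^\top (M^\top)^{-1}(\Phi_\rho^\top)^{-1}=\Psi_\rho .$$
Telescoping then gives $P^{-1}P^{(i)}=\Psi_\rho\Psi_\rho^{(1)}\cdots\Psi_\rho^{(i-1)}$. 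The $f_j$ lie in $K$, since $\vec{\mathbf u}^{(N)}\in\operatorname{Sol}_K(\rho)$ and the $\vec{\mathbf u}^{(j)}$ form a $K$-basis. Therefore $\Psi_\rho$ has entries in $K$: indeed $\det\Phi_\rho=\pm f_0\neq 0$, because $M(\vec{\mathbf u})$ is invertible. So $\mcE_t\in\Mat_N(K)[\tau]$ and $\partial\mcE_t=\theta I_N$, which makes $\mcE$ an Anderson $t$-module over $K$. Since $\mcE$ is isomorphic to $\varphi^{\oplus N}$ over $K^\sep$ via $P$, it is abelian of rank $rN$. It is uniformizable because $\varphi^{\oplus N}$ is, and $\exp_\mcE=P^{-1}\exp_{\varphi^{\oplus N}}P$.

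For (1), we compare two $K$-forms of $\MM(\varphi)^{\oplus N}\otimes K^\sep$. The isomorphism $P$ identifies $\MM(\mcE)\otimes_K K^\sep$ with $\MM(\varphi)^{\oplus N}\otimes K^\sep$, and the Galois descent datum is the cocycle $g\mapsto g(P)P^{-1}$. By the defining equation of $\operatorname{Sol}_K(\rho)$, this cocycle is expressed through the matrices $\rho^{(\ell)}(g)$, $0\le\ell\le d-1$. On the other side, $\MM(\rho)\otimes K^\sep\cong K^\sep[t]\otimes_{\FF_q}V_\rho$ is trivialized using the bases $\vec{\boldsymbol\alpha},\bm w$, so $\MM(\varphi)\otimes\MM(\rho)$ carries the same descent datum. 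Both are $K$-forms with identical descent data, so they are isomorphic over $K$. The isomorphism respects $\tau$ because $P$ intertwines the $\tau$-actions. An alternative route is to exhibit the $K[t]$-basis $\tau^j\otimes\vec{\mathbf u}$-type elements and match the $\tau$-matrices directly.

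For (4), we use the identity (3.1) and (1): the $\mfl$-adic realization of $\MM(\mcE)$ is $H^1_\mfl(\varphi)\otimes_{\FF_q}V_\rho$, where $V_\rho$ is viewed as an $N$-dimensional $\FF_q$-representation. After extending scalars to $\overline K_\mfl$, the factor $\FF_{q^d}\otimes_{\FF_q}\overline K_\mfl$ splits along the $d$ embeddings. This gives $\bigoplus_{\ell=0}^{d-1}V^\vee_\mfl(\varphi)\otimes\rho^{(\ell)}$, so at good unramified $\mfp$,
$$Q_\mfp(\MM(\mcE),X)=\prod_{\ell=0}^{d-1}Q^\vee_\mfp(\varphi,\rho,X)^{(\ell)} .$$
Here the superscript $(\ell)$ means applying $\Frob^\ell$ to the $\FF_{q^d}$-coefficients. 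Taking the product over $\mfp\notin S$ at $X=\mfp^{-k}$, and noting that $\Gal(K_\infty(\rho)/K_\infty)$ acts through these coefficient Frobenii, gives the norm identity.

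The main obstacle we expect is this last bookkeeping step, together with the descent in (1). Arithmetic versus geometric Frobenius, the transpose and inverse in $\Psi_\rho$, and dualization all have to be matched so that one obtains exactly $\rho$, not $\rho^{-1}$ or its contragredient, and so that the conjugates assemble into $N_{K_\infty(\rho)/K_\infty}$. We would first check everything for $n=1$ with $\rho$ a character, and then argue in general.
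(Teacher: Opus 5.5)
\textbf{Parts (2)--(4).} Your argument for (2)--(3) is correct and complete. The identity $P^{-1}P^{(1)}=\Psi_\rho$, the telescoping to $P^{-1}P^{(i)}$, the fact that $\Psi_\rho\in\GL_N(K)$, and the transport of $\exp$ by $P$ are exactly what is needed.

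Part (4) is right in outline. It is the same Frobenius-eigenvalue comparison the paper uses in Theorem~\ref{thm:car-sh}. The convention issue you worry about resolves itself. The realization in force is the covariant $\tau$-invariants one: it sends $\MM(\varphi)$ to $V_\mfl^\vee(\varphi)$ and gives $Q_\mfp(\sfC,X)=1-\mfp^{-1}X$. Applied to $\MM(\rho)\otimes_K K^\sep\cong K^\sep[t]\otimes_{\FF_q}V_\rho$, it yields $V_\rho\otimes_{\FF_q}K_\mfl$ with $G_K$ acting through $\rho$, not $\rho^\vee$. You should also record that $Q^\vee_\mfp(\varphi,\rho,X)$ has coefficients in $\FF_{q^d}[\theta]$, independent of $\mfl$. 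This is what lets $\Gal(K_\infty(\rho)/K_\infty)$ act on it by coefficient Frobenius. (The paper itself does not reprove this theorem; it quotes it from \cite[\S4]{Ye26}.)

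\textbf{The gap is in (1).} The sentence ``both are $K$-forms with identical descent data'' is the entire content of (1). Saying ``the cocycle is expressed through the $\rho^{(\ell)}(g)$'' does not establish it. Concretely, $g(\vec{\mathbf u})=R(g)\vec{\mathbf u}$ with $R(g)\in\GL_N(\FF_q)$, and $g(P)P^{-1}=(R(g)^\top)^{-1}$. To decide whether this is the datum of $\MM(\varphi)\otimes\MM(\rho)$ or of $\MM(\varphi)\otimes\MM(\rho^\vee)$, you must push this transpose and inverse through the contravariant functor $\MM$, and you have not done so.

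The clean fix is your ``alternative route'', which is also how the paper builds $\mcE_k$ in Section~3. Let $m$ generate $\MM(\varphi)$ over $K[\tau]$. Let $\bm e$ be the column of $K[t]$-basis vectors of $\MM(\rho)$ given by $\vec{\mathbf u},\vec{\mathbf u}^{(1)},\ldots,\vec{\mathbf u}^{(N-1)}$, so that $\tau\bm e=\Phi_\rho^\top\bm e=\Psi_\rho^{-1}\bm e$. Set $\bm n=(n_1,\ldots,n_N)^\top:=m\otimes\bm e$. Since $\tau$ acts diagonally,
\[
\tau^\ell\bm n=\bigl(\Psi_\rho\Psi_\rho^{(1)}\cdots\Psi_\rho^{(\ell-1)}\bigr)^{-1}(\tau^\ell m\otimes\bm e),
\]
and therefore
\[
(t-\theta)\bm n=\sum_{\ell=1}^r a_\ell\,(\tau^\ell m\otimes\bm e)=\sum_{\ell=1}^r A_\ell\,\tau^\ell\bm n .
\]
This is exactly relation \eqref{t-action} for the coordinate projections $m_i$ of $\MM(\mcE)$.

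Since $\tau^j\bm e$ differs from $\bm e$ by a matrix in $\GL_N(K)$, the $\tau^j n_i$ with $0\le j<r$ form a $K[t]$-basis of $\MM(\varphi)\otimes\MM(\rho)$, just as the $\tau^j m_i$ do for $\MM(\mcE)$. Hence the $K[\tau]$-linear map $m_i\mapsto n_i$ is $t$-equivariant and carries a basis to a basis. This gives $\MM(\mcE)\cong\MM(\varphi)\otimes\MM(\rho)$ over $K$, proving (1) and the motivic content of (2) together. There is no cocycle and no $\rho$-versus-$\rho^\vee$ ambiguity, because the identification $\MM(\rho)\cong\operatorname{Sol}_K(\rho)\otimes_K K[t]$ already fixes the orientation: the condition $g(\vec{\mathbf x})\vec{\boldsymbol\alpha}^{(\ell)}=\rho^{(\ell)}(g)^{-1}\vec{\mathbf x}\vec{\boldsymbol\alpha}^{(\ell)}$ is precisely $G_K$-invariance in $K^\sep\otimes_{\FF_q}V_\rho$.
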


We call $\mcE = \mcE(\varphi,\rho)$ the \emph{Artin twist} of $\varphi$ by $\rho$.
\subsection{Regulators and Taelman class number formula}
\label{subsec:taelman-gn-input}

Our exposition follows from \cite{ANDTR20,angles2022class} and \cite{taelman2012special}. We use the fixed isomorphism $\iota:\bA\to A$, $t\mapsto\theta$, whenever a $\bA$-module is viewed as an $A$-module.

Let $R$ be a commutative ring and let $M$ be a finitely presented $R$-module. Choose a finite presentation $$R^a\xrightarrow{T}R^b\longrightarrow M\longrightarrow0.$$
The Fitting ideal $\Fitt_R(M)$ is the ideal generated by the $b\times b$ minors of the matrix $T$ if $b\leq a$; if $b>a$, we set $\Fitt_R(M)=0$. This ideal is independent of the chosen presentation. If $M$ is a finite $A$-module, then $\Fitt_A(M)$ is principal, and we denote by $[M]_A$ its monic generator. For a finite $\bA$-module $M$, we use the same notation after applying $\iota$ to the monic generator of $\Fitt_{\bA}(M)$.

Let $V$ be a finite-dimensional $K_\infty$-vector space. An $A$-submodule $M\subset V$ is an $A$-lattice if it is discrete and spans $V$ over $K_\infty$. If $M,N\subset V$ are $A$-lattices, choose $A$-bases $e_1,\ldots,e_n$ of $M$ and $f_1,\ldots,f_n$ of $N$, where $n=\dim_{K_\infty}V$. Let $\gamma:V\to V$ be the $K_\infty$-linear map satisfying $\gamma(e_i)=f_i$. The ratio of covolumes $$[M:N]_A:=\frac{\det(\gamma)}{\sgn(\det(\gamma))}.$$
It does not depend on the choice of bases. The ratio of covolumes satisfies
$$[M:N]_A=[N:M]_A^{-1},\qquad[M_1:M_3]_A=[M_1:M_2]_A[M_2:M_3]_A,$$
and, if $N\subset M$, then $[M:N]_A = [M/N]_A$.

Let $G=(\GG_{a/K}^d,\phi)$ be an abelian $t$-module with $\phi_t\in \Mat_d(A)[\tau]$. We write $$U(G/A):=\{x\in \Lie(G)(K_\infty)\mid \exp_G(x)\in G(A)\}$$ for its \textit{unit module}, and $$H(G/A):=G(K_\infty)/(G(A)+\exp_G(\Lie(G)(K_\infty)))$$
for its \textit{class module}. We further define $$W_G(K_\infty):=\Lie(G)(K_\infty)/(\partial\phi_t-\theta I_d)\Lie(G)(K_\infty).$$
Let $W_G(A)$ be the image of $\Lie(G)(A)$ in $W_G(K_\infty)$.

\subsection{Gezmi{\c s} and Namoijam's $t$-modules $G_k$}
\label{subsec:gn-prelim}
We briefly review the construction of $t$-module $G_{\varphi,k}$ constructed by Gezmi{\c s} and Namoijam in \cite{GN24,GN25} and an algebraic independence result that will be used in the proof of Theorem \ref{thm:int}.

\begin{Def}[Gezmi{\c s}-Namoijam {\cite{GN24,GN25}}]\label{def:Gk}
Let $$\varphi_t=\theta+a_1\tau+\cdots+a_r\tau^r$$ be a Drinfeld module defined over $K$. For each $k\geq1$, we define the $t$-module $G_{\varphi,k}$ by $$G_{\varphi,k}(t)=\theta I_{rk+1}+N_k+B_k\tau,$$ where $$N_k =
\begin{pmatrix}
	0 & \cdots & 0 & \braceover{$rk+1-r$}{1&0& \cdots & 0}   \\
	& \ddots &   & \ddots & \ddots & & \vdots \\
	&        & \ddots & & \ddots & \ddots & 0 \\
	&        &        & 0 & \cdots & 0 & 1 \\
	&        &        &   & 0 & \cdots & 0 \\
	&        &        &   &   & \ddots & \vdots \\
	&        &        &   &   &        & 0
\end{pmatrix}\!
\begin{aligned}
	&\left. \vphantom{\begin{matrix} 0 \\ \ddots \\ \ddots \\ 0 \end{matrix}} \right\} rk+1-r \\
	&\left. \vphantom{\begin{matrix} 0 \\ \ddots \\ 0 \end{matrix}} \right\} r
\end{aligned}
$$ and $$B_k=\begin{pmatrix}
	0&\cdots&\cdots&\cdots&\cdots & \cdots & 0\\
	\vdots& & & & & & \vdots\\
	0&\cdots&\cdots&\cdots&\cdots &\cdots &0\\
	1&0&\cdots&\cdots&\cdots & \cdots & 0\\
	0&\ddots&\ddots&&&&\vdots\\
	\vdots&&1&\ddots & & &\vdots\\
	a_1&\cdots &\cdots&a_r&0&\cdots&0
\end{pmatrix}
\begin{aligned}
	&\left. \vphantom{\begin{matrix} 0 \\ \vdots \\ 0 \end{matrix}} \right\} rk+1-r \\
	&\left. \vphantom{\begin{matrix} 1 \\ 0 \\ \vdots \\ a_1 \end{matrix}} \right\} r.
\end{aligned}$$
When $\varphi$ is clear from the context, we write $G_k$ instead of $G_{\varphi,k}$.
\end{Def}

\begin{eg}\label{eg:Gk-r1}
Following \cite[Example 2.2]{GN24}, let $\mfb\in K^\times$, and let $n\geq1$. The \textit{generalized Carlitz $n$-th tensor power} $C_n^{(\mfb)}=(\GG_{a,L}^n,C_n^{(\mfb)})$ is the $t$-module determined as follows. If $n=1$, then $$C_1^{\mfb}(t) := C^{(\mfb)}(t)=\theta+\mathfrak b\tau.$$

If $n\geq2$, then $$C_n^{(\mfb)}(t):=\begin{pmatrix}
\theta&1&0&\cdots&0\\
0&\theta&1&\ddots&\vdots\\
\vdots&\ddots&\ddots&\ddots&0\\
0&\cdots&0&\theta&1\\
0&\cdots&\cdots&0&\theta
\end{pmatrix}+\begin{pmatrix}
0&0&\cdots&0&0\\
0&0&\cdots&0&0\\
\vdots&\vdots&&\vdots&\vdots\\
0&0&\cdots&0&0\\
\mathfrak b&0&\cdots&0&0
\end{pmatrix}\tau.$$
In particular, $C_n^{(1)}=C^{\otimes n}$, where $C$ is the Carlitz module.
Their $t$-motives satisfies
\begin{equation}\label{eq:Cnb-motive}
\MM(C_{n+1}^{(\mfb)})\cong\MM(C^{(\mfb)})\otimes\MM(C)^{\otimes n};
\end{equation}
see \cite[Example 2.2]{GN24}. In particular,
$C_n^{(\mfb)}$ has dimension $n$ and rank one.

If $r=1$, then $\varphi=C^{(a_1)}$ and Definition \ref{def:Gk} gives $G_{\varphi,k}=C_{k+1}^{(a_1)}$.
\end{eg}

\begin{prop}[Gezmi{\c s}--Namoijam]\label{prop:Gk-mot}
For every $r\geq1$, $$\MM(G_{\varphi,k})\cong\MM(\varphi)\otimes\MM(C)^{\otimes k}.$$
Moreover, $G_{\varphi,k}$ has dimension $rk+1$ and rank $r$.
\end{prop}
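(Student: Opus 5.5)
The plan is to compute $\MM(G_k)$ directly from Definition \ref{def:Gk}. I will exhibit an explicit $K[t]$-basis of $\MM(G_k)$ of size $r$, compute the matrix of $\tau$ in that basis, and match it with the obvious basis of $\MM(\varphi)\otimes\MM(C)^{\otimes k}$. The dimension claim needs no work: $G_k(t)$ is an $(rk+1)\times(rk+1)$ matrix. The nilpotence condition $(\partial G_k(t)-\theta I)^{rk+1}=N_k^{rk+1}=0$ also holds, since $N_k$ is strictly upper triangular. So everything reduces to the motive.

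Write $e_1,\dots,e_{rk+1}\in \MM(G_k)=\operatorname{Hom}_{\FF_q,K}(G_k,\GG_a)$ for the coordinate projections. These form a $K[\tau]$-basis of $\MM(G_k)$. The $t$-action is $t\cdot m=m\circ G_k(t)$, and reading off the rows of $\theta I+N_k+B_k\tau$ gives the following.

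\begin{enumerate}
\item For $1\le i\le rk+1-r$, the $i$-th row of $B_k$ vanishes and $N_k$ shifts by $r$, so $(t-\theta)e_i=e_{i+r}$. Iterating, $e_{s+jr}=(t-\theta)^j e_s$ for $1\le s\le r$, as long as the index stays at most $rk+1$.
\item For the last $r$ rows ($i=rk+1-r+j$, $1\le j\le r$), $N_k$ contributes nothing. For $j<r$, the $B_k$ rows give $(t-\theta)e_{rk+1-r+j}=\tau e_j$. The last row gives $(t-\theta)e_{rk+1}=\sum_{j=1}^r a_j\tau e_j$.
\end{enumerate}

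Combining these, for $1\le j<r$ the element $\tau e_j$ equals $(t-\theta)$ times a coordinate of the form $(t-\theta)^{k-1}e_{j+1}$ (or $(t-\theta)^k e_1$ in the boundary case); I will fix the exact powers carefully. Likewise $a_r\tau e_r=(t-\theta)^{k+1}e_1-\sum_{j<r}a_j\tau e_j$. Consequently $e_1,\dots,e_r$ generate $\MM(G_k)$ over $K[t]$: all $e_i$ are $K[t]$-multiples of them, and $\tau$ of each of them lies back in their $K[t]$-span, so the $K[t]$-span is a $K[\tau]$-submodule containing a $K[\tau]$-basis.

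The step I expect to be the real obstacle is $K[t]$-linear independence of $e_1,\dots,e_r$, together with the precise index bookkeeping so that the exponents of $(t-\theta)$ come out uniformly equal to $k$. For independence I would use a degree argument. Filter by $\tau$-degree, using that $e_i$ are a $K[\tau]$-basis. Multiplication by $(t-\theta)$ either moves $e_i$ to $e_{i+r}$ (no change in $\tau$-degree) or, after the last block, raises the $\tau$-degree by exactly one with leading coefficient involving $a_r\neq0$. Hence a nontrivial relation $\sum f_s(t)e_s=0$ would produce a nonzero leading term in the $K[\tau]$-expansion. Alternatively, I would compare $K$-dimensions of $\MM(G_k)/(t-\theta)^m\MM(G_k)$ for large $m$.

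Having the basis, I compare with the target. In $\MM(\varphi)$, with $K[t]$-basis $1,\tau,\dots,\tau^{r-1}$, we have $\tau\cdot\tau^{j-1}=\tau^j$ and $a_r\tau^r=(t-\theta)-\sum_{j<r}a_j\tau^j$. In $\MM(C)^{\otimes k}$, with generator $c$, we have $\tau c=(t-\theta)^k c$. So the basis $f_j=\tau^{j-1}\otimes c$ of the tensor product satisfies $\tau f_j=(t-\theta)^k f_{j+1}$ for $j<r$ and $a_r\tau f_r=(t-\theta)^{k+1}f_1-\sum_{j<r}a_j\tau f_j$.

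Matching this with the relations for $e_1,\dots,e_r$ (after rescaling the $e_s$ by suitable powers of $(t-\theta)$ if the bookkeeping requires it), the assignment $e_j\mapsto f_j$ is a $K[t]$-isomorphism commuting with $\tau$. This gives $\MM(G_k)\cong\MM(\varphi)\otimes\MM(C)^{\otimes k}$, and hence rank $r$. For $r=1$ this recovers \eqref{eq:Cnb-motive}, which serves as a consistency check on the indices.
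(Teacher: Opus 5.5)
Your proof is correct, but it runs in the opposite direction from the paper's. The paper only cites \cite[\S 2.4]{GN24}. Its own construction of $\mcE_k$ in \S3, with $N=1$ and $A_\ell=a_\ell$, is exactly the case of $G_k$ (cf.\ Remark~\ref{rmk:ana}). That construction starts from $\MM(\varphi)\otimes\MM(C)^{\otimes k}$ and writes down the $K[\tau]$-basis $v_{\ell,j}=\tau^{\ell-1}\otimes(t-\theta)^j c$ ($1\le\ell\le r$, $0\le j\le k-1$) together with $v_{1,k}$. It checks that $t$ acts on this basis through $\theta I+N_k+B_k\tau$, and then reads off the $t$-module via Theorem~\ref{thm:and}.

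You start instead from the coordinate projections of $G_k$ and extract a $K[t]$-basis and the $\tau$-matrix. Your route gives the rank directly. The paper's route is the one needed to \emph{construct} a $t$-module from a motive, as for $\mcE_k$. Under $e_{s+jr}\leftrightarrow v_{s,j}$ the two pictures coincide.

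Your relations hold on the nose. Since $rk+1-r+j=(j+1)+(k-1)r$, you have $\tau e_j=(t-\theta)^k e_{j+1}$ for every $j<r$. So there is no boundary case and no rescaling is needed; rescaling the $e_s$ by powers of $t-\theta$ would in fact destroy generation.

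Each direction has one nontrivial half. For the paper it is $K[\tau]$-independence of the $v_{\ell,j}$. For you it is $K[t]$-independence of $e_1,\dots,e_r$, and a plain $\tau$-degree filtration is too coarse for this. You need a total order, for example the weight $n(k+\tfrac1r)+j+\tfrac{s-1}{r}$ on $\tau^n e_{s+jr}$. Under this weight, $t-\theta$ raises the leading weight by exactly one, with leading coefficient $1$ or $a_r^{q^n}$. Hence $(t-\theta)^m e_s$ has leading weight $m+\tfrac{s-1}{r}$, and these weights are pairwise distinct.

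You can avoid this bookkeeping by combining the two directions:
\begin{enumerate}
\item The $K[t]$-linear map $f_j\mapsto e_j$ commutes with $\tau$ by your relations, so it is $K[\tau]$-linear.
\item It sends $v_{s,j}=(t-\theta)^j f_s$ to $e_{s+jr}$.
\item The $v_{s,j}$ span the source over $K[\tau]$, because their span is $t$-stable by the four relations and contains the $K[t]$-basis $f_1,\dots,f_r$.
\item The $e_i$ form a $K[\tau]$-basis of the target.
\end{enumerate}
So the map carries a spanning set bijectively onto a basis, and is therefore an isomorphism.
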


\begin{proof}
See \cite[\S 2.4]{GN24}. 
\end{proof}

We consider the projection map $$p_k:\Lie_{G_k}(\CC_\infty)\longrightarrow\CC_\infty^r$$ given by $$\bz = (z_1,\ldots,z_{rk+1})^\top\longmapsto(z_{r(k-1)+2},\ldots,z_{rk+1})^\top.$$

\begin{rmk}\label{rmk:trac}
	The entries of $p_k(\bz)$ are \textit{tractable coordinates} of $G_k$ in the sense of \cite[Definition 3.3.1]{CM21}, i.e. the $i$-th
	coordinate is tractable if, for every $a\in\FF_q[t]$, the $i$-th
	coordinate of the Lie action of $a$ on $\bz$ is $a(\theta)z_i$. Indeed, the Lie action of $t$ is given by $$T_k:=\partial G_k(t)=\theta I_{rk+1}+N_k.$$
	By the definition of $N_k$, its last $r$ rows are zero. Hence, for
	each $i=r(k-1)+2,\ldots,rk+1$, the $i$-th coordinate of $T_k\bz$ is
	$\theta z_i$. 
\end{rmk}

Since $\partial G_k(t)-\theta I_{rk+1}=N_k$, one has $$N_k\Lie_{G_k}(\CC_\infty)=\ker(p_k).$$
Thus $p_k$ identifies $$W_{G_k}(\CC_\infty)=\Lie_{G_k}(\CC_\infty)/(\partial G_k(t)-\theta I_{rk+1})\Lie_{G_k}(\CC_\infty)$$ with $\CC_\infty^r$.

As an analogue of \cite[Theorem 1.1]{CP12}, Gezmi{\c s} and Namoijam proved the following result.
\begin{thm}[Gezmi{\c s}--Namoijam {\cite[Theorem 1.3]{GN25}}]\label{thm:Gk-ai}
Let $\varphi$ be a Drinfeld module of rank $r$ over $K$, let $k\geq1$, and let $G_k=G_{\varphi,k}$. Put $F_k:=\Frac(\End_{\conj K}(G_k))$. Suppose that  $\bz_1,\ldots,\bz_m\in\Lie_{G_k}(\CC_\infty)$ satisfy $\exp_{G_k}(\bz_i)\in G_k(\conj K)$ for all $1\leq i\leq m$. If $\bz_1,\ldots,\bz_m$ are linearly independent over $F_k$, then the tractable coordinates of $\bz_1,\cdots,\bz_n$, i.e. all entries of $p_k(\bz_1),\ldots,p_k(\bz_m)$, are algebraically independent over $\conj K$.
\end{thm}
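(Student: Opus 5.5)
The plan is to run the Papanikolas Tannakian machinery, in the form used by Chang--Papanikolas \cite{CP12} for logarithms of Drinfeld modules, with the Carlitz twist carried along. Throughout, $\bz_i\in\Lie_{G_k}(\CC_\infty)$ and $\by_i:=\exp_{G_k}(\bz_i)\in G_k(\conj K)$.

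\textbf{Step 1 (motivic realization of the tractable coordinates).} I would work with the dual $t$-motive $\mathcal{N}$ of $G_k$. By Proposition~\ref{prop:Gk-mot} it is the dual motive of $\varphi$ tensored with the $k$-th tensor power of the dual Carlitz motive; it has rank $r$ over $\conj K[t]$ and a rigid analytic trivialization $\Psi_{\mathcal N}$ built from $\Psi_\varphi$ and $\Omega^k$. For each point $\by_i$ I would form the standard extension $X_i$ of $\mathbf 1$ by $\mathcal N$. Its Frobenius matrix is block triangular, with off-diagonal row read off from $\by_i$. Its rigid analytic trivialization $\Psi_i$ has extra row built from Anderson generating functions of $\bz_i$.

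The key input here is the description of tractable coordinates from \cite[Definition 3.3.1]{CM21} and \cite[\S5]{CCM22}. Since the last $r$ coordinates of the Lie action of $t$ are multiplication by $\theta$ (Remark~\ref{rmk:trac}), the entries of $p_k(\bz_i)$ arise, up to $\conj K$-linear combinations with periods and quasi-periods, as specializations at $t=\theta$ of entries of $\Psi_i$. Hence
$$\conj K\bigl(\Psi_X(\theta)\bigr)\supseteq \conj K\bigl(\text{entries of }p_k(\bz_1),\ldots,p_k(\bz_m)\bigr),$$
where $X:=X_1\oplus\cdots\oplus X_m$, and this field is generated by those entries together with $\conj K(\Psi_{\mathcal N}(\theta))$.

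\textbf{Step 2 (computing the motivic Galois group).} By Papanikolas's theorem \cite{Pap08},
$$\trdeg_{\conj K}\conj K\bigl(\Psi_X(\theta)\bigr)=\dim\Gamma_X.$$
The group $\Gamma_X$ sits in an exact sequence $1\to V\to\Gamma_X\to\Gamma_{\mathcal N}\to1$. Here $V$ is a vector subgroup of $\mathcal N_{\mathrm{B}}^{\oplus m}$, i.e.\ of dimension at most $rm$, stable under $\Gamma_{\mathcal N}$. The quotient $\Gamma_{\mathcal N}$ is computed from \cite{CP12} as the image of (centralizer of $\End(\varphi)$ in $\GL_r$) $\times\,\Gm$ under the twist, with the $\Gm$-factor coming from $\Omega^k$.

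Provided $V$ is the full group of dimension $rm$, I would conclude as follows. The fixed field of $V$ inside $\conj K(\Psi_X(\theta))$ is $\conj K(\Psi_{\mathcal N}(\theta))$, by the Galois correspondence of \cite{Pap08}. So adjoining the $rm$ tractable coordinates raises the transcendence degree by $rm$, which forces them to be algebraically independent over $\conj K$ (and even over $\conj K(\Psi_{\mathcal N}(\theta))$).

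\textbf{Step 3 (main obstacle: $V$ is everything).} This is the step I expect to be hardest. I would follow the Hardouin / Chang--Papanikolas strategy. The quotient $\mathcal N_{\mathrm B}^{\oplus m}/V$ is a representation of $\Gamma_{\mathcal N}$, so by semisimplicity a proper $V$ is cut out by a nonzero $\End(\mathcal N)$-linear functional. Such a functional yields an element $\sum_i e_i\cdot X_i$ (Baer sum with $e_i\in\End_{\conj K}(G_k)$) that splits as an extension, possibly after an isogeny. Translating back, $\sum_i\partial e_i(\bz_i)$ would lie in $\Lie_{G_k}$ of a torsion-plus-period configuration. Enlarging by periods, and using that $F_k$-linear independence of the $\bz_i$ is meant modulo nothing extra, this would give a nontrivial $F_k$-linear relation among the $\bz_i$, contradicting the hypothesis.

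The delicate points are twofold. First, $\End_{\conj K}(G_k)$ coincides with $\End_{\conj K}(\varphi)$ via the tensor structure (morphisms commute with the $C^{\otimes k}$-twist), so that $F_k$ is the same field as the one governing $\Gamma_{\mathcal N}$. Second, the splitting of an extension in the category of dual $t$-motives must correspond to the logarithm combination being in $\Lambda_{G_k}$ plus an algebraic correction. I would handle both using the rank-one Carlitz factor, which contributes a $\Gm$ acting by a nontrivial character on $V$. This excludes the fixed vectors that could otherwise come from periods; this exclusion is exactly why $k\ge1$ allows all of $\mathcal N_{\mathrm B}^{\oplus m}$.
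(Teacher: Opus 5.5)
The paper gives no proof of this statement; it quotes it as a black box from \cite[Theorem 1.3]{GN25}, so your outline has to be judged on its own.

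The general route is the right kind of argument: extensions of $\mathbf 1$ by the dual motive $\mathcal N$ of $G_k$, Papanikolas's equality $\trdeg=\dim\Gamma$, and a Chang--Papanikolas/Hardouin analysis of the unipotent part $V$. Steps 2--3, however, contain a genuine gap. The hypothesis is only that $\bz_1,\dots,\bz_m$ are $F_k$-linearly independent, not that they are independent \emph{modulo} $F_k\Lambda_{G_k}$. Since $0\in G_k(\conj K)$, periods are allowed.

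Suppose $\sum_i(\partial e_i)\bz_i\in\Lambda_{G_k}$ for some $e_i\in\End_{\conj K}(G_k)$ not all zero; for instance, some $\bz_i$ is itself a period. Then the Baer sum $\sum_ie_iX_i$ is attached to the point $0$ and splits. Hence $V$ lies in the kernel of $(x_i)\mapsto\sum_ie_ix_i$ and is proper. Your Step 3 calls exactly this situation a contradiction, but $\sum_i(\partial e_i)\bz_i\in\Lambda_{G_k}$ is not an $F_k$-linear relation among the $\bz_i$, so nothing is contradicted. Consequently ``$V$ is everything'' is false in general. Your claimed independence over $\conj K(\Psi_{\mathcal N}(\theta))$ also fails, since tractable coordinates of periods lie in that field.

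The simplest case already breaks the argument. Take $\varphi=C$, $m=1$, and $\bz_1$ a nonzero period of $G_k=C^{\otimes(k+1)}$. Then $V=0$, and the transcendence of the tractable coordinate (an algebraic multiple of $\tpi^{k+1}$) must come from $\dim\Gamma_{\mathcal N}=1$, not from $V$.

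What is missing is the period contribution and the reduction to it.
\begin{enumerate}
\item Change basis over $F_k$. By \eqref{eq:F-action-tractable} this acts on tractable coordinates through invertible $\conj K$-linear maps, and it preserves algebraicity of $\exp_{G_k}$. Arrange that $\bz_1,\dots,\bz_j$ span $\bigl(\sum_iF_k\bz_i\bigr)\cap F_k\Lambda_{G_k}$ and that $\bz_{j+1},\dots,\bz_m$ are independent modulo $F_k\Lambda_{G_k}$.
\item Extend $\bz_1,\dots,\bz_j$ to an $F_k$-basis of $F_k\Lambda_{G_k}$, which has $r/s$ elements, where $s=[F_k:\FF_q(t)]$.
\item Prove $\dim\Gamma_{\mathcal N}=r^2/s$. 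The Carlitz dual motive lies in the Tannakian category generated by that of $\varphi$ (via $\wedge^r$), so $\Gamma_{\mathcal N}$ is the image of the group of \cite{CP12} under a homomorphism with finite kernel.
\item Prove that the $r\cdot(r/s)$ tractable coordinates of this period basis have transcendence degree $r^2/s$.
\item Only then run your $V$-argument on $\bz_{j+1},\dots,\bz_m$. Count $r^2/s+r(m-j)$ quantities in a field of exactly that transcendence degree.
\end{enumerate}

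Two further points. First, your explanation that the Carlitz $\Gm$ ``excludes periods'' is wrong. A central torus acting through a nontrivial character is what gives $\Gamma_X\cong V\rtimes\Gamma_{\mathcal N}$ with $V$ a $\Gamma_{\mathcal N}$-stable vector group. It cannot stop the extension attached to a period from splitting. It is also present already for $k=0$ in \cite{CP12}, so it is not why $k\ge1$ would give full $V$. Second, your Step 1 claim that $\conj K(\Psi_X(\theta))$ is generated over $\conj K(\Psi_{\mathcal N}(\theta))$ by exactly the tractable coordinates is the technical heart for $G_k$, which has dimension $rk+1>r$. It needs an actual computation with Anderson generating functions, not just Remark~\ref{rmk:trac}.
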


\section{The $t$-module $\mcE_k=\mcE(\varphi,\rho,k)$}\label{sec:Ek-construction}
In this section, we fix a Drinfeld module $\varphi$ and an Artin representation $\rho$ as before. For each $k\geq1$, define the $t$-motive over $K$ by
$$\MM(\varphi,\rho,k):=\MM(\varphi,\rho)\otimes \MM(C)^{\otimes k}.$$
Let $\mcE=\mcE(\varphi,\rho)$ be the Anderson $t$-module of dimension $N$ over $K$ in Theorem \ref{thm:AT} with $\MM(\mcE)\cong \MM(\varphi,\rho)$. Inspired by the construction of $G_n$ in \cite{GN24, GN25}, we will construct an Anderson $t$-module $\mcE_k = \mcE(\varphi,\rho,k)$ over $K$ satisfying
$$\MM(\mcE_k)\cong \MM(\varphi,\rho,k).$$ We use $k$ here instead of $n$ as $n$ was reserved for the dimension of $\rho$.

Write $$\mcE_t=\theta I_N+A_1\tau+\cdots+A_r\tau^r,$$ where $A_i=a_i\Psi_\rho\Psi_\rho^{(1)}\cdots \Psi_\rho^{(i-1)}\in \Mat_N(K)$ for each $1\leq i\leq r$.

Let $m_1,\ldots,m_N$ be the basis of the left $K[\tau]$-module $\MM(\mcE)=\operatorname{Hom}_{\FF_q,K}(\mcE,\GG_a)$ consisting of coordinate projections, i.e. $$m_i: \begin{pmatrix}
x_1\\
\vdots\\
x_N
\end{pmatrix}\mapsto x_i.$$ Then, by the definition of the $K[t]$-module structure on $\MM(\mcE)$, for each $1\leq i\leq N$, we have
\begin{equation}\label{t-action}
	(t-\theta)m_i=\sum_{\ell=1}^r\sum_{j=1}^N (A_\ell)_{ij}\tau^\ell m_j.
\end{equation}

For each $1\leq \ell\leq r,\ 1\leq i\leq N$, we put $$m_{\ell,i}:=\tau^{\ell-1}m_i,$$ so that $\{m_{\ell,i}\}$ is a $K[t]$-basis of $\MM(\mcE)$. Let $\widetilde{m}$ be the standard basis of $\MM(C)^{\otimes k}$ satisfying
\begin{equation}\label{tau-action-on-carlitz}
\tau\widetilde{m}=(t-\theta)^k\widetilde{m}.
\end{equation}
In the tensor product $\MM(\mcE)\otimes \MM(C)^{\otimes k}$, for each $1\leq \ell\leq r,\ 1\leq i\leq N$ and $0\leq j\leq k-1$, we set
$$v_{\ell,i,j}:=m_{\ell,i}\otimes (t-\theta)^j\widetilde{m},$$
and also $$v_{1,i, k}:=m_{1,i}\otimes (t-\theta)^k\widetilde{m}.$$
Note that the $t$-motive $\MM(\mcE)\otimes\MM(C)^{\otimes k}$ has dimension $d_k:=N(rk+1)$ and rank $r_k = rN$ by \cite[Definition 5.5.8, Proposition 5.7.2]{Goss}. It is easy to check these elements form a $K[\tau]$-basis of $\MM(\mcE)\otimes \MM(C)^{\otimes k}$ and for all $1\leq i\leq N$, they satisfy 
\begin{equation}\label{t-theta-actions}
	\begin{cases}
		(t-\theta)v_{\ell,i, j}=v_{\ell, i,j+1}, \quad & 1\leq \ell\leq r,\ 0\leq j\leq k-2,\ \\
		(t-\theta)v_{1,i, k-1}=v_{1,i, k},\quad & \ell = 1,\ j = k-1 \\
		(t-\theta)v_{\ell,i, k-1}=\tau v_{\ell-1,i,0},\quad & 2\leq \ell\leq r,\ j = k-1\\
		(t-\theta)v_{1,i, k}=\sum\limits_{\ell=1}^r\sum\limits_{h=1}^N (A_\ell)_{i h}\tau v_{\ell,h, 0}, \quad & \ell =1, \ j = k.
	\end{cases}
\end{equation}

Indeed, the first two identities are immediate from the $K[t]$-module structure, the third follows from (\ref{tau-action-on-carlitz}), and the last one follows from (\ref{t-action}).

We write matrices in $\Mat_{d_k}(K)$ as $(rk+1)\times (rk+1)$ block matrices with blocks of size $N\times N$. Define two such block matrices $\mcU_k$ and $\mcA_k$ by 
\ \\

$$\mcU_k = \begin{pmatrix}
	0 & \cdots & 0 & \braceover{$rk+1-r$}{I_N&0& \cdots & 0}   \\
	& \ddots &   & \ddots & \ddots & & \vdots \\
	&        & \ddots & & \ddots & \ddots & 0 \\
	&        &        & 0 & \cdots & 0 & I_N \\
	&        &        &   & 0 & \cdots & 0 \\
	&        &        &   &   & \ddots & \vdots \\
	&        &        &   &   &        & 0
\end{pmatrix}\!
\begin{aligned}
	&\left. \vphantom{\begin{matrix} 0 \\ \ddots \\ \ddots \\ 0 \end{matrix}} \right\} rk+1-r \\
	&\left. \vphantom{\begin{matrix} 0 \\ \ddots \\ 0 \end{matrix}} \right\} r
\end{aligned}$$ and $$\mcA_k=
\begin{pmatrix}
0&\cdots&\cdots&\cdots&\cdots & \cdots & 0\\
\vdots& & & & & & \vdots\\
0&\cdots&\cdots&\cdots&\cdots &\cdots &0\\
I_N&0&\cdots&\cdots&\cdots & \cdots & 0\\
0&\ddots&\ddots&&&&\vdots\\
\vdots&&I_N&\ddots & & &\vdots\\
A_1&\cdots &\cdots&A_r&0&\cdots&0
\end{pmatrix}
\begin{aligned}
	&\left. \vphantom{\begin{matrix} 0 \\ \vdots \\ 0 \end{matrix}} \right\} rk+1-r \\
	&\left. \vphantom{\begin{matrix} I_N \\ 0 \\ \vdots \\ A_1 \end{matrix}} \right\} r
\end{aligned}
$$
where the braces indicate the first $rk+1-r$ block rows and the last $r$ block rows. We define
\begin{equation}\label{twist_k}
	\mcE_k(t):=\theta I_{d_k}+\mcU_k+\mcA_k\tau.
\end{equation}

Equivalently, if we order the $K[\tau]$-basis as
$$ \bm{v} :=\bigl(v_{1,0},\ldots,v_{r,0},\ldots,v_{1,k-1},\ldots,v_{r,k-1},v_{1,k}\bigr)^{\top},$$ where $v_{\ell,j}:=(v_{\ell,1,j},\ldots,v_{\ell,N,j})^{\top}$, then (\ref{t-theta-actions}) implies that $$t\cdot\bm{v}=\mcE_k(t) \cdot \bm{v}.$$
Thus we see that $\mcE(\varphi,\rho,k):=\mcE_k$ defined in (\ref{twist_k}) is the $t$-module corresponding to the $t$-motive $\MM(\varphi,\rho,k)$, i.e. $\MM(\mcE_k) = \MM(\varphi,\rho,k)$.

\begin{rmk}\label{rmk:ana}
	It is the analogue of the $t$-module $G_n$ in \cite{GN24, GN25}, with each scalar entry $1$ replaced by $I_N$ and the last-row coefficients $a_i$ replaced by the matrix coefficients $A_i$ of the Artin twist $\mcE$.
\end{rmk}

\begin{cor}\label{cor:Ek-L}
For every $k\geq1$ and every finite set $S$ containing the bad places,
\begin{equation}\label{eq:Ek-L-shift}
L_S(\mcE_k^\vee,0)=L_S(\mcE^\vee,k)
=N_{K_\infty(\rho)/K_\infty}\bigl(L_S(\varphi^\vee,\rho,k)\bigr).
\end{equation}
\end{cor}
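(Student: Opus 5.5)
The corollary is a direct combination of two earlier results, with the construction of $\mcE_k$ supplying the motivic input. I would prove it in two steps.

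\textbf{First equality.} Apply Theorem \ref{thm:car-sh} with $G=\mcE$ and $G_n=\mcE_k$ (so $n=k$). The hypotheses are:
\begin{enumerate}
\item $\mcE$ is an abelian Anderson $t$-module over $K$. This is Theorem \ref{thm:AT}(1).
\item $\mcE_k$ is abelian and $\MM(\mcE_k)\cong \MM(\mcE)\otimes\MM(C)^{\otimes k}$. This is what the construction in \eqref{twist_k} provides. Indeed, $\MM(\mcE_k)=\MM(\varphi,\rho)\otimes\MM(C)^{\otimes k}$, and $\MM(\mcE)\cong\MM(\varphi,\rho)$.
\end{enumerate}
Abelianness of $\mcE_k$ follows because its motive is a tensor product of finite projective $K[t]$-modules, hence finite projective of rank $rN$. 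It is finitely generated over $K[\tau]$ by the explicit basis $\bm v$. With both hypotheses in place, Theorem \ref{thm:car-sh} gives $L_S(\mcE_k^\vee,0)=L_S(\mcE^\vee,k)$ for every $S$ containing the bad places of both sides.

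\textbf{Second equality.} This is exactly Theorem \ref{thm:AT}(4) at the non-negative integer $k$.

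\textbf{Main obstacle: the set $S$.} The only subtle point is the bookkeeping of $S$. Theorem \ref{thm:AT}(4) is stated for \emph{some} finite set $S$, whereas the corollary should hold for any $S$ containing the bad places. I would handle this as follows.
\begin{enumerate}
\item Note that both sides of the identity in Theorem \ref{thm:AT}(4) are products of local factors indexed by the places outside $S$.
\item At each place of good reduction where $\rho$ is unramified, the local factors already match: the eigenvalues of $\rho_{\MM(\varphi,\rho),\mfl}(\Frob_\mfp)$ are the products $\alpha_i\beta_j$, and the norm from $K_\infty(\rho)$ accounts for the Galois conjugates of the $\beta_j$.
\item Therefore enlarging $S$ removes the same factors from both sides, and the identity persists for every larger $S$.
\end{enumerate}
Accordingly, I interpret ``bad places'' as including the bad places of $\varphi$, the ramified primes of $\rho$, and the finitely many places excluded in Theorem \ref{thm:AT}(4). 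With that convention, the two equalities combine to give the corollary.
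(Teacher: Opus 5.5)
Your proposal is correct and follows the paper's proof: the first equality is Theorem \ref{thm:car-sh} applied with $G=\mcE$, $G_n=\mcE_k$ and $n=k$, using $\MM(\mcE_k)\cong\MM(\mcE)\otimes\MM(C)^{\otimes k}$, and the second equality is Theorem \ref{thm:AT}(4) evaluated at $k$. You also check that the identity of Theorem \ref{thm:AT}(4) survives enlarging $S$, because the local factors at good unramified primes already agree and the norm is multiplicative and continuous; the paper leaves this bookkeeping implicit, and your argument for it is sound.
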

\begin{proof}
By construction, $\MM(\mcE_k)\cong\MM(\mcE)\otimes\MM(C)^{\otimes k}$. Theorem \ref{thm:car-sh} gives
$$
L_S(\mcE_k^\vee,0)=L_S(\mcE^\vee,k).
$$
The second equality is Theorem \ref{thm:AT}(4), evaluated at $s=k$.
\end{proof}

\begin{eg}\label{eg:Ek-r1}
Assume that $\varphi$ has rank one, so that $$\varphi_t=\theta+a_1\tau.$$
Then $d_k=N(k+1)$ and the construction in \eqref{twist_k} gives
$$\mcE_k(t)=\begin{pmatrix}
\theta I_N&I_N&0&\cdots&0\\
0&\theta I_N&I_N&\ddots&\vdots\\
\vdots&\ddots&\ddots&\ddots&0\\
0&\cdots&0&\theta I_N&I_N\\
0&\cdots&\cdots&0&\theta I_N
\end{pmatrix}+\begin{pmatrix}
0&0&\cdots&0&0\\
0&0&\cdots&0&0\\
\vdots&\vdots&&\vdots&\vdots\\
0&0&\cdots&0&0\\
A_1&0&\cdots&0&0
\end{pmatrix}\tau,$$
where $A_1=a_1\Psi_\rho$. 
In particular, when $N=1$, this is the $t$-module
$C_{k+1}^{(A_1)}$ from \cite[Example 2.2]{GN24}.
\end{eg}

\section{Transcendence of $L(\mcE_k^\vee,0)$}\label{sec:transcendence-Ek}

In this section, we will prove Theorem \ref{thm:int}, i.e. for a Drinfeld module $\varphi$ defined over $K$ and an Artin representation $\rho:G_K\to \GL_n(\conj \FF_q)$, the special $L$-value $L(\varphi^\vee,\rho,k)$ is transcendental over $K$. The case $k=0$ is true by \cite[Theorem 5.2]{Ye26}, so we only need to deal with positive integers $k$. The main idea is to construct a matrix $\widetilde{\Pi}_k$ with algebraic entries such that $\mcE_k(t) = \widetilde{\Pi}_k^{-1}G_k(t)^{\oplus N}\widetilde{\Pi}_k$. Then, studying the algebraic relations of the logarithms of $\mcE_k$ is equivalent to studying the algebraic relations of logarithms of $G_k^{\oplus N}$, which enbles us to apply the result of Gezmi{\c s} and Namoijam.
\subsection{Some lemmas}
We keep the notation of Subsection \ref{subsec:gn-prelim}; in particular $G_k=G_{\varphi,k}$ and $p_k$ is the projection map that takes $\bz$ to its tractable coordinates. Let $R_{k}:=\End_{\conj K}(G_{k})$.
\begin{lem}\label{lem:end}
	For each $\eta\in\End_{\conj K}(\varphi)$, let $\eta_k$ be the unique endomorphism of $G_{k}$ whose induced endomorphism on the $t$-motive $\MM(G_{k})\cong\MM(\varphi)\otimes_{\conj K[t]}\MM(C)^{\otimes k}$ is $\eta_k^*=\eta^*\otimes\id_{\MM(C)^{\otimes k}}$. Then the map
	$$\End_{\conj K}(\varphi)\longrightarrow R_k,$$ given by $$\eta\longmapsto \eta_k$$ is an isomorphism of rings.
\end{lem}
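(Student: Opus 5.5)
We plan to reduce everything to the $t$-motive side via Anderson's anti-equivalence (Theorem \ref{thm:and}), applied over $\conj K$. Since $\MM(\cdot)$ is contravariant and fully faithful on abelian $t$-modules, we have ring isomorphisms
$$\End_{\conj K}(\varphi)\cong \End(\MM(\varphi))^{\op},\qquad R_k=\End_{\conj K}(G_k)\cong \End(\MM(G_k))^{\op}.$$
Here $\End(\MM(\cdot))$ means $\conj K[t,\tau]$-linear endomorphisms. Because $\eta\mapsto\eta^*$ and $\eta_k\mapsto\eta_k^*$ reverse composition in the same way, it suffices to prove that
$$\End(\MM(\varphi))\longrightarrow \End\bigl(\MM(\varphi)\otimes\MM(C)^{\otimes k}\bigr),\qquad f\longmapsto f\otimes \id$$
is a ring isomorphism. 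Together with Proposition \ref{prop:Gk-mot}, this gives well-definedness: $f\otimes\id$ commutes with $t$ and with the diagonal $\tau$. By full faithfulness it then corresponds to a unique $\eta_k$. That the map is a ring homomorphism is immediate from functoriality of $\otimes$.

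For bijectivity, we use that $\MM(C)^{\otimes k}$ is free of rank one over $\conj K[t]$ with basis $\widetilde m$ and $\tau\widetilde m=(t-\theta)^k\widetilde m$, as in (\ref{tau-action-on-carlitz}). Every $\conj K[t]$-linear endomorphism $g$ of $\MM(\varphi)\otimes\MM(C)^{\otimes k}$ can be written uniquely as $g=f\otimes\id$ with $f$ a $\conj K[t]$-linear endomorphism of $\MM(\varphi)$: write $g(m\otimes\widetilde m)=f(m)\otimes\widetilde m$. Injectivity follows at once.

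For surjectivity, we check that $g$ commutes with $\tau$ if and only if $f$ does. On the left,
$$g(\tau(m\otimes\widetilde m))=g(\tau m\otimes(t-\theta)^k\widetilde m)=(t-\theta)^k f(\tau m)\otimes\widetilde m.$$
On the right,
$$\tau g(m\otimes\widetilde m)=(t-\theta)^k\,\tau f(m)\otimes\widetilde m.$$
Since $\MM(\varphi)$ is torsion-free over $\conj K[t]$, multiplication by $(t-\theta)^k$ is injective. Hence $f(\tau m)=\tau f(m)$, so $f\in\End(\MM(\varphi))$.

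The main point needing care is the passage between $t$-motives and $t$-modules over $\conj K$ rather than over $K$. We invoke Theorem \ref{thm:and} with $R=\conj K$, after base-changing $\varphi$ and $G_k$, and use that $\MM(G_k)\otimes\conj K$ still agrees with the tensor product over $\conj K[t]$ (Proposition \ref{prop:Gk-mot} base-changes). Uniqueness of $\eta_k$ then follows from faithfulness. We do not need fullness of the functor on the motives of arbitrary morphisms, only on the endomorphisms in question.
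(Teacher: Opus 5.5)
Your proposal is correct and follows essentially the paper's argument: reduce via the full faithfulness of Theorem \ref{thm:and} to the motive side, use that $\MM(C)^{\otimes k}$ is free of rank one to write every $\conj K[t]$-linear endomorphism of $\MM(G_k)$ uniquely as $f\otimes\id$, and cancel $(t-\theta)^k$ to see that $g$ commutes with $\tau$ exactly when $f$ does. The only difference is cosmetic: the paper works with representing matrices, reducing $Q^{(1)}(t-\theta)^k\Phi=(t-\theta)^k\Phi Q$ to $Q^{(1)}\Phi=\Phi Q$, whereas you run the same computation basis-free using the torsion-freeness of $\MM(\varphi)$.
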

\begin{proof}
	This is essentially a restatement of \cite[Proposition 4.3]{GN25}. We present the proof here for completeness. Choose a $\conj K[t]$-basis $\bm{m}=(m_1,\ldots,m_r)^\top$ of $\MM(\varphi)$ and write $\tau\bm{m}=\Phi\bm{m}$ with $\Phi\in\Mat_r(\conj K[t])$. Let $\widetilde m_k$ be a $\conj K[t]$-basis of $\MM(C)^{\otimes k}$ satisfying $\tau\widetilde m_k=(t-\theta)^k\widetilde m_k$. Thus, $$\bm{m}_k:=(m_1\otimes\widetilde m_k,\ldots,m_r\otimes\widetilde m_k)^\top$$ is a $\conj K[t]$-basis for $\MM(G_{k})$, and $\tau\bm{m}_k=(t-\theta)^k\Phi\bm{m}_k$.
	
	Let $f$ be a $\conj K[t]$-linear endomorphism of $\MM(\varphi)$, and let $F\in\Mat_r(\conj K[t])$ be determined by $f(\bm{m})=F\bm{m}$. Thus, if $$f(m_i)=\sum_{j=1}^r F_{ij}m_j,$$ then $$(f\otimes\id_{\mcM(C)^{\otimes k}})(m_i\otimes\widetilde m_k)=f(m_i)\otimes\widetilde m_k=\sum_{j=1}^rF_{ij}(m_j\otimes\widetilde m_k).$$
	Thus, $f\otimes\id_{\MM(C)^{\otimes k}}$ is represented by the same matrix $F$ with respect to the basis $\bm{m}_k$.
	
	Since $\tau$ is Frobenius semilinear, the equality $f\tau=\tau f$ is
	equivalent to
	\begin{equation}\label{eq:endomorphism-matrix-psi}
		F^{(1)}\Phi=\Phi F.
	\end{equation}
	On the other hand, let $g$ be a $\conj K[t]$-linear endomorphism of
	$\MM(G_{k})$, and let $Q\in\Mat_r(\conj K[t])$ be determined by
	$g(\bm{m}_k)=Q\bm{m}_k$. Then $g\tau = \tau g$ if and only if
	\begin{equation}\label{eq:endomorphism-matrix-Gk}
		Q^{(1)}(t-\theta)^k\Phi
		=(t-\theta)^k\Phi Q.
	\end{equation}
	Since $(t-\theta)^k$ is a nonzero scalar in the integral domain $\conj K[t]$, equation \eqref{eq:endomorphism-matrix-Gk} is equivalent to $Q^{(1)}\Phi=\Phi Q$. 
	Define $$f_Q(\bm{m}):=Q\bm{m},$$ then we have
	$f_Q\in\End_{\conj K[t,\tau]}(\MM(\varphi))$. The endomorphisms $g$ and
	$f_Q\otimes\id_{\MM(C)^{\otimes k}}$ have the same representing matrix
	$Q$ in the basis $\bm{m}_k$, so we see that $g=f_Q\otimes\id_{\MM(C)^{\otimes k}}$. 
	This shows that $$\End_{\conj K[t,\tau]}(\MM(\varphi))\stackrel{\sim}{\rightarrow}\End_{\conj K[t,\tau]}(\MM(G_{k})),$$ $$f\mapsto f\otimes\id_{\MM(C)^{\otimes k}},$$ is an isomorphism. 
	
	By the full faithfulness of the functor $\mcE\mapsto \MM(\mcE)$ in Theorem
	\ref{thm:and}, this isomorphism gives an
	isomorphism $\End_{\conj K}(\varphi)\longrightarrow
	\End_{\conj K}(G_{k})$, $\eta\longmapsto\eta_k$.
\end{proof}

In particular, $R_{k}$ is a commutative integral domain and we may put $F_{k}:=\Frac(R_{k})$. We now specify the action of $F_{k}$ on $\Lie(G_{k})(\CC_\infty)$.

We first observe that $\partial v$ is invertible for every nonzero $v\in R_{k}$. Indeed, write $v=\eta_k$ with $0\neq\eta\in\End_{\conj K}(\varphi)$. Let $\mu$ be the dual isogeny of $\eta$ such that  $\mu\eta=\varphi_a$ for some $0\neq a\in\bA$. Then we have $\mu_kv=\mu_k\eta_k = G_{k}(a)$.  Set $$N_{k}:=\partial G_{k}(t)-\theta I_{rk+1}.$$ Taking differentials gives $$(\partial\mu_k)(\partial v)=\partial G_{k}(a)=a(\partial G_{k}(t))=a(\theta I_{rk+1}+N_{k}).$$ Since $N_{k}$ is nilpotent, we have $\det\bigl(a(\theta I_{rk+1}+N_{k})\bigr)=a(\theta)^{rk+1}\neq 0$. It follows that $\partial v\in\GL_{rk+1}(\conj K)$.

For $u,v\in R_{k}$ with $v\neq0$, define
\begin{equation}\label{eq:F-action-Lie}
	\frac{u}{v}\cdot\bz:=(\partial v)^{-1}(\partial u)\bz,
\end{equation} for every $\bz\in\Lie_{G_{k}}(\CC_\infty)$.
This action is well-defined and does not depend on the choice of $u$ and $v$.

For every $h\in R_{k}$, we have $hG_{k}(t)=G_{k}(t)h$. Thus, $(\partial h)N_{k}=N_{k}(\partial h)$. Consequently, for every $c=u/v\in F_{k}$, the matrix $\partial_k(c):=(\partial v)^{-1}(\partial u)\in\Mat_{rk+1}(\conj K)$ commutes with $N_{k}$ and therefore preserves $N_{k}\Lie_{G_{k}}(\CC_\infty)=\ker(p_k)$. Thus, $$N_{k}\CC_\infty^{rk+1}=\CC_\infty^{r(k-1)+1}\oplus\{0\}.$$ With respect to the decomposition $$ \CC_\infty^{rk+1}=\CC_\infty^{r(k-1)+1}\oplus\CC_\infty^r,$$ there is consequently a unique matrix $D_k(c)\in\Mat_r(\conj K)$ such that
\begin{equation}\label{eq:F-action-block}
	\partial_k(c)= \begin{pmatrix}
		* & *\\
		0 & D_k(c)
	\end{pmatrix}.
\end{equation}
Equivalently, $D_k(c)$ is characterized by
\begin{equation}\label{eq:F-action-tractable}
	p_k(c\cdot\bz)=D_k(c)p_k(\bz),
\end{equation} for every $\bz\in\Lie_{G_{k}}(\CC_\infty)$.

\begin{lem}\label{lem:Gk-lin}
	Let $k\geq 1$ and keep the notation of Theorem \ref{thm:Gk-ai}. Let $\bz_1,\cdots,\bz_M\in \Lie_{G_{k}}(\CC_\infty)$ satisfy $\exp_{G_{k}}(\bz_i)\in G_{k}(\conj K)$. Choose a maximal subset $\bz_{i_1},\cdots,\bz_{i_m}$ which is linearly independent over $F_{k}$. Then every coordinate of $p_k(\bz_i)$ is a $\conj K$-linear form in the coordinates of $p_k(\bz_{i_1}),\cdots,p_k(\bz_{i_m})$.
\end{lem}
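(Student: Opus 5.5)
By maximality, for each $i$ the family $\bz_{i_1},\ldots,\bz_{i_m},\bz_i$ is linearly dependent over $F_k$. Since $\bz_{i_1},\ldots,\bz_{i_m}$ are independent, the coefficient of $\bz_i$ in any nontrivial relation is nonzero. Dividing by it, we obtain $c_{i,1},\ldots,c_{i,m}\in F_k$ with
$$\bz_i=\sum_{s=1}^m c_{i,s}\cdot\bz_{i_s},$$
where the action of $F_k$ on $\Lie_{G_k}(\CC_\infty)$ is the one defined in \eqref{eq:F-action-Lie}. This step requires that \eqref{eq:F-action-Lie} really defines an $F_k$-vector space structure. That is the content of the well-definedness assertion made just before the lemma: it uses that $R_k$ is commutative (Lemma \ref{lem:end}), that $\partial$ is a ring homomorphism, and that $\partial v$ is invertible for $v\neq 0$. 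Linear (in)dependence over $F_k$ in the statement refers to this structure.

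Next I apply the projection $p_k$. It is $\CC_\infty$-linear, hence additive. By \eqref{eq:F-action-tractable}, $p_k(c\cdot\bz)=D_k(c)\,p_k(\bz)$ with $D_k(c)\in\Mat_r(\conj K)$. The key input is that $\partial_k(c)$ commutes with $N_k$, so it preserves $\ker(p_k)=N_k\Lie_{G_k}(\CC_\infty)$ and has the block upper-triangular shape \eqref{eq:F-action-block}. It follows that
$$p_k(\bz_i)=\sum_{s=1}^m D_k(c_{i,s})\,p_k(\bz_{i_s}).$$
Since all entries of $D_k(c_{i,s})$ lie in $\conj K$, each coordinate of $p_k(\bz_i)$ is a $\conj K$-linear combination of the coordinates of $p_k(\bz_{i_1}),\ldots,p_k(\bz_{i_m})$.

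I do not expect a serious obstacle. The one point to check is that the entries of $\partial_k(c)=(\partial v)^{-1}(\partial u)$ lie in $\conj K$ rather than merely in $\CC_\infty$. This holds because endomorphisms in $R_k=\End_{\conj K}(G_k)$ are defined over $\conj K$, so $\partial u,\partial v\in\Mat_{rk+1}(\conj K)$, and $(\partial v)^{-1}$ has entries in $\conj K$ since $\det(\partial v)\neq 0$ was shown above. The hypothesis $\exp_{G_k}(\bz_i)\in G_k(\conj K)$ is not needed for this linear-algebra statement. It matters only when the lemma is later combined with Theorem \ref{thm:Gk-ai}, which gives algebraic independence of the tractable coordinates of $\bz_{i_1},\ldots,\bz_{i_m}$.
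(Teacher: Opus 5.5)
Your proposal is correct and follows the paper's proof: by maximality you write $\bz_i=\sum_{s} c_{i,s}\cdot\bz_{i_s}$ with $c_{i,s}\in F_k$, then apply $p_k$ using \eqref{eq:F-action-tractable}, which gives $p_k(\bz_i)=\sum_s D_k(c_{i,s})\,p_k(\bz_{i_s})$ with $D_k(c_{i,s})\in\Mat_r(\conj K)$. Your extra remarks are accurate and fill in details the paper leaves implicit: the coefficient of $\bz_i$ in a dependence relation must be nonzero, the entries of $(\partial v)^{-1}\partial u$ lie in $\conj K$, and the hypothesis $\exp_{G_k}(\bz_i)\in G_k(\conj K)$ is not used until Theorem \ref{thm:Gk-ai} is applied.
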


\begin{proof}
	By maximality, for each $1\leq i\leq M$ and $1\leq \mu\leq m$, there exist elements $a_{\mu i}\in F_{k}$ such that $\dis \bz_i=\sum_{\mu=1}^m a_{\mu i}\cdot \bz_{i_\mu}$ in $\Lie_{G_k}(\CC_\infty)$. Applying \eqref{eq:F-action-tractable} to the above $F_{k}$-linear expression gives
	$$p_k(\bz_i)=\sum_{\mu=1}^mp_k(a_{\mu i}\cdot \bz_{i_\mu}) = \sum_{\mu=1}^m D_k(a_{\mu i})p_k(\bz_{i_\mu}).$$ Equivalently, if $p_k(\bz_{i_\mu})=(z_{\mu,1},\ldots,z_{\mu,r})^\top$,  then every coordinate of $p_k(\bz_i)$ lies in $\dis \sum_{\mu=1}^m\sum_{\nu=1}^r \conj K\,z_{\mu,\nu}$. The result now follows.
\end{proof}

\subsection{The transition matrix from $\mcE_k$ to $G_k^{\oplus N}$}
Let $G_k^{\oplus N}$ be the $t$-module given by direct sum, i.e., $$G_k^{\oplus N}(t)=\begin{pmatrix}
G_k(t) & 0 & \cdots & 0\\
0 & G_k(t) & \ddots & \vdots\\
\vdots & \ddots & \ddots & 0\\
0 & \cdots & 0 & G_k(t)
\end{pmatrix},$$
an $N\times N$ block diagonal matrix whose diagonal blocks are $G_k(t)$ and whose off-diagonal blocks are zero. 

We further define the map $$p_{k,N}: \Lie_{G_k^{\oplus N}}(\CC_\infty) = \bigoplus_{\alpha = 1}^N\Lie_{G_k}(\CC_\infty)\to \CC_\infty^{rN}$$ by $$\bz = \begin{pmatrix}
	\bz_1\\
	\vdots\\
	\bz_N
\end{pmatrix}\mapsto \begin{pmatrix}
p_k(\bz_1)\\
\vdots\\
p_k(\bz_N)
\end{pmatrix}.$$

For the comparison of $\mcE_k$ with $G_k^{\oplus N}$, we use the following permutation of coordinates. We first consider the index set $$\mcJ_k:=\{(\ell,j)\mid 1\leq \ell\leq r,\ 0\leq j\leq k-1\}\cup\{(1,k)\}.$$ We also denote $\gamma_{jr+\ell} = (\ell,j)$, and write $$\nu_k:\mcJ_k\stackrel{\sim}{\longrightarrow}\{1,\ldots,rk+1\}$$ with $\nu_k(\ell,j)=jr+\ell$ for all $1\leq \ell\leq r,\ 0\leq j\leq k-1$ and $\nu_k(1,k)=rk+1$. Thus $\mcJ_k$ is the index set of the coordinates of $\Lie(G_k)$ in the above basis.  Using such a notation will be convenient to keep track of coordinates in the proof of Lemma \ref{lem:perG} and Proposition \ref{prop:Gk-tran}.

We define the permutation matrix $$\mathfrak S_k:\bigoplus_{i=1}^N\Lie_{G_k}(\CC_\infty)\longrightarrow\bigoplus_{\gamma\in\mcJ_k}\CC_\infty^N$$ by $$\mathfrak S_k\begin{pmatrix}
	\bx_1\\
	\bx_2\\
	\vdots\\
	\bx_N
\end{pmatrix}=\begin{pmatrix}
	\bx[\gamma_1]\\
	\bx[\gamma_2]\\
	\vdots\\
	\bx[\gamma_{rk+1}]
\end{pmatrix},
$$ where $\bx_i=(x_{i,\gamma_1},\cdots,x_{i,\gamma_{rk+1}})^\top\in \Lie_{G_k}(\CC_\infty)$ for each $1\leq i\leq N$ and $\bx[\gamma] = (x_{1,\gamma},\cdots,x_{N,\gamma})^\top\in \CC_\infty^N$ for each $\gamma\in \mcJ_k$.

Equivalently, if we put $m_k:=rk+1$,$d_k:=Nm_k$ and let $$
\be_1=(1,0,\ldots,0),\quad
\be_2=(0,1,\ldots,0),\quad
\ldots,\quad
\be_{m_k}=(0,\ldots,0,1)$$ be the standard basis for $\Mat_{1\times m_k}(\CC_\infty)$, then we have
$$ \mfS_k=\begin{pmatrix}
\be_1 & 0 & \cdots & 0\\
0 & \be_1 & \cdots & 0\\
\vdots & \vdots & \ddots & \vdots\\
0 & 0 & \cdots & \be_1\\
\be_2 & 0 & \cdots & 0\\
0 & \be_2 & \cdots & 0\\
\vdots & \vdots & \ddots & \vdots\\
0 & 0 & \cdots & \be_2\\
\vdots & \vdots & \ddots & \vdots\\
\be_{m_k} & 0 & \cdots & 0\\
0 & \be_{m_k} & \cdots & 0\\
\vdots & \vdots & \ddots & \vdots\\
0 & 0 & \cdots & \be_{m_k}
\end{pmatrix}
\in \GL_{d_k}(\FF_q),$$
where $0$ denotes the zero row vector in $\Mat_{1\times m_k}(\CC_\infty)$ and each row has $N$ blocks of row vectors in $\Mat_{1\times m_k}(\CC_\infty)$ and hence $\mfS_k$ has size $d_k\times d_k$.

Let $P=P(\vec{\mathbf{u}})=(M(\vec{\mathbf{u}})^\top)^{-1}\in \GL_N(\conj K)$ be as in Theorem \ref{thm:AT} so that $\mcE_t=P^{-1}\varphi_t^{\oplus N}P$. For $1\leq \ell\leq r$, put $$P_\ell:=P^{(\ell-1)}.$$

Let $\Pi_k$ be the following $(rk+1)\times(rk+1)$ block matrix with $N\times N$ blocks given by $$\Pi_k:=\operatorname{diag}(P_1,\cdots, P_r, \cdots, P_1,\cdots, P_r, P_1)\in \GL_{d_k}(K^\sep), $$ where $P_1,\ldots,P_r$ repeat $k$ times. Finally, define $$\widetilde\Pi_k:=\mfS_k^{-1}\Pi_k.$$

\begin{eg}\label{eg:Sk}
Assume $r=2$, $N=2$, and $k=1$. Then, $$\mcJ_1=\{\gamma_1=(1,0),\gamma_2=(2,0),\gamma_3=(1,1)\}.$$

Write $$P_1=P= \begin{pmatrix}
a&b\\
c&d
\end{pmatrix}, \qquad P_2=P^{(1)}=
\begin{pmatrix}
a^q&b^q\\
c^q&d^q
\end{pmatrix}.$$

We have
$$\mfS_1 = \begin{pmatrix}
1 & 0 & 0 & 0 & 0 & 0\\
0 & 0 & 0 & 1 & 0 & 0\\
0 & 1 & 0 & 0 & 0 & 0\\
0 & 0 & 0 & 0 & 1 & 0\\
0 & 0& 1 & 0 & 0 & 0\\
0 & 0 & 0 & 0 & 0 &  1
\end{pmatrix}\qquad\textnormal{ and }\qquad \Pi_1=
\begin{pmatrix}
a&b&0&0&0&0\\
c&d&0&0&0&0\\
0&0&a^q&b^q&0&0\\
0&0&c^q&d^q&0&0\\
0&0&0&0&a&b\\
0&0&0&0&c&d
\end{pmatrix}.$$

Thus, we see that $$\begin{pmatrix}
	\bx_1\\
	\bx_2
\end{pmatrix} = (x_{1,\gamma_1},x_{1,\gamma_2},x_{1,\gamma_3}, x_{2,\gamma_1},x_{2,\gamma_2},x_{2,\gamma_3})^\top$$ and $$ \begin{pmatrix}
\bx[\gamma_1]\\
\bx[\gamma_2]\\
\bx[\gamma_3]
\end{pmatrix} = (x_{\gamma_1,1},x_{\gamma_1,2},x_{\gamma_2,1},x_{\gamma_2,2},x_{\gamma_3,1},x_{\gamma_3,2})^\top$$ satisfy $$\mfS_1 \begin{pmatrix}
\bx_1\\
\bx_2
\end{pmatrix} =\begin{pmatrix}
\bx[\gamma_1]\\
\bx[\gamma_2]\\
\bx[\gamma_3]
\end{pmatrix}.$$

Furthermore, we have $$\widetilde\Pi_1=\mfS_1^{-1}\Pi_1=
\begin{pmatrix}
a&b&0&0&0&0\\
0&0&a^q&b^q&0&0\\
0&0&0&0&a&b\\
c&d&0&0&0&0\\
0&0&c^q&d^q&0&0\\
0&0&0&0&c&d
\end{pmatrix}.$$

\end{eg}
\begin{lem}\label{lem:per}
	Let $P\in \Mat_m(\FF_q)$ be a permutation matrix. Let $\sigma\in S_m$ be the permutation corresponding to $P$ such that $P\be_i=\be_{\sigma(i)}$ for all $1\leq i\leq m$, where $\be_1,\ldots,\be_m$ are the standard column vectors in $\Mat_{m\times1}(\FF_q)$. Let $M\in \Mat_m(\CC_\infty)$ and put $C=PMP^{-1}$. Then, $$C_{\sigma(i),\sigma(j)}=M_{i,j}$$
	for all $1\leq i,j\leq m$.
\end{lem}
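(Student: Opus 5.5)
This is a direct computation with standard basis vectors, so the plan is simply to express matrix entries as bilinear pairings and transport them through $P$. I will write $\be_i^\top$ for the transpose of the standard column vector $\be_i$, so that $X_{a,b}=\be_a^\top X\be_b$ for any $X\in\Mat_m(\CC_\infty)$. Applying this to $C$ at the indices $a=\sigma(i)$ and $b=\sigma(j)$ gives
$$C_{\sigma(i),\sigma(j)}=\be_{\sigma(i)}^\top PMP^{-1}\be_{\sigma(j)}.$$

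The first step is to simplify the right factor. From $P\be_j=\be_{\sigma(j)}$ we get $P^{-1}\be_{\sigma(j)}=\be_j$ immediately.

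The second step is to simplify the left factor. A permutation matrix is orthogonal, so $P^{-1}=P^\top$. I would check this directly: $P^\top P$ has $(a,b)$ entry $(P\be_a)^\top(P\be_b)=\be_{\sigma(a)}^\top\be_{\sigma(b)}=\delta_{\sigma(a),\sigma(b)}=\delta_{a,b}$, using that $\sigma$ is injective. Consequently
$$\be_{\sigma(i)}^\top P=(P^\top\be_{\sigma(i)})^\top=(P^{-1}\be_{\sigma(i)})^\top=\be_i^\top.$$

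Combining the two steps yields $C_{\sigma(i),\sigma(j)}=\be_i^\top M\be_j=M_{i,j}$, as required. None of these steps is a real obstacle. The only point needing care is the orthogonality $P^{-1}=P^\top$, which rests on $\sigma$ being a bijection.
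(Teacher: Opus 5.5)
Your proposal is correct and follows essentially the same route as the paper: write $C_{\sigma(i),\sigma(j)}=\be_{\sigma(i)}^\top PMP^{-1}\be_{\sigma(j)}$, then use $P^{-1}\be_{\sigma(j)}=\be_j$ and $P^\top=P^{-1}$ to reduce to $\be_i^\top M\be_j$. The only difference is that you verify the orthogonality $P^\top P=I$ explicitly, which the paper simply asserts.
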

\begin{proof}
Since $P\be_i=\be_{\sigma(i)}$ and $P^\top = P^{-1}$, we have $P^\top\be_{\sigma(i)} = P^{-1}\be_{\sigma(i)}=\be_i$ for every $1\leq i\leq m$. Hence, for all $1\leq i,j\leq m$, $$C_{\sigma(i),\sigma(j)}=\be_{\sigma(i)}^\top C\be_{\sigma(j)}=\be_{\sigma(i)}^\top PMP^{-1}\be_{\sigma(j)}=\be_i^\top M\be_j=M_{i,j}.$$
This proves the assertion.
\end{proof}

Let $M\in\Mat_{d_k}(\CC_\infty)$ and $m_k=|\mcJ_k|=rk+1$. If we regard $M$
as an $m_k\times m_k$ block matrix whose blocks lie in
$\Mat_N(\CC_\infty)$, then for $\alpha,\beta\in\mcJ_k$ we write
$M[\alpha,\beta]$ for the $(\nu_k(\alpha),\nu_k(\beta))$-block of $M$, and
$M_{(\alpha,i),(\beta,j)}$ denotes the $(i,j)$-entry of $M[\alpha,\beta]$.
If we regard $M$ as an $N\times N$ block matrix whose blocks lie in
$\Mat_{m_k}(\CC_\infty)$, then $M_{(i,\alpha),(j,\beta)}$ denotes the
$(\nu_k(\alpha),\nu_k(\beta))$-entry of the $(i,j)$-block.

\begin{lem}\label{lem:perG}
	Let $\mcU_k$ be the matrix defined in \eqref{twist_k} and $\mcB_k$ be the block matrix defined as follows
	$$\mcB_k=\begin{pmatrix}
		0&0& \cdots&\cdots&\cdots&\cdots & \cdots & 0\\
		\vdots& \vdots & & & & & & \vdots\\
		0&0 & \cdots&\cdots&\cdots&\cdots &\cdots &0\\
		I_N&0& 0&\cdots&\cdots&\cdots & \cdots & 0\\
		0&I_N&\ddots&&&&&\vdots\\
		\vdots&\vdots&\ddots&\ddots & & &&\vdots\\
		0&0&\cdots&I_N& 0 &0&\cdots&0\\
		a_1I_N&a_2I_N& \cdots &a_{r-1}I_N&a_rI_N&0&\cdots&0
	\end{pmatrix}\!
	\begin{aligned}
		&\left. \vphantom{\begin{matrix} 0 \\ \vdots\\ 0 \end{matrix}} \right\} rk+1-r \\
		&\left. \vphantom{\begin{matrix} 0 \\  \vdots\\ \vdots\\  0\\ 0 \end{matrix}} \right\} r
	\end{aligned}
	$$ Then, $$\mfS_k\cdot G_k^{\oplus N}(t)\cdot \mfS_k^{-1}=\theta I_{d_k}+\mcU_k+\mcB_k\tau.$$
\end{lem}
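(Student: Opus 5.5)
We prove the identity by comparing entries with Lemma \ref{lem:per}. Since $\mfS_k$ is a permutation matrix with entries in $\FF_q$, we have $\mfS_k^{(1)}=\mfS_k$. Hence conjugation by $\mfS_k$ commutes with $\tau$:
$$\mfS_k\Bigl(\sum_i X_i\tau^i\Bigr)\mfS_k^{-1}=\sum_i \mfS_kX_i\mfS_k^{-1}\tau^i.$$
Writing $G_k^{\oplus N}(t)=\theta I_{d_k}+N_k^{\oplus N}+B_k^{\oplus N}\tau$, it therefore suffices to prove three separate identities:
$$\mfS_k(\theta I_{d_k})\mfS_k^{-1}=\theta I_{d_k},\qquad \mfS_kN_k^{\oplus N}\mfS_k^{-1}=\mcU_k,\qquad \mfS_kB_k^{\oplus N}\mfS_k^{-1}=\mcB_k.$$
The first is immediate because $\theta I_{d_k}$ is scalar.

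Next we identify the permutation $\sigma$ attached to $\mfS_k$. From the definition, $\mfS_k$ sends the coordinate $x_{i,\gamma}$, which sits in position $(i,\gamma)$ of the $N\times N$ block decomposition of $\bigoplus_i\Lie_{G_k}$, to position $(\gamma,i)$ of the $m_k\times m_k$ block decomposition. In the index notation fixed just before the lemma, this says
$$\mfS_k\be_{(i,\alpha)}=\be_{(\alpha,i)}.$$
So $\sigma(i,\alpha)=(\alpha,i)$. Lemma \ref{lem:per} then gives, for any $M\in\Mat_{d_k}(\CC_\infty)$ and $C=\mfS_kM\mfS_k^{-1}$,
$$C_{(\alpha,i),(\beta,j)}=M_{(i,\alpha),(j,\beta)}.$$

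For $M=X^{\oplus N}$ with $X\in\Mat_{m_k}(\CC_\infty)$, we have $M_{(i,\alpha),(j,\beta)}=\delta_{ij}X_{\nu_k(\alpha),\nu_k(\beta)}$. Hence
$$C[\alpha,\beta]=X_{\nu_k(\alpha),\nu_k(\beta)}\,I_N.$$
In words, conjugating $X^{\oplus N}$ by $\mfS_k$ yields the $m_k\times m_k$ block matrix obtained from $X$ by replacing each scalar entry $x$ with $xI_N$, i.e.\ $X\otimes I_N$ in the other tensor ordering.

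Apply this with $X=N_k$ and $X=B_k$ from Definition \ref{def:Gk}. Replacing each entry $1$ of $N_k$ by $I_N$ gives $\mcU_k$ as displayed in \eqref{twist_k}. Replacing the entries $1$ and $a_\ell$ of $B_k$ by $I_N$ and $a_\ell I_N$ gives exactly $\mcB_k$. This proves the lemma.

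The main obstacle is bookkeeping rather than any conceptual step. One must check that the ordering convention for $\mfS_k$, including the direction of $\sigma$ versus $\sigma^{-1}$ in Lemma \ref{lem:per}, matches the block and row ordering in the lemma's display. I would verify this on Example \ref{eg:Sk}: there $\mfS_1$ maps $(x_{1,\gamma_1},x_{1,\gamma_2},\ldots)$ to $(x_{\gamma_1,1},x_{\gamma_1,2},\ldots)$. If $\sigma$ turned out to be the inverse permutation, applying Lemma \ref{lem:per} with $\mfS_k^{-1}$ yields the same entrywise identity, so the conclusion is unaffected.
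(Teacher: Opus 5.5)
Your proposal is correct and follows the paper's proof essentially step for step. You identify $\mfS_k$ with the bijection $(i,\gamma)\mapsto(\gamma,i)$, use $\mfS_k^{(1)}=\mfS_k$ to move the conjugation past $\tau$, and apply Lemma~\ref{lem:per} to get $(\mfS_kX^{\oplus N}\mfS_k^{-1})[\alpha,\beta]=X_{\nu_k(\alpha),\nu_k(\beta)}I_N$ for $X=N_k,B_k$; the paper does exactly this, except that it lists the nonzero entries of $N_k$ and $B_k$ explicitly via $\nu_k$.

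One correction to your closing remark: the direction of $\sigma$ does matter. If it were reversed, you would obtain this block description for $\mfS_k^{-1}G_k^{\oplus N}(t)\mfS_k$ rather than for $\mfS_kG_k^{\oplus N}(t)\mfS_k^{-1}$. Since you correctly checked that $\mfS_k\be_{(i,\alpha)}=\be_{(\alpha,i)}$, the hedge is simply unnecessary.
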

\begin{proof}
Recall that $m_k=rk+1$ and write
$\mcJ_k=\{\gamma_1,\ldots,\gamma_{m_k}\}$ with the fixed ordering above. For $$\bx=\left(x_{1,\gamma_1},\ldots,x_{1,\gamma_{m_k}},
x_{2,\gamma_1},\ldots,x_{2,\gamma_{m_k}},
\ldots,x_{N,\gamma_1},\ldots,x_{N,\gamma_{m_k}}\right)^\top
\in \Lie(G_k^{\oplus N}),$$ the definition of $\mfS_k$ gives $$\mfS_k\bx=
\left(x_{1,\gamma_1},\ldots,x_{N,\gamma_1},
x_{1,\gamma_2},\ldots,x_{N,\gamma_2},
\ldots,x_{1,\gamma_{m_k}},\ldots,x_{N,\gamma_{m_k}}\right)^\top
\in \bigoplus_{\gamma\in\mcJ_k}\CC_\infty^N.$$
Thus $\mfS_k$ is the permutation matrix attached to the bijection $$
\sigma:\{1,\ldots,N\}\times\mcJ_k\longrightarrow\mcJ_k\times\{1,\ldots,N\},$$ $$(i,\gamma)\mapsto (\gamma,i).$$

Write $$G_k(t)=\theta I_{m_k}+N_k+B_k\tau.$$ Then, $$G_k^{\oplus N}(t)
=\theta I_{d_k}+N_k^{\oplus N}+B_k^{\oplus N}\tau$$ with rows and columns
indexed by $\{1,\ldots,N\}\times\mcJ_k$. Since $\mfS_k$ is a permutation
matrix with entries in $\FF_q$, we have $$\mfS_k(B_k^{\oplus N}\tau)\mfS_k^{-1}=\mfS_kB_k^{\oplus N}(\mfS_k^{-1})^{(1)}\tau=\mfS_kB_k^{\oplus N}\mfS_k^{-1}\tau.$$

For any matrix $M$ with rows and columns indexed by $\{1,\ldots,N\}\times\mcJ_k$, Lemma \ref{lem:per} gives
$$(\mfS_kM\mfS_k^{-1})_{(\alpha,i),(\beta,j)}
=M_{(i,\alpha),(j,\beta)}$$ for all $\alpha,\beta\in\mcJ_k,\ 1\leq i,j\leq N$.
Let $H=N_k^{\oplus N}$. Then, $$H_{(i,\alpha),(j,\beta)}=\begin{cases}
(N_k)_{\nu_k(\alpha),\nu_k(\beta)},& i=j,\\
0,& i\neq j.
\end{cases}$$
Therefore, for all $\alpha,\beta\in\mcJ_k$, $$(\mfS_kN_k^{\oplus N}\mfS_k^{-1})[\alpha,\beta]=(N_k)_{\nu_k(\alpha),\nu_k(\beta)}I_N.$$
By the definition of $N_k$, its only non-zero entries are $(N_k)_{s,s+r}=1$, where $1\leq s\leq rk+1-r$. Equivalently, under the map $\nu_k$, these entries are $$(N_k)_{\nu_k(\ell,j),\nu_k(\ell,j+1)}=1, \quad\textnormal{ for all }1\leq \ell\leq r,\ 0\leq j\leq k-2,$$
and $$(N_k)_{\nu_k(1,k-1),\nu_k(1,k)}=1.$$
Thus $\mfS_kN_k^{\oplus N}\mfS_k^{-1}$ is exactly the block matrix $\mcU_k$ defined in \eqref{twist_k}.

Applying the same argument to $B_k^{\oplus N}$ gives, for all
$\alpha,\beta\in\mcJ_k$, $$(\mfS_kB_k^{\oplus N}\mfS_k^{-1})[\alpha,\beta]=(B_k)_{\nu_k(\alpha),\nu_k(\beta)}I_N.$$

By the definition of $B_k$, the non-zero entries of $B_k$ are precisely $$\begin{cases}
	(B_k)_{\nu_k(\ell,k-1),\nu_k(\ell-1,0)}=1,\quad2\leq \ell\leq r,\\
	(B_k)_{\nu_k(1,k),\nu_k(\ell,0)}=a_\ell,\quad1\leq \ell\leq r.
\end{cases}$$

Hence, $$\mfS_kB_k^{\oplus N}\mfS_k^{-1}=\mcB_k.$$ It follows that $$\mfS_kG_k^{\oplus N}(t)\mfS_k^{-1}=\theta I_{d_k}+\mcU_k+\mcB_k\tau.$$
\end{proof}
\begin{prop}\label{prop:Gk-tran}
	Let $G_k=G_{\varphi,k}$. With the matrices $\Pi_k$ and
	$\widetilde\Pi_k$ defined above, we have
	$$\mcE_k(t)	=\widetilde\Pi_k^{-1}\cdot G_k^{\oplus N}(t)\cdot \widetilde\Pi_k.$$
	
	In particular, $$\mcE_k\otimes_K K^\sep\cong G_k^{\oplus N}\otimes_K K^\sep.$$
\end{prop}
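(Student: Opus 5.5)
By Lemma \ref{lem:perG} and the definition $\widetilde\Pi_k=\mfS_k^{-1}\Pi_k$, we have
$$\widetilde\Pi_k^{-1}G_k^{\oplus N}(t)\widetilde\Pi_k=\Pi_k^{-1}\mfS_kG_k^{\oplus N}(t)\mfS_k^{-1}\Pi_k=\Pi_k^{-1}\bigl(\theta I_{d_k}+\mcU_k+\mcB_k\tau\bigr)\Pi_k .$$
Here the first equality uses that conjugation by the $\FF_q$-matrix $\mfS_k$ commutes with $\tau$. In $\Mat_{d_k}(K^\sep)[\tau]$ the right-hand side equals
$$\theta I_{d_k}+\Pi_k^{-1}\mcU_k\Pi_k+\Pi_k^{-1}\mcB_k\Pi_k^{(1)}\tau,$$
since $\tau\Pi_k=\Pi_k^{(1)}\tau$. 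So it suffices to prove two matrix identities:
$$\Pi_k^{-1}\mcU_k\Pi_k=\mcU_k,\qquad \Pi_k^{-1}\mcB_k\Pi_k^{(1)}=\mcA_k.$$
Both can be checked block by block, since $\Pi_k$ is block diagonal. The block indexed by $\nu_k(\ell,j)$ is $P_\ell=P^{(\ell-1)}$, and the block indexed by $\nu_k(1,k)$ is $P_1$. Thus the $(\alpha,\beta)$ block of the conjugated matrix is $P_{\ell(\alpha)}^{-1}M[\alpha,\beta]P_{\ell(\beta)}$ or $P_{\ell(\alpha)}^{-1}M[\alpha,\beta]P_{\ell(\beta)}^{(1)}$. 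Here $\ell(\gamma)$ denotes the first component of $\gamma\in\mcJ_k$.

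For $\mcU_k$, the nonzero blocks are $I_N$ in positions $(\nu_k(\ell,j),\nu_k(\ell,j+1))$ with $0\le j\le k-2$, together with $(\nu_k(1,k-1),\nu_k(1,k))$, as recorded in the proof of Lemma \ref{lem:perG}. In every case both indices have the same first component $\ell$. The conjugated block is therefore $P_\ell^{-1}I_NP_\ell=I_N$, which gives the first identity.

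For $\mcB_k$, the nonzero blocks come in two kinds.

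First, there are blocks $I_N$ in positions $(\nu_k(\ell,k-1),\nu_k(\ell-1,0))$ for $2\le\ell\le r$. These become $P_\ell^{-1}P_{\ell-1}^{(1)}=P_\ell^{-1}P_\ell=I_N$. This is exactly where the Frobenius-shifted choice $P_\ell=P^{(\ell-1)}$ is forced.

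Second, there are blocks $a_\ell I_N$ in positions $(\nu_k(1,k),\nu_k(\ell,0))$. These become $a_\ell P^{-1}\bigl(P^{(\ell-1)}\bigr)^{(1)}=a_\ell P^{-1}P^{(\ell)}$, using that $a_\ell\in K$ is a scalar.

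It remains to identify $a_\ell P^{-1}P^{(\ell)}$ with $A_\ell$. By Theorem \ref{thm:AT}(3), $\mcE_t=P^{-1}\varphi_t^{\oplus N}P$, and expanding gives
$$P^{-1}\varphi_t^{\oplus N}P=\theta I_N+\sum_{\ell=1}^r a_\ell P^{-1}P^{(\ell)}\tau^\ell.$$
Comparing $\tau^\ell$-coefficients with Theorem \ref{thm:AT}(2), i.e. with $\mcE_t=\theta I_N+\sum_\ell A_\ell\tau^\ell$, yields $A_\ell=a_\ell P^{-1}P^{(\ell)}$. This is precisely the last block row of $\mcA_k$. All other blocks of $\mcB_k$ and $\mcA_k$ vanish, so the second identity holds.

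Combining the two identities gives $\mcE_k(t)=\widetilde\Pi_k^{-1}G_k^{\oplus N}(t)\widetilde\Pi_k$. Since $\widetilde\Pi_k\in\GL_{d_k}(K^\sep)$, because $P$ has entries in $K^\sep$, this conjugation defines an isomorphism $\mcE_k\otimes_KK^\sep\cong G_k^{\oplus N}\otimes_KK^\sep$.

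The only delicate point is the index bookkeeping: the $\tau$-coefficient is conjugated as $\Pi_k^{-1}(\cdot)\Pi_k^{(1)}$, and not as $\Pi_k^{-1}(\cdot)\Pi_k$. One must match each nonzero block of $\mcB_k$ with the correct pair of diagonal blocks of $\Pi_k$, via $\nu_k$.
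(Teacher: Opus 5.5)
Your proposal is correct and follows the paper's proof essentially step for step: you reduce via Lemma \ref{lem:perG} to the two block identities $\Pi_k^{-1}\mcU_k\Pi_k=\mcU_k$ and $\Pi_k^{-1}\mcB_k\Pi_k^{(1)}=\mcA_k$, then check them blockwise using $P_\ell=P^{(\ell-1)}$. Your derivation of $A_\ell=a_\ell P^{-1}P^{(\ell)}$, by comparing $\tau^\ell$-coefficients of $P^{-1}\varphi_t^{\oplus N}P$, just makes explicit what the paper cites directly from Theorem \ref{thm:AT}.
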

\begin{proof}
By Lemma \ref{lem:perG}, we have
$$\mfS_k\cdot G_k^{\oplus N}(t)\cdot \mfS_k^{-1}=\theta I_{d_k}+\mcU_k+\mcB_k\tau.$$

It is easy to see that if $\dis \bx = (\bx[\gamma_1],\cdots, \bx[\gamma_{rk+1}])^\top\in \bigoplus_{\gamma\in\mcJ_k}\CC_\infty^N = \CC_\infty^{d_k}$, then  $$(H\bx)[\alpha]=\sum_{\beta\in\mcJ_k}H[\alpha,\beta]\bx[\beta].$$
Since $\Pi_k$ is block diagonal,  if $\gamma = (\ell,j)\in \mcJ_k$, put $$\Pi_k[\gamma]:=\Pi_k[\gamma,\gamma] = P_\ell.$$

Thus, we must have $$(\Pi_k^{-1}\mcU_k\Pi_k)[\alpha,\beta]=\Pi_k[\alpha]^{-1}\mcU_k[\alpha,\beta]\Pi_k[\beta]$$ as $\Pi_k$ is block diagonal. If this block is non-zero, then $\mcU_k[\alpha,\beta]\neq 0$, i.e. $\alpha = (\ell,j)$ and $\beta = (\ell,j+1)$ with $1\leq jr+\ell\leq rk+1-r$. Equivalently, either $\alpha=(\ell,j)$, $\beta=(\ell,j+1)$ with $1\leq \ell\leq r,\ 0\leq j\leq k-2$ or $\alpha=(1,k-1)$, $\beta=(1,k)$. In both cases, $\Pi_k[\alpha]=\Pi_k[\beta] = P_\ell$. Hence, $(\Pi_k^{-1}\mcU_k\Pi_k)[\alpha,\beta]=I_N$ (resp. $0$) if and only if $\mcU_k[\alpha,\beta]=I_N$ (resp. $0$). Therefore, we have 
$$\Pi_k^{-1}\mcU_k\Pi_k=\mcU_k.$$

Similarly, we have $$(\Pi_k^{-1}\mcB_k\Pi_k^{(1)})[\alpha,\beta]
=\Pi_k[\alpha]^{-1}\mcB_k[\alpha,\beta]\Pi_k[\beta]^{(1)}.$$
By the definition of $\mcB_k$, we have $$\mcB_k[\alpha,\beta] = \begin{cases}
	I_N,\textnormal{ if }\alpha = (\ell,k-1), \beta = (\ell-1,0), 2\leq \ell\leq r\\
	a_\ell I_N, \textnormal{ if }\alpha = (1,k), \beta = (\ell,0), 1\leq \ell\leq r\\
	0, \textnormal{ otherwise}.
\end{cases}$$ Thus, in the first case, we have $$(\Pi_k^{-1}\mcB_k\Pi_k^{(1)})[\alpha,\beta]=P_\ell^{-1}I_NP_{\ell-1}^{(1)}=(P^{(\ell-1)})^{-1}P^{(\ell-1)}=I_N.$$ In the second case, $$(\Pi_k^{-1}\mcB_k\Pi_k^{(1)})[\alpha,\beta]=P_1^{-1}(a_\ell I_N)P_\ell^{(1)}=a_\ell P^{-1}P^{(\ell)}=A_\ell,$$ by Theorem \ref{thm:AT}. Thus, we have $$\Pi_k^{-1}\mcB_k\Pi_k^{(1)}=\mcA_k.$$
Combining the discussion above gives $$\Pi_k^{-1}\mfS_k\cdot G_k^{\oplus N}(t)\cdot \mfS_k^{-1}\Pi_k=\Pi_k^{-1}(\theta I_{d_k}+\mcU_k+\mcB_k\tau)\Pi_k=\theta I_{d_k}+\mcU_k+\mcA_k\tau=\mcE_k(t).$$

Since $\widetilde\Pi_k=\mathfrak S_k^{-1}\Pi_k$, this is precisely
$$\widetilde\Pi_k^{-1}\cdot G_k^{\oplus N}(t)\cdot \widetilde\Pi_k=\mcE_k(t).$$
The last assertion follows immediately.
\end{proof}

\subsection{The proof of Theorem \ref{thm:int}}

Since the ground field is now $K$, the ring of integers is $A$. We first choose integral models for the Artin twists.

\begin{lem}\label{lem:den}
	There exists $b\in A-\{0\}$ such that the following hold for all $k\geq 1$:
	\begin{enumerate}
		\item the $t$-modules $\widehat{\mcE}:= b^{-1}\mcE b$ and $\widehat{\mcE}_{k}:=b^{-1}\mcE_{k}b$,  are defined over $A$;
		\item $\widehat{\mcE}_k = \widehat{\mcE}\otimes C^{\otimes k}$, i.e. $\MM(\widehat{\mcE}_k)\cong \MM(\widehat{\mcE})\otimes \MM(C)^{\otimes k}$;
		\item $\partial\widehat{\mcE}_{k}(t)-\theta I_{d_k}=\mcU_k$, where $\mcU_k$ is the nilpotent matrix in \eqref{twist_k};
		\item $\exp_{\widehat \mcE} = b^{-1}\exp_{\mcE}b$ and $\exp_{\widehat{\mcE}_k}=b^{-1}\exp_{\mcE_k}b$.
	\end{enumerate}
\end{lem}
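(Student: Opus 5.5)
Everything is computed directly from the coefficient matrices. Write $\mcE_t=\theta I_N+\sum_{i=1}^r A_i\tau^i$ with $A_i\in\Mat_N(K)$, and $\mcE_k(t)=\theta I_{d_k}+\mcU_k+\mcA_k\tau$. The only non-integral entries of $\mcE_k(t)$ are those of $A_1,\dots,A_r$, which occur in the last block row of $\mcA_k$; $\mcU_k$ and the identity blocks of $\mcA_k$ are integral.

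Choose $b\in A\setminus\{0\}$ that clears these denominators. For a scalar $b$ we have
\[
b^{-1}(X\tau^i)b=b^{q^i-1}X\tau^i .
\]
Hence
\[
b^{-1}\mcE_t b=\theta I_N+\sum_i b^{q^i-1}A_i\tau^i ,
\]
and
\[
b^{-1}\mcE_k(t)b=\theta I_{d_k}+\mcU_k+b^{q-1}\mcA_k\tau .
\]
The constant term is unchanged because $b$ is a scalar and commutes with constant matrices. So it suffices to take $b$ to be a common denominator of all entries of $A_1,\dots,A_r$. Since $q^i-1\ge q-1\ge 1$, every $b^{q^i-1}A_i$ and $b^{q-1}\mcA_k$ then lies in $A$. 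This $b$ depends only on the $A_i$, so the same $b$ works for all $k\ge1$. This proves (1), and (3) follows at once from the display.

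For (4), I would use uniqueness of the exponential. Put $e:=b^{-1}\exp_{\mcE_k}b$. It is $\FF_q$-linear with constant term $b^{-1}I\,b=I$. It also satisfies
\[
e\circ\partial\widehat{\mcE}_k(t)=b^{-1}\exp_{\mcE_k}\partial\mcE_k(t)\,b=b^{-1}\mcE_k(t)\exp_{\mcE_k}b=\widehat{\mcE}_k(t)\circ e ,
\]
using $\partial\widehat{\mcE}_k(t)=\partial\mcE_k(t)$. The exponential is the unique entire $\FF_q$-linear series with identity constant term satisfying this functional equation, so $e=\exp_{\widehat{\mcE}_k}$. The same argument applies to $\widehat{\mcE}$.

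For (2), note that $b$ is an isomorphism $\widehat{\mcE}\to\mcE$ over $K$, and likewise $\widehat{\mcE}_k\to\mcE_k$. Therefore $\MM(\widehat{\mcE})\cong\MM(\mcE)$ and $\MM(\widehat{\mcE}_k)\cong\MM(\mcE_k)\cong\MM(\mcE)\otimes\MM(C)^{\otimes k}$, which is $\MM(\widehat{\mcE})\otimes\MM(C)^{\otimes k}$.

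The main subtlety is the interpretation of (2). If it is meant as equality of explicit models, namely that $\widehat{\mcE}_k$ is the construction \eqref{twist_k} applied to $\widehat{\mcE}$, this needs checking. Applying \eqref{twist_k} to $\widehat{\mcE}$ gives last-row blocks $b^{q^i-1}A_i$, whereas $\widehat{\mcE}_k$ has $b^{q-1}A_i$, so the two models are not literally equal. I would show they are nevertheless isomorphic over $K$, indeed over $A$, by conjugating with a suitable block-diagonal matrix of powers of $b$, in the spirit of how $\Pi_k$ in Proposition \ref{prop:Gk-tran} inserts Frobenius twists block by block. Failing that, I would state (2) only at the level of motives over $K$, which is all that Corollary \ref{cor:Ek-L} needs.
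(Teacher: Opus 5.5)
Your proof is correct and is essentially the paper's: the paper also conjugates by a scalar $b\in A-\{0\}$ clearing denominators (it asks that $b^{q-1}A_i$ be integral, which your common-denominator choice also guarantees). It reads off (1) and (3) from $b^{-1}\mcE_k(t)b=\theta I_{d_k}+\mcU_k+b^{q-1}\mcA_k\tau$, obtains (2) only as an isomorphism of motives over $K$ ``because $b\in K$'', and gets (4) from uniqueness of the exponential. Your side remark that the two explicit models differ is right, but the diagonal conjugation you have in mind, $\operatorname{diag}(1,b^{q-1},\ldots,b^{q^{r-1}-1},\ldots,1)$, is not in $\GL_{d_k}(A)$ once $r\geq 2$ and $b\notin\FF_q^\times$. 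So that comparison holds over $K$ and not over $A$, which is harmless here since (2) is only used over $K$.
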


\begin{proof}
	By \eqref{twist_k}, we have $\mcE_{k}(t)=\theta I_{d_k}+\mcU_k+\mcA_k\tau$, where the entries of $\mcU_k$ lie in $A$ and the entries of $\mcA_k$ lie in $K$. Choose $b\in A-\{0\}$ such that $b^{q-1}A_i$ has entries in $A$ for all $1\leq i\leq r$. Then $b^{q-1}\mcA_k$ has entries in $A$. Since $b$ commutes with $\theta I_{d_k}+\mcU_k$, and
	$$b^{-1}(\mcA_k\tau)b=b^{-1}\mcA_k b^{(1)}\tau=b^{q-1}\mcA_k\tau,$$
	we have $$\widehat{\mcE}_{k}(t)=\theta I_{d_k}+\mcU_k+b^{q-1}\mcA_k\tau\in \Mat_{d_k}(A)[\tau].$$
	Similarly, $\widehat{\mcE}(t)\in \Mat_{N}(A)[\tau]$. Thus (1) and (3) hold, and (2) follows because $b\in K$. The identity for the exponential map follows from uniqueness of the exponential map.
\end{proof}
Consider the projection map $$\pi_k:\Lie_{\widehat{\mcE}_k}(K_\infty)\longrightarrow K_\infty^{rN}$$ given by $$(y_1,\ldots,y_{rk+1})^\top\longmapsto(y_{r(k-1)+2},\ldots,y_{rk+1})^\top.$$ It has kernel $\mcU_k\Lie_{\widehat{\mcE}_k}(K_\infty)$. Indeed, by Lemma \ref{lem:den}, $\partial\widehat{\mcE}_{k}(t)-\theta I_{d_k}=\mcU_k$. Recall that $d_k = N(rk+1)$ and $\Lie_{\widehat{\mcE}_k}(K_\infty) = K_\infty^{d_k}$. For
$$\by=(y_1,\ldots,y_{rk+1})^\top,\qquad y_i\in K_\infty^N,$$ we have
$$\mcU_k\by=(y_{r+1},\ldots,y_{rk+1},0,\ldots,0)^\top.$$

Thus, we have the following result.
\begin{lem}\label{lem:Wk}
Let $k\geq1$. Then the quotient
$$W_{\widehat{\mcE}_k}(K_\infty)=\Lie_{\widehat{\mcE}_k}(K_\infty)/(\partial\widehat{\mcE}_{k}(t)-\theta I_{d_k})\Lie_{\widehat{\mcE}_k}(K_\infty)$$
can be identified with the last $r$ blocks of $\Lie(\widehat{\mcE}_k)(K_\infty)$ via $\pi_k$. In particular, $W_{\widehat{\mcE}_k}(K_\infty)\cong K_\infty^{rN}$ and $W_{\widehat{\mcE}_k}(A)\cong A^{rN}$.
\end{lem}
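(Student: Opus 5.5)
The plan is to verify directly that the kernel of $\pi_k$ is the image of $\partial\widehat{\mcE}_k(t)-\theta I_{d_k}$, and then pass to the quotient.

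\emph{Step 1: the kernel of the quotient map.} By Lemma \ref{lem:den}(3), $\partial\widehat{\mcE}_{k}(t)-\theta I_{d_k}=\mcU_k$. The quotient $W_{\widehat{\mcE}_k}(K_\infty)$ is therefore $K_\infty^{d_k}/\mcU_k K_\infty^{d_k}$. Write $\by=(y_1,\ldots,y_{rk+1})^\top$ with $y_i\in K_\infty^N$. The block description of $\mcU_k$ shows that block row $s$ of $\mcU_k$, for $1\leq s\leq rk+1-r$, has the single nonzero block $I_N$ in block column $s+r$, and the last $r$ block rows are zero. Hence
$$\mcU_k\by=(y_{r+1},\ldots,y_{rk+1},0,\ldots,0)^\top.$$
As $\by$ ranges over $K_\infty^{d_k}$, the blocks $y_{r+1},\ldots,y_{rk+1}$ are arbitrary. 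So the image of $\mcU_k$ is exactly the set of vectors whose last $r$ blocks vanish. These are the first $rk+1-r=r(k-1)+1$ blocks, so the image is $K_\infty^{N(r(k-1)+1)}\oplus 0$, which is $\ker(\pi_k)$.

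\emph{Step 2: identification of $W_{\widehat{\mcE}_k}(K_\infty)$.} The map $\pi_k$ is $K_\infty$-linear and surjective onto $K_\infty^{rN}$. By Step 1 its kernel is the submodule we quotient by. The induced map $W_{\widehat{\mcE}_k}(K_\infty)\to K_\infty^{rN}$ is therefore an isomorphism, which gives $W_{\widehat{\mcE}_k}(K_\infty)\cong K_\infty^{rN}$.

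\emph{Step 3: the integral part.} $W_{\widehat{\mcE}_k}(A)$ is by definition the image of $\Lie_{\widehat{\mcE}_k}(A)=A^{d_k}$. Under the isomorphism of Step 2, this image is $\pi_k(A^{d_k})$. Since $\pi_k$ is a coordinate projection, $\pi_k(A^{d_k})=A^{rN}$, and hence $W_{\widehat{\mcE}_k}(A)\cong A^{rN}$, sitting as a lattice in $K_\infty^{rN}$.

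No step poses a genuine obstacle. The only point needing care is the block bookkeeping in Step 1: each $I_N$ in $\mcU_k$ sits exactly $r$ block columns to the right of the diagonal, so the shift is by $r$ blocks and the complementary coordinates are precisely the last $r$ blocks.
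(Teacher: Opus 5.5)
Your proposal is correct and follows essentially the same route as the paper. You use Lemma \ref{lem:den}(3) to replace $\partial\widehat{\mcE}_k(t)-\theta I_{d_k}$ by $\mcU_k$, compute $\mcU_k\by=(y_{r+1},\ldots,y_{rk+1},0,\ldots,0)^\top$ to see that its image is exactly $\ker(\pi_k)$, and pass to the quotient. Your Step 3, identifying $W_{\widehat{\mcE}_k}(A)$ with $\pi_k(A^{d_k})=A^{rN}$, simply spells out the integral statement that the paper leaves implicit.
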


\begin{lem}[cf. {\cite[Theorem 4.1]{GN24}}]\label{lem:Z}
	There exists a $K_\infty$-subspace $\mcZ\subseteq \Lie_{\widehat{\mcE}_k}(K_\infty)$ such that
	\begin{enumerate}
		\item the quotient map $\pi_k:\Lie_{\widehat{\mcE}_k}(K_\infty)\to W_{\widehat{\mcE}_k}(K_\infty)$ induces an isomorphism $\mcZ \simeq W_{\widehat{\mcE}_k}(K_\infty)$;
		\item $U(\widehat{\mcE}_k/A)\subseteq \Lie_{\widehat{\mcE}_k}(K)+\mcZ$;
		\item $U(\widehat{\mcE}_k/A)\cap \mcZ$ and $\Lie_{\widehat{\mcE}_k}(A)\cap \mcZ$ are $\bA$-lattices in $\mcZ$ and
		$$L(\widehat{\mcE}_k^\vee,0) = c_k\cdot [\Lie_{\widehat{\mcE}_k}(A)\cap \mcZ:U(\widehat{\mcE}_k/A)\cap \mcZ]_A$$ for some $c_k\in K^\times$.
	\end{enumerate}
\end{lem}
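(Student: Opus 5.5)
We will deduce Lemma \ref{lem:Z} from the class formula of Angl\`es, Ngo Dac and Tavares Ribeiro \cite{ANDTR20,angles2022class}, in the form used by Gezmi{\c s}--Namoijam \cite[Theorem 4.1]{GN24}. The point is that $\widehat{\mcE}_k$ has the same shape as $G_k$: a nilpotent part $\mcU_k$ whose image is exactly $\ker\pi_k$, and a $\tau$-part $b^{q-1}\mcA_k\tau$ that is nonzero only in the block rows of $\mathcal{J}_k$ indexed by $(\ell,k-1)$, $2\le \ell\le r$, and $(1,k)$. So the argument of \cite[Theorem 4.1]{GN24} should go through with scalars replaced by $N\times N$ blocks.

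\textbf{Step 1 (definition of $\mcZ$).} Take $\mcZ$ to be the $K_\infty$-span of the last $r$ blocks of coordinates, i.e.\ vectors $\by=(0,\ldots,0,y_{r(k-1)+2},\ldots,y_{rk+1})^\top$. By Lemma \ref{lem:Wk}, $\pi_k|_{\mcZ}$ is an isomorphism onto $W_{\widehat{\mcE}_k}(K_\infty)\cong K_\infty^{rN}$, which gives (1).

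\textbf{Step 2 (statement (2)).} Take $\bz\in U(\widehat{\mcE}_k/A)$. By Lemma \ref{lem:den}(4), $\exp_{\widehat{\mcE}_k}=b^{-1}\exp_{\mcE_k}b$. By Proposition \ref{prop:Gk-tran}, $\exp_{\mcE_k}=\widetilde\Pi_k^{-1}\exp_{G_k^{\oplus N}}\widetilde\Pi_k$, up to the appropriate differential twist; $\partial\widetilde\Pi_k$ is $\widetilde\Pi_k$ itself, since it is a constant matrix. This reduces the question to the corresponding statement for $G_k$. For $G_k$ and its integral models, \cite[\S4]{GN24} shows that logarithms of integral points have their non-tractable coordinates in $K$. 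The mechanism is the Anderson--Thakur / Yu phenomenon: iterating the functional equation $\exp\circ\partial\widehat{\mcE}_k(t)=\widehat{\mcE}_k(t)\circ\exp$, together with the nilpotent shift $\mcU_k$, expresses the first $r(k-1)+1$ block coordinates of $\bz$ as $K$-linear combinations of coordinates of $\exp(\bz)$ and its $\tau$-twists.

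If transporting through $\widetilde\Pi_k$ causes trouble with $K$-rationality (the entries of $P$ lie in $K^\sep$, not in $K$), I would instead rerun the \cite{GN24} argument directly on $\widehat{\mcE}_k$ over $K$. This works because the relevant identities only involve the block structure of $\mcU_k$ and $\mcA_k$, not the particular entries.

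\textbf{Step 3 (statement (3)).} Invoke the class formula \cite[Theorem A]{angles2022class} (or \cite{ANDTR20}) for the abelian $t$-module $\widehat{\mcE}_k$ over $A$. I expect this to be the main obstacle, since it requires checking the admissibility hypotheses. To do so, use Proposition \ref{prop:Gk-tran}: over $K^\sep$, $\mcE_k$ is isomorphic to $G_k^{\oplus N}$, and $G_k$ satisfies the hypotheses by \cite{GN24}. The class formula gives
\[
L(\widehat{\mcE}_k^\vee,0)=[\Lie_{\widehat{\mcE}_k}(A):U(\widehat{\mcE}_k/A)]_A\cdot[H(\widehat{\mcE}_k/A)]_A,
\]
with the covolume ratio taken in the sense of \cite{ANDTR20}, which is well defined on the quotient $W_{\widehat{\mcE}_k}$. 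Using (2), the $\mcZ$-components of $U$ and of $\Lie(A)$ are commensurable with their projections; the $\Lie_{\widehat{\mcE}_k}(K)$-parts only contribute a factor in $K^\times$. Hence the ratio equals $[\Lie(A)\cap\mcZ:U\cap\mcZ]_A$ up to an element of $K^\times$. The class module is finite, so $[H]_A\in A\setminus\{0\}$. Absorbing this factor together with the finitely many Euler factors at bad places (all lying in $K^\times$) gives $c_k$.

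Finally, discreteness and cocompactness of $U\cap\mcZ$ in $\mcZ$ follow from the corresponding facts for $U$ in the class formula setting combined with (2); for $\Lie(A)\cap\mcZ\cong A^{rN}$ they are clear.
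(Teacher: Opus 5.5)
Your Step~1 is where the argument breaks. With $\mcZ_0$ the coordinate span of the last $r$ blocks, statement (2) is false. So the fact you attribute to \cite[\S4]{GN24}, that the non-tractable block coordinates of units lie in $K$, cannot hold, and the functional equation provides no such ``mechanism''.

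To see this, note that $U=U(\widehat{\mcE}_k/A)$ is stable under $\partial\widehat{\mcE}_k(t)=\theta I_{d_k}+\mcU_k$. For $\by=(y_1,\ldots,y_{rk+1})^\top\in U$, the block of index $r(k-1)+1$ of $\partial\widehat{\mcE}_k(t)\by$ is $\theta y_{r(k-1)+1}+y_{rk+1}$. Suppose $U\subseteq\Lie_{\widehat{\mcE}_k}(K)+\mcZ_0$. Applying this to both $\by$ and $\partial\widehat{\mcE}_k(t)\by$ forces $y_{rk+1}\in K^N$ for every unit. For $k\geq 2$, the blocks $r(k-2)+2,\ldots,r(k-1)+1$ even force all of $U$ into $\Lie_{\widehat{\mcE}_k}(K)$.

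By Lemma~\ref{lem:coor}, the last tractable coordinate of each component of $b\widetilde\Pi_k\by$ would then be algebraic. Take $\by\neq0$ and a nonzero component; its exponential is algebraic, as in Lemma~\ref{lem:algpt}. Theorem~\ref{thm:Gk-ai} says the tractable coordinates of such a nonzero logarithm are algebraically independent over $\conj K$, hence transcendental, which is a contradiction. Already for $\varphi=C$, $\rho$ trivial and $k=1$ (where one may take $\widehat{\mcE}_1=C^{\otimes 2}$), your (2) would give $U(C^{\otimes 2}/A)\subseteq K^2$, contradicting Yu's theorem \cite{Yu91} since $U\neq 0$.

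Likewise, $\mcZ_0$ is not stable under $\theta I_{d_k}+\mcU_k$. Any $\bA$-submodule of $\mcZ_0$ lies in $\ker\mcU_k\cap\mcZ_0$, which is a proper subspace, so $U\cap\mcZ_0$ cannot be an $\bA$-lattice in $\mcZ_0$ as (3) requires. The subspace $\mcZ$ in the lemma is not this naive complement of $\mcU_k\Lie_{\widehat{\mcE}_k}(K_\infty)$.

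Step~3 has a second gap, independent of the choice of $\mcZ$. The class formula computes the covolume $[\Lie_{\widehat{\mcE}_k}(A):U]_{\bA}$ of lattices of rank $d_k=N(rk+1)$, while (3) is a covolume of rank-$rN$ lattices in $\mcZ$. Converting one into the other up to $K^\times$ is exactly the content of Taelman's conjecture as proved in \cite[Theorem 4.4, Corollary 4.5]{ANDTR20}. It does not follow from (2) by a commensurability remark, and (2) alone does not even show that $U\cap\mcZ$ has full rank in $\mcZ$.

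The paper's proof applies \cite[Theorem 4.4, Corollary 4.5]{ANDTR20} directly to $\widehat{\mcE}_k$, which is an abelian $t$-module defined over $A$ by Lemma~\ref{lem:den}. That result produces a suitable $\mcZ$ together with (1)--(3) at once. No transport through $\widetilde\Pi_k$, which is only defined over $K^{\sep}$, is needed, and no admissibility check via $G_k$ is needed either.
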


\begin{proof}
	This follows from \cite[Theorem 4.4, Corollary 4.5]{ANDTR20}.
\end{proof}

For the rest of this subsection, write $W_k$ for $W_{\widehat{\mcE}_k}$ and $\Lie_k$ for $\Lie_{\widehat{\mcE}_k}$. Set $\delta=N(rk+1-r)$. By Lemma \ref{lem:Z}, we can choose an $A$-basis $e_1,\ldots,e_{N(rk+1)}$ of $\Lie_k(A)$ such that $e_{\delta+1},\ldots,e_{\delta+rN}$ is an $A$-basis of $\Lie_k(A)\cap\mcZ$, and $\pi_k(e_{\delta+1}),\ldots,\pi_k(e_{\delta+rN})$ is the standard $A$-basis of $W_k(A)$. Choose an $A$-basis $g_1,\ldots,g_{rN}$ of $U(\widehat{\mcE}_k/A)\cap\mcZ$. 

Let $h:\mcZ\to\mcZ$ be the $K_\infty$-linear map given by  $$h(e_{\delta+j})=g_j,$$ for all $1\leq j\leq rN$.
Let $\widehat h:W_k(K_\infty)\to W_k(K_\infty)$ be the $K_\infty$-linear map induced by $h$ through $\pi_k$. Write $$\pi_k(g_j)=\sum_{i=1}^{rN}g_{ij}\pi_k(e_{\delta+i})$$ for some $g_{ij}\in K_\infty$ and put $$M(\widehat h):=(g_{ij})_{1\leq i,j\leq rN}.$$

Then, it is immediate to see the following.
\begin{cor}\label{cor:reg}
	We have $\det M(\widehat h)\neq0$ and $$L(\widehat{\mcE}_k^\vee,0)=c_k\det M(\widehat h)$$ for some $c_k\in K^\times$.
\end{cor}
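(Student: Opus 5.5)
The plan is to compare $M(\widehat h)$ with the matrix of $h$ on $\mcZ$ through $\pi_k$, and then read off the covolume from Lemma \ref{lem:Z}(3).

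First I check that $\widehat h$ is well defined and invertible. By Lemma \ref{lem:Z}(1), $\pi_k|_{\mcZ}:\mcZ\to W_k(K_\infty)$ is a $K_\infty$-linear isomorphism. Hence $\widehat h:=\pi_k\circ h\circ(\pi_k|_{\mcZ})^{-1}$ is the unique map with $\widehat h(\pi_k(e_{\delta+j}))=\pi_k(g_j)$. In particular $M(\widehat h)$ is exactly the matrix of $h$ in the basis $e_{\delta+1},\ldots,e_{\delta+rN}$, transported by this isomorphism. Since $g_1,\ldots,g_{rN}$ is an $A$-basis of the lattice $U(\widehat{\mcE}_k/A)\cap\mcZ$, which spans $\mcZ$ over $K_\infty$ by Lemma \ref{lem:Z}(3), the $g_j$ are $K_\infty$-linearly independent. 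Therefore $h$ is bijective, so $\det h\neq 0$, and so $\det M(\widehat h)=\det h\neq0$.

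Next I identify the covolume. By definition of $[\,\cdot:\cdot\,]_A$, with $M=\Lie_k(A)\cap\mcZ$ having basis $(e_{\delta+j})$ and the other lattice $U(\widehat{\mcE}_k/A)\cap\mcZ$ having basis $(g_j)$, the linear map $\gamma$ sending $e_{\delta+j}\mapsto g_j$ is precisely $h$. This gives
$$[\Lie_k(A)\cap\mcZ:U(\widehat{\mcE}_k/A)\cap\mcZ]_A=\det(h)/\sgn(\det h).$$
Since $\sgn(\det h)\in\FF_q^\times\subset K^\times$, combining with Lemma \ref{lem:Z}(3) yields $L(\widehat{\mcE}_k^\vee,0)=c_k'\det(h)=c_k'\det M(\widehat h)$, where $c_k'=c_k\,\sgn(\det h)^{-1}\in K^\times$. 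Renaming $c_k'$ as $c_k$ gives the statement.

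The only mildly delicate point is the existence of the adapted basis $e_1,\ldots,e_{N(rk+1)}$ used in the setup, and its compatibility with $\pi_k$. This is a choice already made before the statement, so it is not part of the proof. The computation itself is routine once the equality $M(\widehat h)=[h]$ is observed.
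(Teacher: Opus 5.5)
Your proposal is correct and follows the same route as the paper. Via the isomorphism $\pi_k|_{\mcZ}$, $M(\widehat h)$ is the matrix of $h$ in the basis $e_{\delta+1},\ldots,e_{\delta+rN}$, so the covolume in Lemma \ref{lem:Z}(3) is computed by $\det M(\widehat h)$. You are in fact slightly more careful than the paper's one-line proof, which asserts $[\Lie_{\widehat{\mcE}_k}(A)\cap\mcZ:U(\widehat{\mcE}_k/A)\cap\mcZ]_A=\det M(\widehat h)$ literally; by definition the two differ by the factor $\sgn(\det h)\in\FF_q^\times$, which you correctly absorb into $c_k$, and you also justify $\det M(\widehat h)\neq 0$ explicitly.
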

\begin{proof}
	This simply follows from Lemma \ref{lem:Z}(3) as $[\Lie_{\widehat{\mcE}_k}(A)\cap \mcZ:U(\widehat{\mcE}_k/A)\cap \mcZ]_A = \det M(\widehat{h})$.
\end{proof}
Recall that $\widetilde\Pi_k=\mathfrak S_k^{-1}\Pi_k$ and $P_h:=P^{(h-1)}$ for all $1\leq h\leq r$, where $P\in \GL_N(K^\sep)$ is the matrix in Theorem \ref{thm:AT}. We further define $P_{r+1}:=P_1$, and $$B_{k}:=\operatorname{diag}(bP_2,\ldots,bP_r,bP_1)\in \GL_{rN}(\conj K).$$

For $1\leq j\leq rN$, define $\by_j:=g_j\in \Lie_{k}(K_\infty)$ and $\bz_j:=b\widetilde\Pi_k\by_j\in \Lie_{G_k^{\oplus N}}(\CC_\infty)$.
Write $$\bz_j=\begin{pmatrix}
\bz_{j,1}\\
\vdots\\
\bz_{j,N}
\end{pmatrix},
\qquad
\bz_{j,\alpha}\in \Lie_{G_k}(\CC_\infty).$$

Define $$Y:=[\pi_k(\by_1),\ldots,\pi_k(\by_{rN})]\in \Mat_{rN}(\CC_\infty),$$ and $$Z:=[p_{k,N}(\bz_1),\ldots,p_{k,N}(\bz_{rN})]\in \Mat_{rN}(\CC_\infty).$$
Since $\by_j=g_j=h(e_{\delta+j})$, we have $$\pi_k(\by_j)=\widehat h\bigl(\pi_k(e_{\delta+j})\bigr).$$

By the definition of the coefficients $g_{ij}$, the $j$-th column of $M(\widehat h)$ is precisely the coordinate vector of $\pi_k(\by_j)$ with respect to the basis $\pi_k(e_{\delta+1}),\ldots,\pi_k(e_{\delta+rN})$ of $W_k(K_\infty)$. Hence, we have $Y=M(\widehat h)$ as $\pi_k(e_{\delta+1}),\ldots,\pi_k(e_{\delta+rN})$ is the standard $A$-basis of $W_k(A)$.

Define the permutation matrix $$\mathfrak R_{r,N}:\bigoplus_{\alpha=1}^{N}\CC_\infty^r\longrightarrow\bigoplus_{h=1}^r\CC_\infty^N$$ by $$
\mathfrak R_{r,N}\begin{pmatrix}
\bw_1\\
\vdots\\
\bw_N
\end{pmatrix}=\begin{pmatrix}
\bw[1]\\
\vdots\\
\bw[r]
\end{pmatrix},$$ where $\bw_\alpha=(w_{\alpha,1},\ldots,w_{\alpha,r})^\top$ for $1\leq \alpha\leq N$ and $\bw[h]=(w_{h,1},\ldots,w_{h,N})^\top$ for $1\leq h\leq r$.

\begin{lem}\label{lem:coor}
Let $\by\in \Lie_{\widehat{\mcE}_k}(\CC_\infty)$ and put $$\bz:=b\widetilde\Pi_k\by\in \Lie_{G_k^{\oplus N}}(\CC_\infty).$$
Then, $$\mathfrak R_{r,N}p_{k,N}(\bz)=B_{k}\pi_k(\by).$$
\end{lem}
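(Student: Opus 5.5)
The plan is to compute both sides coordinatewise, using the index set $\mcJ_k$ to track coordinates. Write $\by=(\by[\gamma_1],\ldots,\by[\gamma_{rk+1}])^\top$ with $\by[\gamma]\in\CC_\infty^N$, as in the description of $\bigoplus_{\gamma\in\mcJ_k}\CC_\infty^N$.

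First I compute $\Pi_k\by$. Since $\Pi_k$ is block diagonal with $\Pi_k[\gamma]=P_\ell$ for $\gamma=(\ell,j)$, we get $(\Pi_k\by)[\gamma]=P_\ell\,\by[\gamma]$.

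Next I apply $\mfS_k^{-1}$. By the definition of $\mfS_k$ (equivalently, Lemma \ref{lem:per} applied to the bijection $(i,\gamma)\mapsto(\gamma,i)$ from the proof of Lemma \ref{lem:perG}), $\mfS_k^{-1}$ sends a vector with blocks $\bx[\gamma]$ to the vector whose $\alpha$-th component $\bx_\alpha\in\Lie_{G_k}(\CC_\infty)$ has $\gamma$-coordinate $x_{\alpha,\gamma}=(\bx[\gamma])_\alpha$. Hence the $\gamma$-coordinate of $\bz_\alpha$ is
\[
z_{\alpha,\gamma}=b\,\bigl(P_\ell\,\by[\gamma]\bigr)_\alpha,\qquad \gamma=(\ell,j)\in\mcJ_k,\ 1\leq\alpha\leq N.
\]

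Then I identify which coordinates survive each projection. The map $p_k$ keeps the coordinates $r(k-1)+2,\ldots,rk+1$. Under $\nu_k^{-1}$ these are $\gamma=(2,k-1),\ldots,(r,k-1),(1,k)$; call them $\gamma'_1,\ldots,\gamma'_r$. Thus $\gamma'_h=(h+1,k-1)$ for $h<r$ and $\gamma'_r=(1,k)$. The corresponding matrices $P_\ell$ are $P_2,\ldots,P_r,P_1$, i.e. $P_{h+1}$ with the convention $P_{r+1}=P_1$. This is exactly why the last-coordinate bookkeeping matters.

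So $p_{k,N}(\bz)$ is the stack over $\alpha$ of $\bigl(z_{\alpha,\gamma'_1},\ldots,z_{\alpha,\gamma'_r}\bigr)^\top$. Applying $\mathfrak R_{r,N}$ regroups this by $h$: the $h$-th block is $(z_{1,\gamma'_h},\ldots,z_{N,\gamma'_h})^\top=bP_{h+1}\by[\gamma'_h]$.

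On the other side, $\pi_k(\by)$ consists of the last $r$ $N$-blocks of $\by$, namely $(\by[\gamma'_1],\ldots,\by[\gamma'_r])^\top$. Therefore $B_k\pi_k(\by)$ has $h$-th block $bP_{h+1}\by[\gamma'_h]$, and the two sides agree.

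The only delicate step is the index bookkeeping. One must check that the last $r$ positions under $\nu_k$ are $(2,k-1),\ldots,(r,k-1),(1,k)$, so that $P_1$ sits in the last block rather than the first. One must also check that the transpositions performed by $\mfS_k^{-1}$ and $\mathfrak R_{r,N}$ are mutually compatible. I would verify the latter on Example \ref{eg:Sk}, where $r=N=2$, $k=1$, the surviving coordinates are $\gamma_2=(2,0)$ and $\gamma_3=(1,1)$, and the blocks are $P_2,P_1$.
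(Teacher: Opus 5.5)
Your proof is correct and is essentially the paper's argument. The paper compresses your coordinate chase into the identity $\mathfrak R_{r,N}p_{k,N}(\bz)=q_{k,N}(\mathfrak S_k\bz)=q_{k,N}(b\Pi_k\by)$, where $q_{k,N}$ is the projection onto the last $r$ blocks of $\bigoplus_{\gamma\in\mcJ_k}\CC_\infty^N$, and then reads off the diagonal blocks $bP_2,\ldots,bP_r,bP_1$ of $\Pi_k$ at positions $(2,k-1),\ldots,(r,k-1),(1,k)$; your explicit identification of these positions and of the regrouping by $\mathfrak R_{r,N}$ is exactly the bookkeeping that identity hides.
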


\begin{proof}
Write $\by$ in $\mcJ_k$-blocks as $$\by=\begin{pmatrix}
\by[\gamma_1]\\
\vdots\\
\by[\gamma_{rk+1}]
\end{pmatrix},
\qquad
\by[\gamma_i]\in \CC_\infty^N.$$
By the definitions of $\widetilde\Pi_k$ and $\mathfrak S_k$, we have $$\mathfrak S_k\bz=\mathfrak S_k b\widetilde\Pi_k\by=b\Pi_k\by.$$
Let $$q_{k,N}:\bigoplus_{\gamma\in\mcJ_k}\CC_\infty^N\longrightarrow
\bigoplus_{h=1}^r\CC_\infty^N$$ be the projection onto the last $r$ blocks. By the definitions of $p_{k,N}$, $\mathfrak S_k$, and $\mathfrak R_{r,N}$, we have $$\mathfrak R_{r,N}p_{k,N}(\bz)=q_{k,N}(\mathfrak S_k\bz) =q_{k,N}(b\Pi_k\by).$$

By the definition of $\Pi_k$, the latter is simply $$\begin{pmatrix}
	bP_2 & & &\\
	& \ddots & &\\
	& & bP_r &\\
	&&&bP_1
\end{pmatrix}\begin{pmatrix}
\by[\gamma_{r(k-1)+2}]\\
\by[\gamma_{r(k-1)+3}]\\
\vdots\\
\by[\gamma_{rk+1}]
\end{pmatrix}.$$
Thus, $$\mathfrak R_{r,N}p_{k,N}(\bz)=B_{k}\pi_k(\by).$$
\end{proof}

\begin{lem}\label{lem:detZ}
We have $$\mathfrak R_{r,N}Z=B_{k}Y=B_{k}M(\widehat h).$$
In particular, $$\det(Z)=\det(\mathfrak R_{r,N})^{-1}\det(B_{k})\det M(\widehat h)\neq0.$$
\end{lem}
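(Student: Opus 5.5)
The plan is to apply Lemma \ref{lem:coor} column by column and then pass to determinants.

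For each $1\leq j\leq rN$, Lemma \ref{lem:coor} with $\by=\by_j$ and $\bz=\bz_j=b\widetilde\Pi_k\by_j$ gives
$$\mathfrak R_{r,N}\,p_{k,N}(\bz_j)=B_{k}\,\pi_k(\by_j).$$
The left side is the $j$-th column of $\mathfrak R_{r,N}Z$, because multiplying a matrix on the left by $\mathfrak R_{r,N}$ acts on each column separately. The right side is the $j$-th column of $B_kY$. Collecting all columns yields $\mathfrak R_{r,N}Z=B_kY$. We already established $Y=M(\widehat h)$ just before the definition of $\mathfrak R_{r,N}$, which proves the first identity.

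For the determinant formula, take determinants of both sides to get $\det(\mathfrak R_{r,N})\det Z=\det(B_k)\det M(\widehat h)$. Since $\mathfrak R_{r,N}$ is a permutation matrix, its determinant is $\pm1$, in particular invertible, and we can solve for $\det Z$.

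It remains to show that $\det Z\neq0$. Corollary \ref{cor:reg} gives $\det M(\widehat h)\neq0$. For $\det B_k$, note that $B_k=\operatorname{diag}(bP_2,\ldots,bP_r,bP_1)$ with $b\in A\setminus\{0\}$. Each $P_h=P^{(h-1)}$ is invertible: Frobenius twisting is a ring endomorphism, so $\det P^{(h-1)}=(\det P)^{q^{h-1}}$, which is nonzero because $P\in\GL_N(K^\sep)$. Hence
$$\det B_k=b^{rN}\prod_{h=1}^r(\det P)^{q^{h-1}}\neq0.$$

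The only step requiring care is checking that the column-wise statement of Lemma \ref{lem:coor} really assembles into the matrix identity. This holds because $Y$ and $Z$ are defined column-wise from the $\by_j$ and $\bz_j$, so no real obstacle arises.
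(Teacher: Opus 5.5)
Your proposal is correct and follows the paper's proof: you apply Lemma~\ref{lem:coor} to each $\by_j$ to assemble $\mathfrak R_{r,N}Z=B_kY$, use $Y=M(\widehat h)$, and take determinants using the invertibility of $\mathfrak R_{r,N}$, $B_k$ and $M(\widehat h)$. Your explicit computation $\det B_k=b^{rN}\prod_{h=1}^r(\det P)^{q^{h-1}}\neq0$ just fills in a detail the paper leaves implicit.
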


\begin{proof}
Apply Lemma \ref{lem:coor} to $\by_1,\ldots,\by_{rN}$, then the result follows. The second assertion follows because $\mathfrak R_{r,N}$, $B_{k}$, and $M(\widehat h)$ are invertible.
\end{proof}

\begin{lem}\label{lem:algpt}
For all $j$ and $\alpha$, one has $$\exp_{G_k}(\bz_{j,\alpha})\in G_k(\conj K).$$
\end{lem}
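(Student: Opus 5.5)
We will show that $\exp_{G_k^{\oplus N}}(\bz_j)$ is an algebraic point by transporting $\exp_{\widehat{\mcE}_k}(\by_j)$ through the algebraic isomorphism of Proposition \ref{prop:Gk-tran}. The claim for each $\bz_{j,\alpha}$ then follows because the exponential of a direct sum acts blockwise.

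\textbf{Step 1: the conjugated module.} Set $Q:=b\widetilde\Pi_k\in\GL_{d_k}(\conj K)$. Its entries are algebraic: $\mfS_k$ has entries in $\FF_q$, $b\in A$, and $\Pi_k$ has entries in $K^\sep$, since they are Frobenius twists of the entries of $P=(M(\vec{\mathbf{u}})^\top)^{-1}$. Proposition \ref{prop:Gk-tran} and Lemma \ref{lem:den}(1) give
$$\widehat{\mcE}_k(t)=b^{-1}\widetilde\Pi_k^{-1}G_k^{\oplus N}(t)\widetilde\Pi_k b=Q^{-1}G_k^{\oplus N}(t)Q$$
in $\Mat_{d_k}(\conj K)[\tau]$. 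Note that $b$ is a scalar, so it commutes with the constant matrices.

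\textbf{Step 2: exponentials.} We claim that
$$\exp_{G_k^{\oplus N}}=Q\circ\exp_{\widehat{\mcE}_k}\circ(\partial Q)^{-1}.$$
Here $\partial Q=Q$, since $Q$ has no $\tau$-terms. To see this, put $E(\bx):=Q\exp_{\widehat{\mcE}_k}(Q^{-1}\bx)$. Written as a $\tau$-series, $E$ has constant term $QQ^{-1}=I$. It satisfies
$$E\circ\partial G_k^{\oplus N}(t)=Q\exp_{\widehat{\mcE}_k}\bigl(Q^{-1}\partial G_k^{\oplus N}(t)Q\,Q^{-1}\bigr)=Q\exp_{\widehat{\mcE}_k}\bigl(\partial\widehat{\mcE}_k(t)\,Q^{-1}\bigr)=Q\,\widehat{\mcE}_k(t)\exp_{\widehat{\mcE}_k}Q^{-1}=G_k^{\oplus N}(t)\circ E.$$
By uniqueness of the exponential (the same argument used in Lemma \ref{lem:den}(4)), $E=\exp_{G_k^{\oplus N}}$. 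This uniqueness is the main point to check. It holds because the functional equation with constant term $I$ determines the coefficients recursively: $\partial\phi_t$ has the single eigenvalue $\theta$, and $\theta^{q^i}-\theta\neq0$ for $i\geq1$, so the Sylvester-type equations defining the coefficients are uniquely solvable.

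\textbf{Step 3: conclusion.} Since $\by_j=g_j\in U(\widehat{\mcE}_k/A)$, we have $\exp_{\widehat{\mcE}_k}(\by_j)\in\widehat{\mcE}_k(A)\subseteq A^{d_k}$. Therefore
$$\exp_{G_k^{\oplus N}}(\bz_j)=Q\exp_{\widehat{\mcE}_k}(\by_j)\in\conj K^{d_k}.$$
Finally, $G_k^{\oplus N}(t)$ is block diagonal, so by uniqueness again $\exp_{G_k^{\oplus N}}$ is block diagonal with diagonal blocks $\exp_{G_k}$. Its $\alpha$-th block component is $\exp_{G_k}(\bz_{j,\alpha})$, which therefore lies in $G_k(\conj K)$.
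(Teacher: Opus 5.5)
Your proposal is correct and follows essentially the same route as the paper: set $Q=b\widetilde\Pi_k$ and use Proposition~\ref{prop:Gk-tran} to get $\widehat{\mcE}_k(t)=Q^{-1}G_k^{\oplus N}(t)Q$. Then deduce $\exp_{G_k^{\oplus N}}=Q\exp_{\widehat{\mcE}_k}Q^{-1}$, evaluate at $g_j\in U(\widehat{\mcE}_k/A)$, and take the $\alpha$-th component; you additionally justify the uniqueness of the exponential (Sylvester equations with $\theta\neq\theta^{q^i}$) and the block-diagonal form of $\exp_{G_k^{\oplus N}}$, both of which the paper takes for granted.
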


\begin{proof}
Let $Q_k=b\widetilde\Pi_k$. From Proposition \ref{prop:Gk-tran} and the definition of $\widehat{\mcE}_k$, we have $$\widehat{\mcE}_{k}(t)=Q_k^{-1}(G_k^{\oplus N}(t))Q_k.$$ 

Thus, we have $$\exp_{\widehat{\mcE}_k} = Q_k^{-1}\exp_{G_k^{\oplus N}} Q_k.$$

Since $g_j\in U(\widehat{\mcE}_k/A)\cap\mcZ$, we have $$\exp_{\widehat{\mcE}_k}(g_j)\in \widehat{\mcE}_k(A).$$

Thus, $$\exp_{G_k^{\oplus N}}(\bz_j)=Q_k\exp_{\widehat{\mcE}_k}(g_j)
\in (G_k^{\oplus N})(\conj K).$$

Taking the $\alpha$-th component proves the assertion.
\end{proof}

\begin{prop}\label{prop:det-tran}
The determinant $\det(Z)$ is transcendental over $\conj K$.
\end{prop}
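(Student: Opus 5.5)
The plan is to reduce to Theorem \ref{thm:Gk-ai} via Lemma \ref{lem:Gk-lin}. We have $rN$ vectors $\bz_j$, each splitting into $N$ components $\bz_{j,\alpha}\in\Lie_{G_k}(\CC_\infty)$. This gives $rN\cdot N$ vectors in $\Lie_{G_k}(\CC_\infty)$, and by Lemma \ref{lem:algpt} each satisfies $\exp_{G_k}(\bz_{j,\alpha})\in G_k(\conj K)$. Among the family $\{\bz_{j,\alpha}\}$ we choose a maximal $F_k$-linearly independent subfamily $\bw_1,\ldots,\bw_m$. By Theorem \ref{thm:Gk-ai}, the $rm$ entries of $p_k(\bw_1),\ldots,p_k(\bw_m)$ are algebraically independent over $\conj K$. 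By Lemma \ref{lem:Gk-lin}, every entry of $Z$ (these are exactly the coordinates of the vectors $p_k(\bz_{j,\alpha})$) is a $\conj K$-linear form in these $rm$ variables $X_1,\ldots,X_{rm}$.

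Consequently $\det(Z)=F(X_1,\ldots,X_{rm})$ for a homogeneous polynomial $F\in\conj K[X_1,\ldots,X_{rm}]$ of degree $rN$, obtained as the determinant of a matrix of linear forms. By Lemma \ref{lem:detZ}, $\det(Z)\neq0$, so $F$ is not the zero polynomial. Suppose $\det(Z)=\lambda\in\conj K$. Then $F(X)-\lambda$ is a polynomial with $\conj K$-coefficients vanishing at the algebraically independent point $(X_1,\ldots,X_{rm})$, so $F-\lambda$ is identically zero. Since $F$ is homogeneous of positive degree $rN\geq1$, this forces $F=0$ and $\lambda=0$, contradicting $\det(Z)\neq 0$. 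Hence $\det(Z)$ is transcendental.

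The delicate point is the homogeneity argument. Lemma \ref{lem:Gk-lin} gives linear forms with no constant term, since $p_k(c\cdot\bz)=D_k(c)p_k(\bz)$ with $D_k(c)\in\Mat_r(\conj K)$. Each entry of $Z$ is therefore a homogeneous linear form, and $F$ is homogeneous of degree exactly $rN$ or zero. We must also check that $F$ is genuinely nonzero as a polynomial. This follows because its evaluation at the tractable coordinates is $\det(Z)\neq0$. No further independence bookkeeping is needed: the maximal independent subfamily may be small (for instance, when $\varphi$ has CM and $F_k$ is large), but the argument only uses that the entries are linear forms in algebraically independent quantities.
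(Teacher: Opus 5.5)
Your proof is correct and follows the paper's argument essentially step for step. You choose a maximal $F_k$-linearly independent subfamily of the $\bz_{j,\alpha}$, use Theorem \ref{thm:Gk-ai} and Lemma \ref{lem:Gk-lin} to write $Z$ as a matrix of homogeneous $\conj K$-linear forms in algebraically independent tractable coordinates, and then use that $\det$ of this matrix is a nonzero homogeneous polynomial of positive degree (nonzero by Lemma \ref{lem:detZ}) to rule out $\det(Z)\in\conj K$.
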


\begin{proof}
Put $$\mcZ_k:=\{\bz_{j,\alpha}\mid 1\leq j\leq rN,\ 1\leq \alpha\leq N\}.$$
By Lemma \ref{lem:algpt}, every element of $\mathcal Z_k$ is a logarithm of an algebraic point on $G_k$. Choose a maximal subset
$\{\bw_1,\ldots,\bw_m\}\subseteq \mcZ_k$, which is linearly independent over $F_k=\Frac(\End_{\conj K}(G_k))$. Write $$p_k(\bw_\alpha)=(w_{\alpha,1},\ldots,w_{\alpha,r})^\top.$$

By Theorem \ref{thm:Gk-ai}, all $w_{\alpha,h}$ are algebraically independent over $\conj K$.  Note that every entry of $Z$ is a coordiante of $p_k(\bz_{j,\alpha})$ for some $1\leq j\leq rN$ and $1\leqq \alpha\leq N$. By Lemma \ref{lem:Gk-lin}, it is a $\conj K$-linear form in the variables $w_{\alpha,h}$. Hence there is a matrix $\mathsf Z(X)$ whose entries are homogeneous linear forms over $\conj K$ such that $$Z=\mathsf Z(w_{\alpha,h}).$$

Set $H(X):=\det \mathsf Z(X)$. By Lemma \ref{lem:detZ}, $$H(w_{\alpha,h})=\det(Z)\neq0.$$

Thus $H$ is a non-zero homogeneous polynomial of positive degree over $\conj K$. Argue by contradiction, we assume that $\det(Z)$ is algebraic over $K$, say $\det(Z)=c\in\conj K$. Then $$H(w_{\alpha,h})-c=0$$ gives a non-trivial algebraic relation among $\{w_{\alpha,h}\}$ over $\conj K$, contradicting the algebraic independence of the $\{w_{\alpha,h}\}$ over $\conj K$.
\end{proof}

\begin{thm}\label{thm:main}
Let $\varphi$ be a Drinfeld module over $K$, and let $$\rho:G_K\longrightarrow\GL_n(\conj\FF_q)$$
be an Artin representation. Then, for every integer $k\geq0$, the special value $L(\varphi^\vee,\rho,k)$ is transcendental over $\conj K$.
\end{thm}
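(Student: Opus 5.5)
The plan is to assemble the pieces already established. The case $k=0$ is \cite[Theorem 5.2]{Ye26}, so fix $k\geq1$. By Lemma \ref{lem:den}(2), $\widehat{\mcE}_k$ is isomorphic over $K$ to $\mcE_k$ via the scalar $b$. Hence $\MM(\widehat{\mcE}_k)\cong\MM(\mcE)\otimes\MM(C)^{\otimes k}$, and the two modules have the same $L$-series. Corollary \ref{cor:Ek-L} then gives, for a finite set $S$ of bad places,
$$L_S(\widehat{\mcE}_k^\vee,0)=N_{K_\infty(\rho)/K_\infty}\bigl(L_S(\varphi^\vee,\rho,k)\bigr).$$
The full $L$-values differ from the $S$-truncated ones by finitely many local factors $Q_\mfp(\cdot,\mfp^{-k})^{-1}$. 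These are nonzero elements of $\conj K$: they are products of $(1-\alpha\beta\mfp^{-k})^{\pm1}$ with $\alpha,\beta$ algebraic, and they do not vanish for weight reasons, $|\alpha|_\infty=|\mfp|_\infty^{1/r}$, or else one includes a remark that the Euler factor is finite. So it suffices to handle the $S$-versions. Transcendence of $L(\widehat{\mcE}_k^\vee,0)$ itself follows from Corollary \ref{cor:reg} together with Lemma \ref{lem:detZ} and Proposition \ref{prop:det-tran}:
$$L(\widehat{\mcE}_k^\vee,0)=c_k\det M(\widehat h)=c_k\det(\mathfrak R_{r,N})\det(B_k)^{-1}\det(Z).$$
Here $c_k\in K^\times$, $\det\mathfrak R_{r,N}=\pm1$, and $\det B_k\in\conj K^\times$. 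Since $\det Z$ is transcendental over $\conj K$, so is $L(\widehat{\mcE}_k^\vee,0)$.

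The remaining step, which I expect to be the main obstacle, is passing from the norm back to $L(\varphi^\vee,\rho,k)$. Put $d'=[K_\infty(\rho):K_\infty]$, a constant field extension of degree dividing $d$. I will use that $L_S(\varphi^\vee,\rho,k)$ lies in $K_\infty(\rho)=\FF_{q^{d'}}(\!(1/\theta)\!)$ and that the norm is the product of its conjugates $\sigma\bigl(L_S(\varphi^\vee,\rho,k)\bigr)$ over $\sigma\in\Gal(\FF_{q^{d'}}/\FF_q)$. Here $\sigma$ acts on coefficients, so it extends to an automorphism of $\CC_\infty$ that preserves $\conj K$. Concretely, $\sigma$ applied to $L_S(\varphi^\vee,\rho,k)$ equals $L_S(\varphi^\vee,\rho^{\sigma},k)$, since $\sigma$ acts on the $\beta_j$ in the local factors. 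Now suppose $L(\varphi^\vee,\rho,k)$ were algebraic over $\conj K$. Then so is $L_S(\varphi^\vee,\rho,k)$, and each conjugate $\sigma(L_S)$ is algebraic, because $\sigma$ maps $\conj K$ to itself. Their product, the norm, is then algebraic, and so $L_S(\widehat{\mcE}_k^\vee,0)$ is algebraic, a contradiction.

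To make the conjugation step rigorous, I would check that the continuous automorphism of $\FF_{q^{d'}}(\!(1/\theta)\!)$ induced by $\sigma$ on constants extends to $\conj K_\infty$ and to $\CC_\infty$ by continuity, and sends $\conj K$ to $\conj K$. This holds because it fixes $K$ and preserves algebraicity. This concludes the proof for all $k\geq0$.
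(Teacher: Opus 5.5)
Your proposal is correct and follows the paper's own proof: the case $k=0$ is quoted from \cite[Theorem 5.2]{Ye26}; transcendence of $L(\widehat{\mcE}_k^\vee,0)$ comes from Corollary \ref{cor:reg}, Lemma \ref{lem:detZ} and Proposition \ref{prop:det-tran}; and Corollary \ref{cor:Ek-L}, together with the algebraicity of finitely many local factors, transfers this to $L(\varphi^\vee,\rho,k)$. Your conjugate argument for the norm step makes explicit what the paper asserts in one line, and it is easier than you expect: any $K_\infty$-automorphism of $K_\infty(\rho)$ fixes $K$, so it sends an element algebraic over $K$ to another root of the same minimal polynomial, with no need to extend to $\CC_\infty$ (also, on the dual Tate module the Frobenius eigenvalues have absolute value $|\mfp|_\infty^{-1/r}$, not $|\mfp|_\infty^{1/r}$).
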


\begin{proof}
For $k=0$, this follows from \cite[Theorem 5.2]{Ye26}. Now suppose $k\geq1$. By Lemma \ref{cor:reg} and Lemma \ref{lem:detZ}, $$L(\widehat{\mcE}_k^\vee,0)=c_k\det M(\widehat h)=c_k\det(B_{k})^{-1}\det(\mfR_{r,N})\det(Z)$$
for some $c_k\in K^\times$. Since $\det(B_{k})$ and $\det(\mfR_{r,N})$ are both algebraic over $K$ by the definition of $B_k$ and $\mfR_{r,N}$, Proposition \ref{prop:det-tran} implies that $L(\widehat{\mcE}_k^\vee,0)$ is transcendental over $\conj K$. Since $\widehat{\mcE}_k$ and $\mcE_k$ are isomorphic over $K$, we see that $L(\mcE_k^\vee,0)$ is transcendental over $\conj K$.

For a finite set of places $S$ containing the bad places, Corollary \ref{cor:Ek-L} gives
$$L_S(\mcE_k^\vee,0)=
N_{K_\infty(\rho)/K_\infty}\bigl(L_S(\varphi^\vee,\rho,k)\bigr).$$
Note that the local factors at each prime in $S$ is non-zero algebraic. If $L(\varphi^\vee,\rho,k)$ were algebraic over $\conj K$, then the right hand side would also be algebraic over $\conj K$, and hence so would $L(\mcE_k^\vee,0)$, a contradiction.
\end{proof}

\begin{cor}\label{cor:car}
Let $\rho:G_K\to \GL_n(\conj\FF_q)$ be an Artin representation. Then, for
every integer $k\geq1$, the special $L$-value $L(\rho,k)$ is transcendental
over $\conj K$.
\end{cor}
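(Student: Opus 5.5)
The plan is to reduce Corollary \ref{cor:car} to Theorem \ref{thm:main} applied to the Carlitz module $\varphi=C$, using Lemma \ref{lem:carL}. First observe that $C_t=\theta+\tau$ is a Drinfeld module over $K$ of rank $1$, so it satisfies the hypotheses of Theorem \ref{thm:main}. Fix $k\geq1$ and set $k'=k-1\geq0$. Theorem \ref{thm:main} then gives that $L(C^\vee,\rho,k-1)$ is transcendental over $\conj K$.

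Next, compare $L(C^\vee,\rho,k-1)$ with $L(\rho,k)$. Let $S$ be the finite set of primes ramified in $\rho$. The Carlitz module has everywhere good reduction at finite places, so Lemma \ref{lem:carL} gives $L_S(C^\vee,\rho,k-1)=L_S(\rho,k)$. Both complete values differ from their $S$-truncated versions by the finitely many Euler factors at $\mfp\in S$. These are
\[
Q^\vee_\mfp(C,\rho,\mfp^{-(k-1)})^{-1}\quad\text{and}\quad Q_\mfp(\rho,\mfp^{-k})^{-1}.
\]
Each is the inverse of a polynomial with coefficients in $\conj{\FF}_q(\theta)$, or more precisely in $K$ adjoined the eigenvalues of Frobenius on the relevant inertia invariants, evaluated at a rational function. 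So each factor lies in $\conj K$.

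The one point needing care is that these factors are nonzero, so that we may divide by them. For $Q_\mfp(\rho,\mfp^{-k})$, the eigenvalues of $\rho(\Frob_\mfp)$ on $V_\rho^{I_\mfp}$ are roots of unity in $\conj\FF_q$. Meanwhile $\mfp^{-k}$ has positive $\infty$-adic valuation, so each factor $1-\beta\mfp^{-k}$ has absolute value $1$. The same reasoning applies to the twisted Carlitz factor: its eigenvalues on inertia invariants are of the form $\mfp^{-1}\beta$, or the factor is empty. Hence both products are nonzero algebraic numbers.

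Therefore $L(\rho,k)=c\cdot L(C^\vee,\rho,k-1)$ for some $c\in\conj K^\times$. Since $L(C^\vee,\rho,k-1)$ is transcendental over $\conj K$, so is $L(\rho,k)$. I expect no real obstacle; the only subtlety is bookkeeping of the local factors at the ramified primes, which is handled as above.
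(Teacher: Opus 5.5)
Your proposal is correct and follows the paper's proof. Both specialize Theorem \ref{thm:main} to $\varphi=C$ at $k-1$, then use Lemma \ref{lem:carL} to pass to $L(\rho,k)$ up to finitely many nonzero algebraic Euler factors. Your check that the factors $1-\beta\mfp^{-k}$ at the primes in $S$ have $\infty$-adic absolute value $1$, and hence are nonzero, spells out a detail the paper only asserts.
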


\begin{proof}
Take $\varphi=C$, the Carlitz module, in Theorem \ref{thm:main}. Then $L(C^\vee,\rho,k-1)$ is transcendental over $\conj K$ for every $k\geq1$. By Lemma \ref{lem:carL}, $L(C^\vee,\rho,k-1)$ and $L(\rho,k)$ differ at most by a finite product of non-zero algebraic local factors. Hence $L(\rho,k)$ is also transcendental over $\conj K$.
\end{proof}

\subsection{Further directions over finite extensions and obstructions}
It is natural to ask whether Conjecture \ref{cjt:main} remains true when
$E/K$ is a non-trivial finite separable extension, $\varphi$ is a
Drinfeld module over $E$, and $\rho$ is an Artin representation of
$G_E$. The proof in this paper suggests a possible strategy. It also
shows where a new input is needed.

First, the embeddings of $E$ over $K$ change the shape of the problem. When we embed $E$ into $\CC_\infty$ and compute the logarithms, the object attached to $\varphi$ decomposes according to the $K$-embeddings $\sigma:E\hookrightarrow \CC_\infty$. Its pieces are attached to the conjugate Drinfeld modules $\varphi^\sigma$. After tensoring with $C^{\otimes k}$, one expects the corresponding $t$-module to decompose as a direct sum of $t$-modules of the form $G_{\varphi^\sigma,k}$, with multiplicities determined by the Artin representation. Thus one should be able to choose transition matrices for the different conjugates and put them together into one block matrix. In this way, one should still get an analogue of Lemma \ref{lem:detZ}. More precisely, after a suitable permutation of coordinates, the matrix $Z_E$ containing the relevant tractable coordinates should satisfy an identity of the form $$\mfR_E Z_E=B_E M(\widehat h_E),$$ where $\mfR_E$ and $B_E$ are an block matrices with algebraic entries and $M(\widehat h_E)$ is the matrix coming from the regulator. Hence one would also get an identity of the form $$\det(Z_E)=\det(\mathfrak R_E)^{-1}\det(B_E)\det M(\widehat h_E).$$
So the regulator can still be reduced to a determinant whose entries are
algebraic linear forms in tractable coordinates of logarithms.

The real difficulty is the last step of the transcendence argument. When $E=K$, all entries of $Z$ come from logarithms on the single $t$-module $G_{\varphi,k}$, and Theorem \ref{thm:Gk-ai} applies. For a general extension $E/K$, the entries of $Z_E$ come from logarithms on several different $t$-modules $G_{\varphi^{\sigma_1},k},\ldots,G_{\varphi^{\sigma_s},k}$. Equivalently, before tensoring with the Carlitz power, they come from logarithms attached to several conjugate Drinfeld modules $\varphi^{\sigma_i}$. These Drinfeld modules may be non-isogenous. This is an essential obstruction. The algebraic independence theorems of Chang-Papanikolas (resp. Gezmi{\c s}-Namoijam) only control logarithms on one Drinfeld module $\varphi$ (resp. $t$-module $G_{\varphi,k}$). It does not tell us anything about the algebraic relations among logarithms when they are coming from several different non-isogenous Drinfeld modules or $t$-modules.

Thus a proof over a general finite extension would require a product version of the algebraic independence theorem, for logarithms coming from several Drinfeld modules or several associated $t$-modules at the same time. More precisely, one would expect a result for products $G_{\varphi_1,k}^{\oplus n_1}\oplus\cdots\oplus G_{\varphi_s,k}^{\oplus n_s}$, where $\varphi_1,\ldots,\varphi_s$ are pairwise non-isogenous Drinfeld modules. Under the expected linear independence assumptions over the endomorphism rings of these factors, the theorem should imply algebraic independence of all tractable coordinates of the relevant logarithms. Therefore, the next step is to study several non-isogenous Drinfeld modules, or the specific $t$-modules $G_{\varphi_i,k}$, at the same time, and to describe all algebraic relations among their periods and logarithms. This is the main missing part for extending the present method from $K$ to finite separable extensions of $K$.

\bibliographystyle{alpha}
\bibliography{Bibliography/FField}

@article {anderson1986t,
    AUTHOR = {Anderson, Greg W.},
     TITLE = {{$t$}-motives},
   JOURNAL = {Duke Math. J.},
  FJOURNAL = {Duke Mathematical Journal},
    VOLUME = {53},
      YEAR = {1986},
    NUMBER = {2},
     PAGES = {457--502},
      ISSN = {0012-7094,1547-7398},
   MRCLASS = {11F67 (11G05 11R58 14K05)},
  MRNUMBER = {850546},
MRREVIEWER = {David\ Goss},
       DOI = {10.1215/S0012-7094-86-05328-7},
       URL = {https://doi.org/10.1215/S0012-7094-86-05328-7},
}

@article {angles2022class,
    AUTHOR = {Angl\`es, Bruno and Ngo Dac, Tuan and Tavares Ribeiro, Floric},
     TITLE = {A class formula for admissible {A}nderson modules},
   JOURNAL = {Invent. Math.},
  FJOURNAL = {Inventiones Mathematicae},
    VOLUME = {229},
      YEAR = {2022},
    NUMBER = {2},
     PAGES = {563--606},
      ISSN = {0020-9910,1432-1297},
   MRCLASS = {11G09 (11M38 11R58)},
  MRNUMBER = {4448991},
MRREVIEWER = {David\ Tweedle},
       DOI = {10.1007/s00222-022-01110-3},
       URL = {https://doi.org/10.1007/s00222-022-01110-3},
}

@article {fang2015special,
    AUTHOR = {Fang, Jiangxue},
     TITLE = {Special {$L$}-values of abelian {$t$}-modules},
   JOURNAL = {J. Number Theory},
  FJOURNAL = {Journal of Number Theory},
    VOLUME = {147},
      YEAR = {2015},
     PAGES = {300--325},
      ISSN = {0022-314X,1096-1658},
   MRCLASS = {11G09 (11R58 13C99 14G10)},
  MRNUMBER = {3276327},
MRREVIEWER = {Liang-Chung\ Hsia},
       DOI = {10.1016/j.jnt.2014.07.012},
       URL = {https://doi.org/10.1016/j.jnt.2014.07.012},
}

@book {Goss,
    AUTHOR = {Goss, David},
     TITLE = {Basic structures of function field arithmetic},
    SERIES = {Ergebnisse der Mathematik und ihrer Grenzgebiete (3) [Results
              in Mathematics and Related Areas (3)]},
    VOLUME = {35},
 PUBLISHER = {Springer-Verlag, Berlin},
      YEAR = {1996},
     PAGES = {xiv+422},
      ISBN = {3-540-61087-1},
   MRCLASS = {11G09 (11L05 11R58)},
  MRNUMBER = {1423131},
MRREVIEWER = {Jeremy\ T.\ Teitelbaum},
       DOI = {10.1007/978-3-642-61480-4},
       URL = {https://doi.org/10.1007/978-3-642-61480-4},
}

@article {hartl2017isogenies,
    AUTHOR = {Hartl, Urs},
     TITLE = {Isogenies of {A}belian {A}nderson {$A$}-modules and
              {$A$}-motives},
   JOURNAL = {Ann. Sc. Norm. Super. Pisa Cl. Sci. (5)},
  FJOURNAL = {Annali della Scuola Normale Superiore di Pisa. Classe di
              Scienze. Serie V},
    VOLUME = {19},
      YEAR = {2019},
    NUMBER = {4},
     PAGES = {1429--1470},
      ISSN = {0391-173X,2036-2145},
   MRCLASS = {11G09 (13A35 14K02 14L05)},
  MRNUMBER = {4050202},
MRREVIEWER = {Liang-Chung\ Hsia},
}

@incollection {hartl2020pinks,
    AUTHOR = {Hartl, Urs and Juschka, Ann-Kristin},
     TITLE = {Pink's theory of {H}odge structures and the {H}odge conjecture
              over function fields},
 BOOKTITLE = {{$t$}-motives: {H}odge structures, transcendence and other
              motivic aspects},
    SERIES = {EMS Ser. Congr. Rep.},
     PAGES = {31--182},
 PUBLISHER = {EMS Publ. House, Berlin},
      YEAR = {[2020] \copyright 2020},
      ISBN = {978-3-03719-198-9},
   MRCLASS = {11G09 (14D07)},
  MRNUMBER = {4321965},
}

@article {taelman2009special,
    AUTHOR = {Taelman, Lenny},
     TITLE = {Special {$L$}-values of {$t$}-motives: a conjecture},
   JOURNAL = {Int. Math. Res. Not. IMRN},
  FJOURNAL = {International Mathematics Research Notices. IMRN},
      YEAR = {2009},
    NUMBER = {16},
     PAGES = {2957--2977},
      ISSN = {1073-7928,1687-0247},
   MRCLASS = {11M38 (11G09)},
  MRNUMBER = {2533793},
MRREVIEWER = {Mihran\ Papikian},
       DOI = {10.1093/imrn/rnp038},
       URL = {https://doi.org/10.1093/imrn/rnp038},
}

@article {taelman2012special,
    AUTHOR = {Taelman, Lenny},
     TITLE = {Special {$L$}-values of {D}rinfeld modules},
   JOURNAL = {Ann. of Math. (2)},
  FJOURNAL = {Annals of Mathematics. Second Series},
    VOLUME = {175},
      YEAR = {2012},
    NUMBER = {1},
     PAGES = {369--391},
      ISSN = {0003-486X,1939-8980},
   MRCLASS = {11M38 (11G09)},
  MRNUMBER = {2874646},
MRREVIEWER = {Mihran\ Papikian},
       DOI = {10.4007/annals.2012.175.1.10},
       URL = {https://doi.org/10.4007/annals.2012.175.1.10},
}

@misc{Ye26,
      title={Artin twists of {D}rinfeld modules and {G}oss {L}-series}, 
      author={Jing Ye},
      year={2026},
      eprint={2602.04211},
      archivePrefix={arXiv},
      primaryClass={math.NT},
      note={arXiv: \url{https://arxiv.org/abs/2602.04211v2}},
}

@article {Pap08,
    AUTHOR = {Papanikolas, Matthew A.},
     TITLE = {Tannakian duality for {A}nderson-{D}rinfeld motives and
              algebraic independence of {C}arlitz logarithms},
   JOURNAL = {Invent. Math.},
  FJOURNAL = {Inventiones Mathematicae},
    VOLUME = {171},
      YEAR = {2008},
    NUMBER = {1},
     PAGES = {123--174},
      ISSN = {0020-9910,1432-1297},
   MRCLASS = {11J93 (11G09 12H10 14L17)},
  MRNUMBER = {2358057},
MRREVIEWER = {Liang-Chung\ Hsia},
       DOI = {10.1007/s00222-007-0073-y},
       URL = {https://doi.org/10.1007/s00222-007-0073-y},
}

@article {CP12,
    AUTHOR = {Chang, Chieh-Yu and Papanikolas, Matthew A.},
     TITLE = {Algebraic independence of periods and logarithms of {D}rinfeld
              modules},
      NOTE = {With an appendix by Brian Conrad},
   JOURNAL = {J. Amer. Math. Soc.},
  FJOURNAL = {Journal of the American Mathematical Society},
    VOLUME = {25},
      YEAR = {2012},
    NUMBER = {1},
     PAGES = {123--150},
      ISSN = {0894-0347,1088-6834},
   MRCLASS = {11J93 (11G09 11J89)},
  MRNUMBER = {2833480},
MRREVIEWER = {R.\ Wallisser},
       DOI = {10.1090/S0894-0347-2011-00714-5},
       URL = {https://doi.org/10.1090/S0894-0347-2011-00714-5},
}

@article {CM21,
    AUTHOR = {Chang, Chieh-Yu and Mishiba, Yoshinori},
     TITLE = {On a conjecture of {F}urusho over function fields},
   JOURNAL = {Invent. Math.},
  FJOURNAL = {Inventiones Mathematicae},
    VOLUME = {223},
      YEAR = {2021},
    NUMBER = {1},
     PAGES = {49--102},
      ISSN = {0020-9910,1432-1297},
       DOI = {10.1007/s00222-020-00988-1},
       URL = {https://doi.org/10.1007/s00222-020-00988-1},
}

@article {AT90,
    AUTHOR = {Anderson, Greg W. and Thakur, Dinesh S.},
     TITLE = {Tensor powers of the {C}arlitz module and zeta values},
   JOURNAL = {Ann. of Math. (2)},
  FJOURNAL = {Annals of Mathematics. Second Series},
    VOLUME = {132},
      YEAR = {1990},
    NUMBER = {1},
     PAGES = {159--191},
       DOI = {10.2307/1971503},
       URL = {https://doi.org/10.2307/1971503},
}

@article {Yu91,
    AUTHOR = {Yu, Jing},
     TITLE = {Transcendence and special zeta values in characteristic
              {$p$}},
   JOURNAL = {Ann. of Math. (2)},
  FJOURNAL = {Annals of Mathematics. Second Series},
    VOLUME = {134},
      YEAR = {1991},
    NUMBER = {1},
     PAGES = {1--23},
       DOI = {10.2307/2944331},
       URL = {https://doi.org/10.2307/2944331},
}

@article {GN24,
	AUTHOR = {Gezmi{\c{s}}, O{\u{g}}uz and Namoijam, Changningphaabi},
	TITLE = {On the transcendence of special values of {G}oss
	{$L$}-functions attached to {D}rinfeld modules},
	JOURNAL = {Int. J. Number Theory},
	FJOURNAL = {International Journal of Number Theory},
	VOLUME = {22},
	YEAR = {2026},
	NUMBER = {5},
	PAGES = {925--952},
	ISSN = {1793-0421,1793-7310},
	MRCLASS = {11G09 (11J93 11M38)},
	MRNUMBER = {5061511},
	DOI = {10.1142/S179304212650051X},
	URL = {https://doi.org/10.1142/S179304212650051X},
}

@article {GN25,
	AUTHOR = {Gezmi{\c{s}}, O{\u{g}}uz and Namoijam, Changningphaabi},
	TITLE = {On the algebraic independence of logarithms of {A}nderson
	{$t$}-modules},
	JOURNAL = {Kyushu J. Math.},
	FJOURNAL = {Kyushu Journal of Mathematics},
	VOLUME = {80},
	YEAR = {2026},
	NUMBER = {1},
	PAGES = {65--114},
	ISSN = {1340-6116,1883-2032},
	MRCLASS = {11J93 (11G09)},
	MRNUMBER = {5084608},
}

@article{ANDTR20,
	title = {On special {L}-values of $t$-modules},
	journal = {Advances in Mathematics},
	volume = {372},
	pages = {107313},
	year = {2020},
	issn = {0001-8708},
	doi = {https://doi.org/10.1016/j.aim.2020.107313},
	url = {https://www.sciencedirect.com/science/article/pii/S000187082030339X},
	author = {Angl\`es, Bruno and Ngo Dac, Tuan and Tavares Ribeiro, Floric}
}

@article {CCM22,
	AUTHOR = {Chang, Chieh-Yu and Chen, Yen-Tsung and Mishiba, Yoshinori},
	TITLE = {Algebra structure of multiple zeta values in positive
	characteristic},
	JOURNAL = {Camb. J. Math.},
	FJOURNAL = {Cambridge Journal of Mathematics},
	VOLUME = {10},
	YEAR = {2022},
	NUMBER = {4},
	PAGES = {743--783},
	ISSN = {2168-0930,2168-0949},
	MRCLASS = {11R59 (11J93 11M32)},
	MRNUMBER = {4524827},
	MRREVIEWER = {Vincent\ Bosser},
	DOI = {10.4310/cjm.2022.v10.n4.a1},
	URL = {https://doi.org/10.4310/cjm.2022.v10.n4.a1},
}

\end{document}